\newcommand*{\rva}[3][]{%
	\@ifmtarg{#1}{
		#2_{#3}
	}{%
		#2_{#3,#1}
	}%
}
\let\orgdescriptionlabel\descriptionlabel
\renewcommand*{\descriptionlabel}[1]{%
  \let\orglabel\label
  \let\label\@gobble
  \phantomsection
  \edef\@currentlabel{#1}%
  \let\label\orglabel
  \orgdescriptionlabel{#1}%
}
\theoremstyle{plain}
\newtheorem{thm}{Theorem}
\newtheorem{lem}[thm]{Lemma}
\newcounter{assumption}
\theoremstyle{definition}
\newtheorem{rem}[thm]{Remark}
\newcommand*{\Yinf}{Y^<}
\newcommand*{\Tinf}{T^<}
\tikzset{
        hatch distance/.store in=\hatchdistance,
        hatch distance=5pt,
        hatch thickness/.store in=\hatchthickness,
        hatch thickness=5pt
        }
\pgfqpoint{\hatchdistance}{\hatchdistance}}
\pgfqpoint{\hatchdistance}{\hatchdistance}}
\begin{document}
\title{Large deviation results for triangular arrays of semiexponential random variables}
\author[1]{Thierry Klein}
\author[2]{Agn\`es Lagnoux}
\author[3]{Pierre Petit}

\affil[1]{Institut de Math\'ematiques de Toulouse; UMR5219. Universit\'e de Toulouse; ENAC - Ecole Nationale de l'Aviation Civile , Universit\'e de Toulouse, France}
\affil[2]{Institut de Math\'ematiques de Toulouse; UMR5219. Universit\'e de Toulouse; CNRS. UT2J, F-31058 Toulouse, France.}
\affil[3]{Institut de Math\'ematiques de Toulouse; UMR5219. Universit\'e de Toulouse; CNRS. UT3, F-31062 Toulouse, France.}

\maketitle

\begin{abstract}
Asymptotics deviation probabilities of the sum $S_n=X_1+\dots+X_n$ of independent and identically distributed real-valued random variables have been extensively investigated, in particular when $X_1$ is not exponentially integrable. For instance, A.V.\ Nagaev formulated exact asymptotics results for $\Prob(S_n>x_n)$ when $X_1$ has a semiexponential distribution (\emph{see}, \cite{Nagaev69-1,Nagaev69-2}). In the same setting,  the authors of \cite{FATP2020} derived deviation results at logarithmic scale with shorter proofs relying on classical tools of large deviation theory and expliciting the rate function at the transition. In this paper, we exhibit the same asymptotic behaviour for triangular arrays of semiexponentially distributed random variables, no more supposed absolutely continuous. 
\end{abstract}

\textbf{Key words}:  large deviations, triangular arrays, semiexponential distribution, Weibull-like distribution,  Gärtner-Ellis theorem, contraction principle, truncated random variable.

\textbf{AMS subject classification}: 60F10, 60G50.



\section{Introduction}

Moderate and large deviations of the sum of independent and identically distributed (i.i.d.) real-valued random variables have been investigated since the beginning of the 20th century. 
Kinchin \cite{Kinchin29} in 1929 was the first to give a result on large deviations of the sum of i.i.d.\ Bernoulli distributed random variables. In 1933, Smirnov  \cite{Smirnov33} improved this result and in 1938 Cramér \cite{Cramer38} gave a generalization to sums of i.i.d.\ random variables satisfying the eponymous Cramér's condition which requires the Laplace transform of the common distribution of the random variables to be finite in a neighborhood of zero. Cramér's result was extended by Feller \cite{Feller43} to sequences of non identically distributed bounded random variables.
A strengthening of Feller's result was given by Petrov in \cite{Petrov54,petrov2008large} for non identically distributed random variables. 
When Cramér's condition does not hold, an early result is due to
Linnik \cite{Linnik61} in 1961 and concerns polynomial-tailed random variables. The case where the tail decreases faster than all power functions (but not enough for Cramér's condition to be satisfied) has been considered by Petrov \cite{Petrov54} and by S.V.\ Nagaev \cite{Nagaev62}.  In \cite{Nagaev69-1,Nagaev69-2}, A.V.\ Nagaev  studied the case where the commom distribution of the i.i.d.\ random variables is absolutely continuous with respect to the Lebesgue measure with density  $p(t)\sim e^{-\abs{t}^{1-\epsilon}}$ as $\abs{t}$ tends to infinity, with $\epsilon \in (0,1)$. He distinguished five exact-asymptotics results corresponding to five types of deviation speeds. In \cite{borovkov2000large, borovkov2008asymptotic}, Borovkov investigated exact asymptotics of the deviations probability for random variables with semiexponential distribution, also called Weibull-like distribution, i.e.\ with a tail writing as $e^{-t^{1-\epsilon}L(t)}$, where $\epsilon \in (0,1)$ and $L$ is a suitably slowly varying function at infinity. In \cite{FATP2020}, the authors consider the following setting. Let $\epsilon \in (0,1)$ and let $X$ be a real-valued random variable with a density $p$ with respect to the Lebesgue measure verifying:
\begin{equation}
\label{eq:behav_X}
p(x)\sim e^{-x^{1-\epsilon}},\quad \mathrm{as} \quad x\to +\infty.
\end{equation}
and
\begin{equation}
\label{hyp1}
\exists \gamma \in \intervalleof{0}{1} \quad \rho \defeq \Espe[|X|^{2+\gamma}] < \infty .
\end{equation}
For all $n \in \N^*$, let $X_1$, $X_2$, ..., $X_n$ be i.i.d.\ copies of $X$ and set $S_n=X_1+\dots+X_n$ and $P_n(x)=\Prob(S_n\geqslant x)$. According to the asymptotics of $x_n$, three logarithmic asymptotic ranges then appear. In the sequel, the notation $x_n \gg y_n$ (resp. $x_n \ll y_n$, $x_n \preccurlyeq y_n$, and $x_n =\Theta(y_n)$) means that $y_n/x_n\to 0$ (resp. $x_n/y_n\to 0$, $\limsup \abs{x_n/y_n}<\infty$, and $y_n \preccurlyeq x_n \preccurlyeq y_n$) as $n\to \infty$.
  
\begin{description}
\item[Maximal jump range] \cite[Theorem 1]{FATP2020} When $x_n \gg n^{1/(1+\epsilon)}$, 
\[
\log P_n(x_n)\sim \log\Prob(\max(X_1,\ldots,X_n)\geqslant x_n).
\]
\item[Gaussian range] \cite[Theorem 2]{FATP2020} When $x_n \ll n^{1/(1+\epsilon)}$, 
\[
\log P_n(x_n)\sim \log(1-\phi(n^{-1/2} x_n)),
\] 
$\phi$ being the cumulative distribution function of the standard Gaussian law. 
\item[Transition] \cite[Theorem 3]{FATP2020} The case $x_n = \Theta(n^{1/(1+\epsilon)})$ appears to be an interpolation between the Gaussian range and the maximal jump one.
\end{description}

In the present paper, we exhibit the same asymptotic behaviour for triangular arrays of random variables $(\rva[i]{Y}{n})_{1 \leqslant i \leqslant N_n}$ satisfying the following weaker assumption: there exists $q>0$ such that, if $y_n \to \infty$, 
\begin{align}\label{ass:queue}
\log\Prob(\rva{Y}{n} \geqslant y_n) \sim -q y_n^{1-\varepsilon} ,
\end{align}
together with similar assumptions on the moments.


\medskip

The first main contribution of this paper is the generalization of \cite{FATP2020,Nagaev69-1,Nagaev69-2} to triangular arrays. Such a setting appears naturally in some combinatorial problems, such as those presented by \cite{Janson01a}, including hashing with linear probing. Since the eighty's, laws of large numbers have been established for triangular arrays (see, e.g., \cite{gut1992complete,GUT199249,hu1989strong}). Lindeberg's condition  is standard for the central limit theorem to hold for triangular arrays (see, e.g., \cite[Theorem 27.2]{billingsley2013convergence}). Dealing with triangular arrays of light-tailed random variables, Gärtner-Ellis theorem provides moderate and large deviation results. Deviations for sums of heavy-tailed i.i.d.\ random variables are studied by several authors (e.g., \cite{borovkov2000large, borovkov2008asymptotic, FATP2020,  Linnik61, Nagaev69-1, Nagaev69-2, Nagaev79,Petrov54}) and a good survey can be found in \cite{Mikosch98}.  Here, we focus on the particular case of semiexponential tails (treated in \cite{borovkov2000large, borovkov2008asymptotic,FATP2020, Nagaev69-1,Nagaev69-2} for sums of i.i.d.\ random variables) generalizing the results to triangular arrays. See \cite{ATP2020hashing} for an application to hashing with linear probing.

Another contribution is the fact that the random variables are not supposed absolutely continuous as in \cite{FATP2020,Nagaev69-1,Nagaev69-2}. Assumption \eqref{ass:queue} is analogue to that of \cite{borovkov2000large, borovkov2008asymptotic}, but there the transition at $x_n = \Theta(n^{1/(1+\epsilon)})$ is not considered. Hence, up to our knowledge, Theorem \ref{thm:nagaev_weak_array_mob_eps_intermediate} is the first large deviation result at the transition which is explicit. 


The paper is organized as follows. In Section \ref{sec:main}, we state the main results, the proofs of which can be found in Section \ref{sec:proofs}. In Section \ref{sec:assump}, a discussion on the assumptions is proposed.
Section \ref{sec:baby} is devoted to the study of the model of a truncated random variable which is a natural model of  triangular array. This kind of model appears in many proofs of large deviations. Indeed, when one wants to deal with a random variable, the Laplace transform of which is not finite, a classical approach consists in truncating the random variable and in letting the truncation going to infinity. In this model, we exhibit various rate functions, especially nonconvex rate functions.

\section{Main results} \label{sec:main}

For all $n \geqslant 1$, let $\rva{Y}{n}$ be a centered real-valued random variable, let $N_n$ be a natural number, and let $\left(\rva[i]{Y}{n}\right)_{1 \leqslant i \leqslant N_n}$ be a family of i.i.d.\ random variables distributed as $\rva{Y}{n}$. Define, for all $k \in \intervallentff{1}{N_n}$,
\[
\rva[k]{T}{n}\defeq\sum_{i=1}^{k} \rva[i]{Y}{n}.
\]
To lighten notation, let $\rva{T}{n} \defeq \rva[N_n]{T}{n}$.


\begin{thm}[Maximal jump range] \label{thm:nagaev_weak_array_mob_eps}
Let $\varepsilon \in \intervalleoo{0}{1}$, $q > 0$, and $\alpha >(1+\varepsilon)^{-1}$. Assume that:
\begin{description}
\item[(H1)\label{hyp:tails}] 
for all $N_n^{\alpha \varepsilon} \preccurlyeq y_n \preccurlyeq N_n^\alpha$, $\log \Prob(\rva{Y}{n} \geqslant y_n) \sim -q y_n^{1-\varepsilon}$;
\item[(H2)\label{hyp:mt2_weak_array_mob_v2}] 
$\Espe[\rva{Y}{n}^2] = o(N_n^{\alpha(1+\varepsilon)-1})$.
\end{description}
Then, for all $y \geqslant 0$,
\begin{align*}
\lim_{n \to \infty} \frac{1}{N_n^{\alpha(1-\varepsilon)}} \log \Prob(\rva{T}{n} \geqslant N_n^{\alpha} y) = - q y^{1-\varepsilon} .
\end{align*}
\end{thm}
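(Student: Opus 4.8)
The statement is the sharp large-deviation form of the \emph{single-big-jump principle}: in this range $\rva{T}{n}$ reaches the level $N_n^\alpha y$ essentially because one of the $N_n$ summands alone does, so that heuristically $\Prob(\rva{T}{n}\ge N_n^\alpha y)\approx N_n\,\Prob(\rva{Y}{n}\ge N_n^\alpha y)$, whose logarithm is $\sim -qN_n^{\alpha(1-\varepsilon)}y^{1-\varepsilon}$ by (H1), since $\log N_n = o(N_n^{\alpha(1-\varepsilon)})$. The plan is to prove matching lower and upper bounds for $N_n^{-\alpha(1-\varepsilon)}\log\Prob(\rva{T}{n}\ge N_n^\alpha y)$; the case $y=0$ is immediate (the probability is $\le 1$ and, being a centered sum, bounded below by a constant via Chebyshev), so assume $y>0$.

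For the \emph{lower bound} I would realise one big jump explicitly. Choose a margin $t_n$ with $\sqrt{N_n\,\Espe[\rva{Y}{n}^2]}\ll t_n\ll N_n^\alpha$, which is possible by (H2) since $\varepsilon<1$. By independence,
\[
\Prob(\rva{T}{n}\ge N_n^\alpha y)\ \ge\ \Prob\!\left(\rva[1]{Y}{n}\ge N_n^\alpha y+t_n\right)\,\Prob\!\left(\sum_{i=2}^{N_n}\rva[i]{Y}{n}\ge -t_n\right).
\]
The second factor exceeds $1/2$ for $n$ large by Chebyshev (the sum is centered and $t_n$ dominates its standard deviation), contributing $O(1)$ to the logarithm; the first factor, by (H1) applied at $y_n=N_n^\alpha y+t_n=(1+o(1))N_n^\alpha y=\Theta(N_n^\alpha)$, which lies in the admissible range, has logarithm $\sim-q(N_n^\alpha y)^{1-\varepsilon}$. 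Dividing by $N_n^{\alpha(1-\varepsilon)}$ yields $\liminf\ge-qy^{1-\varepsilon}$.

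The \emph{upper bound} is the crux. I would truncate at the \emph{low} level $c_n=N_n^{\alpha\varepsilon}$, the lower end of the range in (H1), and write $\rva{T}{n}=B_n+J_n$ with $B_n=\sum_i \rva[i]{Y}{n}\indic_{\rva[i]{Y}{n}\le c_n}$ and $J_n=\sum_i \rva[i]{Y}{n}\indic_{\rva[i]{Y}{n}>c_n}$. For $\eta\in(0,y)$ the event $\{\rva{T}{n}\ge N_n^\alpha y\}$ forces $B_n\ge\eta N_n^\alpha$ or $J_n\ge(y-\eta)N_n^\alpha$. For $B_n$ I would apply an exponential Markov inequality with $\lambda=\beta/c_n=\beta N_n^{-\alpha\varepsilon}$. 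Each summand of $B_n$ is $\le c_n$ and has nonpositive mean ($\Espe[\rva{Y}{n}\indic_{\rva{Y}{n}\le c_n}]=-\Espe[\rva{Y}{n}\indic_{\rva{Y}{n}>c_n}]\le 0$ by centering), so a Bennett-type bound gives $\Espe[e^{\lambda \rva{Y}{n}\indic_{\rva{Y}{n}\le c_n}}]\le 1+\tfrac{\lambda^2}{2}e^{\lambda c_n}\Espe[\rva{Y}{n}^2]$ and hence
\[
\Prob(B_n\ge\eta N_n^\alpha)\ \le\ \exp\!\Big(-\beta\eta\,N_n^{\alpha(1-\varepsilon)}+\tfrac{\beta^2 e^\beta}{2}\,N_n^{1-2\alpha\varepsilon}\,\Espe[\rva{Y}{n}^2]\Big).
\]
Here $\lambda c_n=\beta$ stays bounded, and by (H2) the error term is $o(N_n^{\alpha(1-\varepsilon)})$: this is exactly the exponent identity $1-2\alpha\varepsilon+[\alpha(1+\varepsilon)-1]=\alpha(1-\varepsilon)$ for which the choice $c_n=N_n^{\alpha\varepsilon}$ is tailored. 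Taking $\beta$ large makes this contribution $\ll\exp(-qy^{1-\varepsilon}N_n^{\alpha(1-\varepsilon)})$, hence negligible. For $J_n$ I would count the number $M_n$ of exceedances of $c_n$ and bound
\[
\Prob\!\big(J_n\ge(y-\eta)N_n^\alpha\big)\ \le\ \sum_{k\ge 1}\binom{N_n}{k}\,\Prob\!\Big(\rva[1]{Y}{n}>c_n,\dots,\rva[k]{Y}{n}>c_n,\ \textstyle\sum_{j=1}^{k}\rva[j]{Y}{n}\ge(y-\eta)N_n^\alpha\Big).
\]
The term $k=1$ equals $N_n\Prob(\rva{Y}{n}\ge(y-\eta)N_n^\alpha)$, with logarithm $\sim-q(y-\eta)^{1-\varepsilon}N_n^{\alpha(1-\varepsilon)}$ by (H1). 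For $k\ge 2$, subadditivity of $t\mapsto t^{1-\varepsilon}$ on the jumps (each $>c_n$) shows the optimal configuration is one jump of size $\approx(y-\eta)N_n^\alpha$ with the others at the threshold, so each extra jump carries a factor $\le\exp(-q(1-\delta)c_n^{1-\varepsilon}+\log N_n)$; since $c_n^{1-\varepsilon}=N_n^{\alpha\varepsilon(1-\varepsilon)}\gg\log N_n$, this beats the combinatorial factor $\binom{N_n}{k}\le N_n^k$ and the series is dominated by its first term. Thus $\limsup N_n^{-\alpha(1-\varepsilon)}\log\Prob(J_n\ge(y-\eta)N_n^\alpha)\le-q(y-\eta)^{1-\varepsilon}$. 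Combining the two parts and letting $\eta\downarrow 0$ gives $\limsup\le-qy^{1-\varepsilon}$, matching the lower bound.

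The main obstacle is the jump part: making the multiple-jumps estimate rigorous and uniform in $k$. Concretely one needs a uniform version of (H1) valid over the whole range $t\in[c_n,O(N_n^\alpha)]$, together with monotonicity of the survival function to dispose of jumps larger than $N_n^\alpha$, and a clean quantitative form of the subadditivity argument guaranteeing that the per-jump cost $c_n^{1-\varepsilon}$ genuinely dominates $\log N_n$ for all $k$ simultaneously, so that the sum over $k$ does not accumulate. The more structural point is that the single truncation level $c_n=N_n^{\alpha\varepsilon}$ must be simultaneously compatible with (H1), so that exceedances retain a semiexponential tail, and with (H2), so that the bounded part concentrates; this is precisely the regime $\alpha>(1+\varepsilon)^{-1}$ in which the single jump, rather than Gaussian fluctuations, governs the deviation.
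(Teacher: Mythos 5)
Your lower bound (one explicit big jump of size $N_n^\alpha y+t_n$ plus a Chebyshev control of the remaining centered sum) is exactly the paper's minoration of $R_{n,0}$, and your overall strategy is sound; but your upper bound takes a genuinely different route. The paper first peels off the event that some $\rva[i]{Y}{n}$ exceeds the \emph{full} level $N_n^\alpha y$ (this term $R_{n,0}$ already carries the main asymptotics $-qy^{1-\varepsilon}$ by a union bound and \ref{hyp:tails}), and then kills the complementary term $P_{n,0}$ with a \emph{single} Chernoff bound at $\lambda=q'(N_n^\alpha y)^{-\varepsilon}$, showing $\Espe[e^{\lambda \rva{Y}{n}}\indic_{\rva{Y}{n}<N_n^\alpha y}]\leqslant 1+o(N_n^{\alpha(1-\varepsilon)-1})$ by a Taylor expansion below $(N_n^\alpha y)^\varepsilon$ (using \ref{hyp:mt2_weak_array_mob_v2}) and an integration by parts on $[(N_n^\alpha y)^\varepsilon,N_n^\alpha y]$ (using the uniform tail bound of their Lemma \ref{lem:tail}, which is precisely the ``uniform version of (H1)'' you ask for). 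You instead split $\rva{T}{n}=B_n+J_n$ at the low threshold $c_n=N_n^{\alpha\varepsilon}$ and count exceedances. Your $B_n$ estimate is clean and correct (the exponent bookkeeping $1-2\alpha\varepsilon+\alpha(1+\varepsilon)-1=\alpha(1-\varepsilon)$ is right, and centering gives $\Espe[\rva{Y}{n}\indic_{\rva{Y}{n}\leqslant c_n}]\leqslant 0$). The part you defer --- the sum over $k\geqslant 2$ exceedances --- is the real work, and it is essentially the machinery the paper deploys not here but in the Gaussian and transition regimes (Lemmas \ref{lem:pinm_gaussian}--\ref{lem:I2m}): one must discretize the product measure on integer cells to convert the pointwise tail bound into an integral against $e^{-q''\sum u_i^{1-\varepsilon}}\,du$, minimize the concave function $\sum u_i^{1-\varepsilon}$ at a vertex of the constraint polytope, and cut the sum at an $m_n\approx N_n^{\alpha(1-\varepsilon)^2}$ beyond which the per-jump cost alone dominates. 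Two points in your sketch need care: (i) your jump region is unbounded above, whereas \ref{hyp:tails} only controls the tail for $y_n\preccurlyeq N_n^\alpha$, so you should first remove the event $\max_i\rva[i]{Y}{n}\geqslant N_n^\alpha y$ exactly as the paper does (it conveniently supplies the matching main term anyway); (ii) the vertex value is $(k-1)c_n^{1-\varepsilon}+(S-(k-1)c_n)^{1-\varepsilon}$, and one must check $(k-1)c_n=o(N_n^\alpha)$ uniformly for $k\leqslant m_n$ so the second summand stays $\sim S^{1-\varepsilon}$. In short, both routes work; the paper's is shorter for this theorem because the single truncated-MGF estimate replaces the exceedance combinatorics, while your decomposition is the one that generalizes to the regimes where several moderate jumps genuinely compete with the Gaussian contribution.
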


%


%


As in \cite{Nagaev69-1, Nagaev69-2,FATP2020}, the proof of Theorem \ref{thm:nagaev_weak_array_mob_eps} immediately adapts to show that, if $x_n \gg N_n^{(1+\varepsilon)^{-1}}$, if, for all $x_n^{\varepsilon} \leqslant y_n \leqslant x_n(1+\delta)$ for some $\delta>0$, $\log\Prob(\rva{Y}{n} \geqslant y_n)\sim -q y_n^{1-\varepsilon}$, and if $\Var(\rva{Y}{n}) = o(x_n^{(1+\varepsilon)}/N_n)$, then
\begin{align*}
\log \Prob( \rva{T}{n}  \geqslant x_n ) \sim - q x_n^{1-\varepsilon}.
\end{align*}
In this paper (see also Theorems \ref{thm:nagaev_weak_array_mob_eps2} and \ref{thm:nagaev_weak_array_mob_eps_intermediate}), we have chosen to explicit the deviations 
in terms of powers of $N_n$, as it is now standard in large deviation theory. 

\medskip

In addition, the proof of Theorem \ref{thm:nagaev_weak_array_mob_eps} immediately adapts to show that, if $L$ is a slowly varying function 
such that, for all $N_n^{\alpha \varepsilon}/L(N_n^\alpha) \preccurlyeq y_n \preccurlyeq N_n^\alpha$, $\log \Prob(\rva{Y}{n} \geqslant y_n) \sim -L(y_n) y_n^{1-\varepsilon}$ and if assumption \ref{hyp:mt2_weak_array_mob_v2} holds, then,  
for all $y \geqslant 0$,
\begin{align*}
\lim_{n \to \infty} \frac{1}{L(N_n^\alpha) N_n^{\alpha(1-\varepsilon)}} \log \Prob(\rva{T}{n} \geqslant N_n^{\alpha} y) = - y^{1-\varepsilon}.
\end{align*}
The same is true for Theorem \ref{thm:nagaev_weak_array_mob_eps2} below whereas Theorem \ref{thm:nagaev_weak_array_mob_eps_intermediate} below requires additional assumptions on $L$ to take into account the regularly varying tail assumption. 

\medskip

Moreover, if an analogous assumption as \ref{hyp:tails} for the left tail of $\rva{Y}{n}$ is also satisfied, then $\rva{T}{n}$ satisfies a large deviation principle at speed $N_n^{\alpha(1-\varepsilon)}$ with rate function $-q\abs{y}^{1-\varepsilon}$ (the same remark applies to Theorems \ref{thm:nagaev_weak_array_mob_eps2} and \ref{thm:nagaev_weak_array_mob_eps_intermediate}).


\begin{thm}[Gaussian range] \label{thm:nagaev_weak_array_mob_eps2}
Let $\varepsilon \in \intervalleoo{0}{1}$, $q > 0$, and $1/2 < \alpha < (1+\varepsilon)^{-1}$. Suppose that \ref{hyp:tails} holds together with:
\begin{description}
\item[(H2')\label{hyp:var_inf}]
$
\Espe[\rva{Y}{n}^2]\to \sigma^2;
$
\item[(H2+)\label{hyp:mt3_inf}] 
there exists $\gamma \in \intervalleof{0}{1}$ such that $\Espe[\abs{\rva{Y}{n}}^{2+\gamma}]=o(N_n^{\gamma(1-\alpha)})$.
\end{description}

Then, for all $y \geqslant 0$,
\[
\lim_{n \to \infty} \frac{1}{N_n^{2\alpha-1} }\log \Prob( \rva{T}{n} \geqslant N_n^{\alpha} y ) = - \frac{y^2}{2\sigma^2}.
\]
\end{thm}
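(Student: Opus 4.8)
The plan is to reduce to a \emph{truncated} sum, to which a Gärtner--Ellis/Chernoff computation applies; this detour is forced because $\rva{Y}{n}$ has an infinite Laplace transform (its upper tail is only semiexponential). First I would fix an exponent $\beta$ with $\max\{\alpha\varepsilon,(2\alpha-1)/(1-\varepsilon)\}<\beta<1-\alpha$; such a $\beta$ exists precisely because $1/2<\alpha<(1+\varepsilon)^{-1}$ (both $\alpha\varepsilon<1-\alpha$ and $(2\alpha-1)/(1-\varepsilon)<1-\alpha$ hold under $\alpha<(1+\varepsilon)^{-1}$). Setting $b_n\defeq N_n^\beta$, $\widetilde Y_{n,i}\defeq\rva[i]{Y}{n}\indic_{\{\rva[i]{Y}{n}\le b_n\}}$ and $\widetilde T_n\defeq\sum_{i=1}^{N_n}\widetilde Y_{n,i}$ (with $\widetilde Y_n$ a generic summand), truncating from above can only decrease the sum, so $\Prob(\widetilde T_n\ge N_n^\alpha y)\le\Prob(\rva{T}{n}\ge N_n^\alpha y)$, while a union bound gives $\Prob(\rva{T}{n}\ge N_n^\alpha y)\le\Prob(\widetilde T_n\ge N_n^\alpha y)+N_n\Prob(\rva{Y}{n}>b_n)$. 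Since $\alpha\varepsilon<\beta<\alpha$, assumption \ref{hyp:tails} applies at $b_n$ and yields $N_n^{-(2\alpha-1)}\log\big(N_n\Prob(\rva{Y}{n}>b_n)\big)\to-\infty$ because $\beta(1-\varepsilon)>2\alpha-1$; the jump term is therefore negligible at the relevant speed and it suffices to treat $\widetilde T_n$.

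The heart of the proof is the limiting scaled cumulant generating function of $\widetilde T_n$ at speed $N_n^{2\alpha-1}$. Fix $\lambda\ge0$ and set $t_n\defeq\lambda N_n^{\alpha-1}\to0$; since $\beta<1-\alpha$ one has $t_nb_n\to0$, so $e^{t_n\widetilde Y_n}$ stays uniformly near $1$ on the upper-bounded support, and a Taylor expansion whose remainder is controlled by $\abs{x}^{2+\gamma}$ on $(-\infty,t_nb_n]$ gives
\[
\Big\lvert\Espe\big[e^{t_n\widetilde Y_n}\big]-1-t_n\Espe[\widetilde Y_n]-\tfrac{t_n^2}{2}\Espe[\widetilde Y_n^2]\Big\rvert\le C\,t_n^{2+\gamma}\,\Espe[\abs{\rva{Y}{n}}^{2+\gamma}].
\]
Multiplying $\log\Espe[e^{t_n\widetilde Y_n}]$ by $N_n^{2-2\alpha}$: the remainder is $O\big(N_n^{\gamma(\alpha-1)}\Espe[\abs{\rva{Y}{n}}^{2+\gamma}]\big)=o(1)$ by \ref{hyp:mt3_inf}; the centering bias $N_n^{2-2\alpha}t_n\Espe[\widetilde Y_n]=-\lambda N_n^{1-\alpha}\Espe[\rva{Y}{n}\indic_{\{\rva{Y}{n}>b_n\}}]$ vanishes; and $N_n^{2-2\alpha}\tfrac{t_n^2}{2}\Espe[\widetilde Y_n^2]=\tfrac{\lambda^2}{2}\Espe[\widetilde Y_n^2]\to\tfrac{\sigma^2\lambda^2}{2}$, the convergence $\Espe[\widetilde Y_n^2]\to\sigma^2$ coming from \ref{hyp:var_inf} and the vanishing of $\Espe[\rva{Y}{n}^2\indic_{\{\rva{Y}{n}>b_n\}}]$. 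The last two vanishings I would obtain by splitting each tail integral at $N_n^\alpha$: on $(b_n,N_n^\alpha]$ the semiexponential bound \ref{hyp:tails} forces superpolynomial decay (of order $b_n^2e^{-qb_n^{1-\varepsilon}}$), while on $(N_n^\alpha,\infty)$ the moment bound \ref{hyp:mt3_inf} together with $\alpha>1/2$ gives $o(N_n^{\gamma(1-2\alpha)})$. Hence $N_n^{2-2\alpha}\log\Espe[e^{t_n\widetilde Y_n}]\to\sigma^2\lambda^2/2$, the cumulant generating function of a centered Gaussian of variance $\sigma^2$.

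The upper bound is then Chernoff's inequality: for $\lambda>0$,
\[
\frac{1}{N_n^{2\alpha-1}}\log\Prob(\widetilde T_n\ge N_n^\alpha y)\le-\lambda y+N_n^{2-2\alpha}\log\Espe\big[e^{t_n\widetilde Y_n}\big]\longrightarrow-\lambda y+\frac{\sigma^2\lambda^2}{2},
\]
and minimizing over $\lambda>0$ (at $\lambda=y/\sigma^2$) gives $\limsup\le-y^2/(2\sigma^2)$, i.e.\ the Legendre transform of $\sigma^2\lambda^2/2$. For the matching lower bound I would run the Gärtner--Ellis lower bound via a change of measure (Cramér tilting) with $t_n=(y/\sigma^2)N_n^{\alpha-1}$: because $t_nb_n\to0$ the tilted law of $\widetilde Y_n$ has mean $\sim yN_n^{\alpha-1}$ and variance $\sim\sigma^2$, so the tilted mean of $\widetilde T_n$ sits at $\sim N_n^\alpha y$ with standard deviation $\sim\sigma\sqrt{N_n}$; a Chebyshev estimate bounds the tilted probability of $[N_n^\alpha y,N_n^\alpha y+c\sqrt{N_n}\,]$ from below, and reverting the tilt costs $e^{-t_n(N_n^\alpha y+c\sqrt{N_n})}\Espe[e^{t_n\widetilde Y_n}]^{N_n}$, whose normalized logarithm tends to $-\lambda y+\sigma^2\lambda^2/2=-y^2/(2\sigma^2)$ (the $c\sqrt{N_n}$ correction being $O(N_n^{1/2-\alpha})=o(1)$ since $\alpha>1/2$). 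Combined with $\Prob(\widetilde T_n\ge N_n^\alpha y)\le\Prob(\rva{T}{n}\ge N_n^\alpha y)$ and the negligibility of the jump term, both inequalities close; the case $y=0$ is immediate.

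The main obstacle is the bookkeeping of the second step: one must exhibit a single exponent $\beta$ that simultaneously suppresses the maximal jump ($\beta(1-\varepsilon)>2\alpha-1$), keeps the exponential tilt harmless ($\beta<1-\alpha$, so that $t_nb_n\to0$ and all tilted moments reduce to untilted ones), and lets \ref{hyp:tails} apply ($\beta>\alpha\varepsilon$); and then extract the convergence $\Espe[\widetilde Y_n^2]\to\sigma^2$ and the negligibility of the Taylor remainder by pitting the semiexponential tail \ref{hyp:tails} (near range) against the moment bound \ref{hyp:mt3_inf} (far range, where $\alpha>1/2$ is decisive). It is exactly the double constraint $1/2<\alpha<(1+\varepsilon)^{-1}$ that makes all these estimates compatible, and this compatibility is what must be verified with care.
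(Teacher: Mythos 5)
Your overall strategy (truncate at a single level $N_n^\beta$, dispose of the exceedances by a union bound, then run a Chernoff/tilting argument on the truncated sum) is the classical one, but the very first step contains a genuine error that invalidates the proof on part of the stated range of $\alpha$. You need a truncation exponent $\beta$ satisfying simultaneously $\beta<1-\alpha$ (so that $t_nb_n\to0$ and the Taylor expansion of the tilted moment generating function is legitimate) and $\beta(1-\varepsilon)>2\alpha-1$ (so that $N_n\Prob(\rva{Y}{n}>N_n^\beta)\approx N_n e^{-qN_n^{\beta(1-\varepsilon)}}$ is negligible at speed $N_n^{2\alpha-1}$). These two constraints are compatible if and only if $(2\alpha-1)/(1-\varepsilon)<1-\alpha$, i.e.\ $\alpha<(2-\varepsilon)/(3-\varepsilon)$, and one checks that $(2-\varepsilon)/(3-\varepsilon)<(1+\varepsilon)^{-1}$ strictly (the comparison reduces to $(1-\varepsilon)^2>0$). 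So for $\alpha\in\intervallefo{(2-\varepsilon)/(3-\varepsilon)}{(1+\varepsilon)^{-1}}$ --- for instance $\varepsilon=1/2$ and $\alpha\in\intervallefo{3/5}{2/3}$ --- no admissible $\beta$ exists, and your parenthetical claim that both inequalities hold whenever $\alpha<(1+\varepsilon)^{-1}$ is false. The obstruction is intrinsic to the union bound: discarding the constraint $\rva{T}{n}\geqslant N_n^\alpha y$ when estimating the exceedance event is too lossy, since a single variable exceeding a level $N_n^\beta$ with $\beta(1-\varepsilon)\leqslant 2\alpha-1$ has probability far larger than $e^{-cN_n^{2\alpha-1}}$.

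The paper's proof is designed precisely to get around this. It truncates at the low level $(N_n^\alpha y)^\varepsilon$, harmless for the tilt on the whole Gaussian range since $\alpha\varepsilon<1-\alpha$, but instead of union-bounding the exceedances it decomposes the event according to the number $m$ of variables falling in the intermediate window $\intervallefo{(N_n^\alpha y)^{\varepsilon}}{N_n^{\alpha}y}$ and shows (Lemmas \ref{lem:pinm_gaussian}, \ref{lem:I1m_gaussian}, and \ref{lem:I2m}) that each such configuration, \emph{combined with} the requirement that the total sum reach $N_n^\alpha y$, still costs at least $e^{-N_n^{2\alpha-1}y^2/(2\sigma^2)(1+o(1))}$: either the intermediate variables carry a macroscopic fraction of $N_n^\alpha y$, at semiexponential cost of order $N_n^{\alpha(1-\varepsilon)}\gg N_n^{2\alpha-1}$, or they do not, and the remaining truncated sum must still produce an essentially full Gaussian deviation. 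Only the exceedances of the top level $N_n^\alpha y$ itself are union-bounded (the term $R_{n,0}$), which is affordable because $\alpha(1-\varepsilon)>2\alpha-1$ throughout the Gaussian range. Your argument does go through for $\alpha<(2-\varepsilon)/(3-\varepsilon)$, and the remaining computations (control of the Taylor remainder via \ref{hyp:mt3_inf}, vanishing of the centering bias, the tilted lower bound) are sound; but to cover the full range $1/2<\alpha<(1+\varepsilon)^{-1}$ you must replace the union bound over exceedances of $N_n^\beta$ by a conditional analysis of the intermediate exceedances of the type the paper carries out.
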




\begin{thm}[Transition] \label{thm:nagaev_weak_array_mob_eps_intermediate}
Let $\varepsilon \in \intervalleoo{0}{1}$, $q > 0$, and $\alpha = (1+\varepsilon)^{-1}$. Suppose that \ref{hyp:tails}, \ref{hyp:var_inf}, and \ref{hyp:mt3_inf} hold. Then, for all $y \geqslant 0$,
\begin{equation} \label{eq:I_def}
\lim_{n \to \infty} \frac{1}{N_n^{(1-\varepsilon)/(1+\varepsilon)} }\log \Prob( \rva{T}{n} \geqslant N_n^{1/(1+\varepsilon)} y ) = - \inf_{0\leqslant \theta \leqslant 1} \bigl\{ q\theta^{1-\varepsilon} y^{1-\varepsilon}+\frac{(1-\theta)^2y^2}{2\sigma^2} \bigr\} \eqdef - I(y) .
\end{equation}
\end{thm}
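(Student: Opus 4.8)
The plan is to prove the transition result by combining the two boundary regimes via a truncation-at-level-$\theta$ argument, turning the variational formula on the right-hand side into the natural interpolation between the maximal-jump contribution (the $q\theta^{1-\varepsilon}y^{1-\varepsilon}$ term) and the Gaussian contribution (the $(1-\theta)^2y^2/(2\sigma^2)$ term). Writing $b_n \defeq N_n^{1/(1+\varepsilon)}$, the speed is $N_n^{(1-\varepsilon)/(1+\varepsilon)} = b_n^{1-\varepsilon}$, and the event under study is $\{\rva{T}{n} \geqslant b_n y\}$. The heuristic is that, to realize a deviation of order $b_n y$, a fraction $\theta$ of the displacement is produced by one or a few big jumps (each of order $b_n$, costing $q(\theta y)^{1-\varepsilon} b_n^{1-\varepsilon}$ on the exponential scale by \ref{hyp:tails}), while the remaining fraction $(1-\theta)$ comes from the bulk behaving Gaussianly with variance $N_n\sigma^2 \sim b_n^{1+\varepsilon}\sigma^2$, so that a displacement $(1-\theta)b_n y$ costs $\tfrac{((1-\theta)b_n y)^2}{2N_n\sigma^2} = \tfrac{(1-\theta)^2 y^2}{2\sigma^2}\,b_n^{1-\varepsilon}$. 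Optimizing over the split $\theta$ gives exactly $I(y)$.

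First I would establish the lower bound, which is the easier and more transparent direction. For a fixed $\theta \in [0,1]$, I would lower-bound $\Prob(\rva{T}{n} \geqslant b_n y)$ by the probability that exactly one summand makes a jump of size at least $\theta b_n y (1+o(1))$ while the sum of the remaining $N_n-1$ summands exceeds $(1-\theta) b_n y$. By independence this factorizes; the jump factor is handled by \ref{hyp:tails} and contributes $-q\theta^{1-\varepsilon}y^{1-\varepsilon}$ at speed $b_n^{1-\varepsilon}$, and the bulk factor is controlled by a central-limit / moderate-deviation estimate for $\rva{T}{n}$ using \ref{hyp:var_inf} and the Lindeberg-type control afforded by \ref{hyp:mt3_inf}, contributing $-\tfrac{(1-\theta)^2y^2}{2\sigma^2}$. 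Taking the infimum over $\theta$ after passing to the limit yields $\liminf \geqslant -I(y)$. Here I would lean on the machinery already developed for Theorems \ref{thm:nagaev_weak_array_mob_eps} and \ref{thm:nagaev_weak_array_mob_eps2}, whose proofs supply precisely these two building blocks in the two extreme regimes $\alpha > (1+\varepsilon)^{-1}$ and $\alpha < (1+\varepsilon)^{-1}$.

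For the upper bound I would truncate each variable at a threshold of order $b_n$, say at $\vartheta b_n$ for a well-chosen cutoff, and decompose the event $\{\rva{T}{n} \geqslant b_n y\}$ according to how much of the excess is carried by the truncated (bounded) part versus by the exceedances. The bounded part admits a finite Laplace transform, so a Gärtner–Ellis / Chernoff bound at the correct scaling controls it and produces the Gaussian cost; the exceedances are counted and each one contributes a tail cost governed by \ref{hyp:tails}. Summing over the possible number of big jumps and over the continuum of possible splits (discretized into finitely many $\theta$-bins, with the discretization error vanishing by continuity of the rate) gives $\limsup \leqslant -I(y)$. The main obstacle, and the place where the transition genuinely differs from the two pure regimes, is that at $\alpha = (1+\varepsilon)^{-1}$ the jump cost and the Gaussian cost are of the \emph{same} exponential order, so neither can be treated as negligible: one must track both simultaneously and control the cross-terms in the truncation (e.g. the covariance between truncated bulk and exceedance counts, and the contribution of intermediate-sized summands near the truncation level). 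Ensuring that the discretized optimization over $\theta$ converges to the stated infimum $I(y)$, uniformly in $n$, is the delicate technical heart of the argument; this is exactly where the third-moment assumption \ref{hyp:mt3_inf} is needed to make the Gaussian approximation precise at this critical scale.
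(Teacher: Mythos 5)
Your overall strategy --- lower bound by forcing one big jump of size $\theta N_n^{1/(1+\varepsilon)}y$ against a Gaussian bulk and optimizing over $\theta$; upper bound by decomposing according to the number of exceedances of a truncation level while tracking the jump cost and the Gaussian cost simultaneously --- is the same as the paper's. Your lower bound is essentially the paper's proof of \eqref{eq:pi1}, which bounds $\Pi_{n,1}$ below by the product of a one-jump probability and the probability that the sum of the remaining \emph{truncated} variables exceeds $N_n^\alpha t y$, the latter controlled by Lemma \ref{lem:pi0} (a unilateral G\"artner--Ellis argument for the law conditioned on $\rva{Y}{n}<(N_n^\alpha y)^\varepsilon$). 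One caveat even there: the bulk estimate cannot be a central-limit or moderate-deviation statement for the untruncated sum, since at $\alpha=(1+\varepsilon)^{-1}$ that sum is precisely what the theorem says is \emph{not} Gaussian for large $y$; it must be the restricted event $\Pi_{n,0}$, which is harmless for a lower bound.

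The genuine gap is in the upper bound, at the point you yourself flag as the ``delicate technical heart''. Writing $b_n=N_n^{1/(1+\varepsilon)}$, truncating at a level $\vartheta b_n$ and asserting that the bounded part ``produces the Gaussian cost'' via a G\"artner--Ellis/Chernoff bound does not work as stated: the log-Laplace transform of the variable truncated at $\Theta(b_n)$, evaluated at the relevant tilt $\lambda\sim b_n^{-\varepsilon}$, is not asymptotically quadratic --- the window $\intervallefo{b_n^{\varepsilon}}{\vartheta b_n}$ contributes a term of order $\sup_u\{\lambda u-qu^{1-\varepsilon}\}$, which is of the same exponential order $b_n^{1-\varepsilon}$ as the Gaussian cost and can even be positive; consistently, Section \ref{sec:baby} shows that variables truncated at level $N_n^\beta c$ with $\beta=(1+\varepsilon)^{-1}$ have non-convex rate functions, so G\"artner--Ellis is unavailable in principle. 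The paper avoids this by truncating much lower, at $(N_n^\alpha y)^{\varepsilon}$, where $\lambda \rva{Y}{n}=O(1)$ and a Taylor expansion using \ref{hyp:var_inf} and \ref{hyp:mt3_inf} yields the genuinely quadratic bound \eqref{eq:cheb_expo}; every variable in the intermediate window $\intervallefo{(N_n^\alpha y)^{\varepsilon}}{N_n^\alpha y}$ is then counted as an exceedance, the number $m$ of exceedances is summed over (with $m>m_n$ killed by Lemma \ref{lem:pinm_gaussian}), and the step that actually produces the single-jump term $q\theta^{1-\varepsilon}y^{1-\varepsilon}$ in $I(y)$ is a concavity argument: $s_m(u_1,\dots,u_m)=\sum_i u_i^{1-\varepsilon}$ is minimized, on the set where $\sum_i u_i$ is fixed, at a configuration with one large coordinate and all others at the threshold, which reduces the whole sum over $m$ to the one-dimensional optimization over $\theta$ (Lemmas \ref{lem:I1m_gaussian} and \ref{lem:I2m}). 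That concavity step is the missing mechanism in your sketch: without it, discretizing over $\theta$-bins does not rule out that many medium-sized jumps beat one big jump. (The ``covariance between truncated bulk and exceedance counts'' you worry about is a non-issue: conditioning on which indices exceed the threshold, the two blocks are independent.)
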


Let us explicit a little the rate function $I$. Let  $f(\theta)=q\theta^{1-\varepsilon} y^{1-\varepsilon}+{(1-\theta)^2y^2/}{(2\sigma^2)}$. An easy computation shows that, if $y \leqslant y_0\defeq ((1-\varepsilon^2)(1+1/\varepsilon)^\varepsilon q\sigma^2)^{1/(1+\varepsilon)}$, $f$ is increasing and its minimum $y^2/(2\sigma^2)$ is attained at $\theta=0$. If $y>y_0$,
 $f$ has two local minima, at $0$ and at $\theta(y)$: the latter corresponds to the greatest of the two roots in $\intervalleff{0}{1}$ of $f'(t)=0$, equation equivalent to 
\begin{align} \label{eq:nag_6}
(1-\theta)\theta^{\epsilon}=\frac{(1-\varepsilon)q\sigma^2}{y^{1+\varepsilon}}.
\end{align}
If $y_0< y\leqslant y_1\defeq (1+\varepsilon)\left({q\sigma^2}/{(2\varepsilon)^{\varepsilon}}\right)^{\frac{1}{1+\varepsilon}}$, then $f(\theta(y))\geqslant f(0)$. And if $y>y_1$, $f(\theta(y))< f(0)$. As a consequence, for all $y\geqslant 0$,
\[
I(y)
 = \begin{cases}
\frac{y^2}{2\sigma^2} & \text{if  $y\leqslant y_1$}\\ 
q\theta(y)^{1-\varepsilon} y^{1-\varepsilon}+\frac{(1-\theta(y))^2y^2}{2\sigma^2} & \text{if  $y> y_1$} .
\end{cases}
\]

%
%
%
%
%
%

\section{Proofs} \label{sec:proofs}

\subsection{Proof of Theorem \ref{thm:nagaev_weak_array_mob_eps} (Maximal jump regime)}

Let us fix $y > 0$. The result for $y=0$ follows by monotony. First, we define
\begin{align}
\Prob( \rva{T}{n} \geqslant  N_n^{\alpha} y )
 & = \Prob( \rva{T}{n} \geqslant  N_n^{\alpha} y,\ \forall i \in \intervallentff{1}{N_n} \quad \rva[i]{Y}{n} <  N_n^{\alpha} y) \nonumber \\
 & \hspace{3cm} + \Prob( \rva{T}{n} \geqslant  N_n^{\alpha} y,\ \exists i \in \intervallentff{1}{N_n} \quad \rva[i]{Y}{n} \geqslant  N_n^{\alpha} y) \nonumber \\
 & \eqdef P_{n,0} + R_{n,0} \label{eq:decomp} .
\end{align}
%
%
%
%
%

\begin{lem} \label{lem2_weak_array_mob_v2}
Under \ref{hyp:tails} and \ref{hyp:mt2_weak_array_mob_v2}, for $\alpha > 1/2$ and $y > 0$,
\[
\lim_{n \to \infty} \frac{1}{ N_n^{\alpha(1-\varepsilon)} } \log  R_{n,0} = -q y^{1-\varepsilon} .
\]
\end{lem}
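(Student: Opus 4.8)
The plan is to bound $R_{n,0}$ from above and below, reducing everything in each case to the single-jump tail $\Prob(\rva{Y}{n}\geqslant N_n^\alpha y)$, whose logarithm is governed by \ref{hyp:tails}. Indeed, for fixed $y>0$ the sequence $y_n=N_n^\alpha y=\Theta(N_n^\alpha)$ lies in the admissible window $N_n^{\alpha\varepsilon}\preccurlyeq y_n\preccurlyeq N_n^\alpha$, so \ref{hyp:tails} applies and gives $\log\Prob(\rva{Y}{n}\geqslant N_n^\alpha y)\sim -q\,y^{1-\varepsilon}N_n^{\alpha(1-\varepsilon)}$ (and likewise with $y$ replaced by $y(1+\delta)$). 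For the \textbf{upper bound} I would simply discard the constraint $\{\rva{T}{n}\geqslant N_n^\alpha y\}$ and apply a union bound over the $N_n$ events $\{\rva[i]{Y}{n}\geqslant N_n^\alpha y\}$, yielding $R_{n,0}\leqslant N_n\,\Prob(\rva{Y}{n}\geqslant N_n^\alpha y)$. Taking logarithms and dividing by $N_n^{\alpha(1-\varepsilon)}$, the term $\log N_n$ is negligible since $N_n\to\infty$ and $\alpha(1-\varepsilon)>0$, so \ref{hyp:tails} gives $\limsup_n N_n^{-\alpha(1-\varepsilon)}\log R_{n,0}\leqslant -q\,y^{1-\varepsilon}$.

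For the \textbf{lower bound}, fix $\delta>0$ and isolate a single large jump with a safety margin. I would consider the event on which $\rva[1]{Y}{n}\geqslant N_n^\alpha y(1+\delta)$ and, independently, the remaining sum satisfies $\sum_{i=2}^{N_n}\rva[i]{Y}{n}\geqslant -\delta N_n^\alpha y$. On this event one has $\rva[1]{Y}{n}\geqslant N_n^\alpha y$ (so a jump exceeding the threshold occurs) and $\rva{T}{n}\geqslant N_n^\alpha y(1+\delta)-\delta N_n^\alpha y=N_n^\alpha y$; hence it is contained in the event defining $R_{n,0}$. Since the two conditions bear on disjoint families of variables, independence gives $R_{n,0}\geqslant \Prob(\rva{Y}{n}\geqslant N_n^\alpha y(1+\delta))\cdot\Prob\bigl(\sum_{i=2}^{N_n}\rva[i]{Y}{n}\geqslant -\delta N_n^\alpha y\bigr)$.

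The second factor tends to $1$. As the $\rva[i]{Y}{n}$ are centered with variance $\Espe[\rva{Y}{n}^2]$, Chebyshev's inequality bounds the complementary probability by $(N_n-1)\Espe[\rva{Y}{n}^2]/(\delta^2 y^2 N_n^{2\alpha})\leqslant \Espe[\rva{Y}{n}^2]/(\delta^2 y^2 N_n^{2\alpha-1})$, which vanishes: $\alpha>1/2$ makes the exponent $2\alpha-1$ positive, and by \ref{hyp:mt2_weak_array_mob_v2} together with $\varepsilon<1$ one has $\Espe[\rva{Y}{n}^2]=o(N_n^{\alpha(1+\varepsilon)-1})=o(N_n^{2\alpha-1})$. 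Thus the logarithm of the second factor is $o(N_n^{\alpha(1-\varepsilon)})$, while \ref{hyp:tails} applied to the first factor gives $N_n^{-\alpha(1-\varepsilon)}\log\Prob(\rva{Y}{n}\geqslant N_n^\alpha y(1+\delta))\to -q\,(y(1+\delta))^{1-\varepsilon}$. Taking $\liminf$ and then letting $\delta\to 0$ yields $\liminf_n N_n^{-\alpha(1-\varepsilon)}\log R_{n,0}\geqslant -q\,y^{1-\varepsilon}$, which with the upper bound proves the lemma.

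The main obstacle is the lower bound, and precisely the control of the remainder $\sum_{i=2}^{N_n}\rva[i]{Y}{n}$: one must guarantee that the $N_n-1$ centered variables do not drag the total below $N_n^\alpha y$. This is exactly where \ref{hyp:mt2_weak_array_mob_v2} and the hypothesis $\alpha>1/2$ are used, ensuring via Chebyshev that the centered remainder stays above $-\delta N_n^\alpha y$ with probability tending to $1$. The $(1+\delta)$ cushion is introduced so that the slack $\delta N_n^\alpha y$ can absorb these fluctuations, and letting $\delta\to0$ at the end recovers the sharp constant $q\,y^{1-\varepsilon}$.
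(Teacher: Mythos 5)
Your proof is correct and follows essentially the same route as the paper: the upper bound is the identical union bound $R_{n,0}\leqslant N_n\,\Prob(\rva{Y}{n}\geqslant N_n^\alpha y)$ combined with \ref{hyp:tails}, and the lower bound isolates one jump of size $N_n^\alpha(y+\delta')$ (your $\delta'=\delta y$ is a cosmetic reparametrization of the paper's $\delta$) and controls the remaining centered sum by Chebyshev using $\alpha>1/2$ and \ref{hyp:mt2_weak_array_mob_v2}, before letting the cushion go to zero.
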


\begin{proof}[Proof of Lemma \ref{lem2_weak_array_mob_v2}]
Using \ref{hyp:tails},
\begin{align}
\limsup_{n \to \infty} \frac{1}{ N_n^{\alpha(1-\varepsilon)} } \log R_{n,0}
 & \leqslant \lim_{n \to \infty} \frac{1}{ N_n^{\alpha(1-\varepsilon)} } \log (N_n \Prob(\rva{Y}{n} \geqslant  N_n^{\alpha} y))
 = - qy^{1-\varepsilon} . \label{eq:R0_maj}
\end{align}
Let us prove the converse inequality. Let $\delta > 0$. We have,
\begin{align*}
R_{n,0} & \geqslant \Prob\left( \rva{T}{n} \geqslant  N_n^{\alpha} y,\ \rva[1]{Y}{n} \geqslant  N_n^{\alpha} y\right)
 \geqslant \Prob\left( \rva[N_n-1]{T}{n} \geqslant -N_n^\alpha\delta\right) \Prob(\rva{Y}{n} \geqslant  N_n^{\alpha} (y +\delta)).
\end{align*}
By Chebyshev's inequality, observe that
\begin{align*}
\Prob( \rva[N_n-1]{T}{n} \geqslant -{N_n}^{\alpha}\delta )
 & \geqslant 1-\frac{\Var(\rva{Y}{n})}{N_n^{2\alpha-1}\delta^2} \to 1 ,
\end{align*}
using \ref{hyp:mt2_weak_array_mob_v2}.
Finally, by \ref{hyp:tails}, one gets
\begin{align*}
\liminf_{n \to \infty} \frac{1}{N_n^{\alpha(1-\varepsilon)}} \log R_{n,0}
 & \geqslant \lim_{n \to \infty} \frac{1}{N_n^{\alpha(1-\varepsilon)}} \log \Prob(\rva{Y}{n} \geqslant  N_n^{\alpha}(y + \delta))
 = - q (y + \delta)^{1-\varepsilon} .
\end{align*}
We conclude by letting $\delta \to 0$.
\end{proof}

To complete the proof of Theorem \ref{thm:nagaev_weak_array_mob_eps}, it remains to prove that, for $\alpha > (1+\varepsilon)^{-1}$,
\begin{equation} \label{lem1_weak_array_mob_v2}
\limsup_{n \to \infty} \frac{1}{ N_n^{\alpha(1-\varepsilon)} }\log P_{n,0} \leqslant -q  y^{1-\varepsilon} ,
\end{equation}
and to apply the principle of the largest term (\emph{see, e.g.}, \cite[Lemma 1.2.15]{DZ98}).
Let $q' \in \intervalleoo{0}{q}$.
Using the fact that $\indic_{x \geqslant 0} \leqslant e^x$, we get
\[
P_{n,0} \leqslant e^{-q' (N_n^{\alpha} y)^{1-\varepsilon}} \Espe\left[ e^{\frac{q'}{ (N_n^{\alpha} y)^{\varepsilon}} \rva{Y}{n}} \indic_{\rva{Y}{n} <  N_n^{\alpha} y} \right]^{N_n} .
\]
If we prove that
\[
\Espe\left[ e^{\frac{q'}{ (N_n^{\alpha} y)^{\varepsilon}} \rva{Y}{n}} \indic_{\rva{Y}{n} <  N_n^{\alpha} y} \right] \leqslant 1 + o ( N_n^{\alpha(1-\varepsilon)-1} ),
\]
then
\[
\log P_{n,0} \leqslant -q' (N_n^{\alpha} y)^{1-\varepsilon} + o(N_n^{\alpha(1-\varepsilon)})
\]
and the conclusion follows by letting $q' \to q$.  Write
\begin{align*}
\Espe &\left[ e^{\frac{q'}{(N_n^{\alpha} y)^{\varepsilon}} \rva{Y}{n}} \indic_{\rva{Y}{n} <  N_n^{\alpha} y} \right]
= \Espe \left[ e^{\frac{q'}{ (N_n^{\alpha} y)^{\varepsilon}} \rva{Y}{n}} \indic_{\rva{Y}{n} <  (N_n^{\alpha} y)^{\varepsilon}} \right] + \Espe \left[ e^{\frac{q'}{ (N_n^{\alpha} y)^{\varepsilon}} \rva{Y}{n}} \indic_{(N_n^{\alpha} y)^{\varepsilon} \leqslant\rva{Y}{n} <  N_n^{\alpha} y} \right].
\end{align*}


First, by a Taylor expansion and \ref{hyp:mt2_weak_array_mob_v2}, we get
\begin{align*}
\Espe \left[ e^{\frac{q'}{ (N_n^{\alpha} y)^{\varepsilon}} \rva{Y}{n}} \indic_{\rva{Y}{n} < (N_n^{\alpha} y)^{\varepsilon}} \right] 
 & \leqslant  \Espe \left[ \Bigl(1 + \frac{q'}{(N_n^{\alpha} y)^{\varepsilon}}\rva{Y}{n} + \frac{(q')^2 e^{q'}}{2 (N_n^{\alpha} y)^{2\varepsilon}}\rva{Y}{n}^2\Bigr) \indic_{\rva{Y}{n} <  (N_n^{\alpha} y)^{\varepsilon}}\right]\\ 
 & \leqslant  1 + \frac{(q')^2 e^{q'}}{2}\cdot \frac{\Espe[\rva{Y}{n}^2]}{(N_n^{\alpha} y)^{2\varepsilon}} \nonumber \\
 & = 1 + o (N_n^{\alpha(1-\varepsilon)-1}) . \nonumber
\end{align*}

To bound above the second expectation, we need the following simple consequence of \ref{hyp:tails}.
\begin{lem} \label{lem:tail}
Under \ref{hyp:tails}, for all $y > 0$,
\[
\forall q' < q \quad \exists n_0 \quad \forall n \geqslant n_0 \quad \forall u \in \intervalleff{(N_n^\alpha y)^\varepsilon}{N_n^\alpha y} \quad \log\Prob(\rva{Y}{n}\geqslant u)\leqslant -q' u^{1-\varepsilon} .
\]
\end{lem}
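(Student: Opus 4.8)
The plan is to deduce this uniform tail bound from the purely sequential asymptotics recorded in \ref{hyp:tails}. First I would rewrite the target inequality $\log\Prob(\rva{Y}{n}\geqslant u)\leqslant -q'u^{1-\varepsilon}$ in the equivalent form $\frac{\log\Prob(\rva{Y}{n}\geqslant u)}{-u^{1-\varepsilon}}\geqslant q'$, which is legitimate because $-u^{1-\varepsilon}<0$ so division reverses the inequality. The claim thus becomes a \emph{uniform} lower bound, over the window $u\in\intervalleff{(N_n^\alpha y)^\varepsilon}{N_n^\alpha y}$, on the rate $u\mapsto -u^{-(1-\varepsilon)}\log\Prob(\rva{Y}{n}\geqslant u)$. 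The crucial observation is that this window is exactly calibrated to the domain of \ref{hyp:tails}: since $y>0$ is fixed, $(N_n^\alpha y)^\varepsilon=y^\varepsilon N_n^{\alpha\varepsilon}=\Theta(N_n^{\alpha\varepsilon})$ and $N_n^\alpha y=\Theta(N_n^\alpha)$, so \emph{any} sequence $(u_n)_n$ with $u_n$ in the window satisfies $N_n^{\alpha\varepsilon}\preccurlyeq u_n\preccurlyeq N_n^\alpha$ and is therefore an admissible test sequence for \ref{hyp:tails}.

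Next I would extract the uniformity by contradiction. Suppose the conclusion fails for some $q'<q$: then for infinitely many $n$ there is a point $u_n$ in the window with $\log\Prob(\rva{Y}{n}\geqslant u_n)>-q'u_n^{1-\varepsilon}$. Passing to the corresponding subsequence and completing $(u_n)_n$ off it by any admissible choice (say $u_n=(N_n^\alpha y)^\varepsilon$), I obtain a full sequence lying in the window, to which \ref{hyp:tails} applies and yields $\log\Prob(\rva{Y}{n}\geqslant u_n)\sim -q u_n^{1-\varepsilon}$, i.e.\ $\log\Prob(\rva{Y}{n}\geqslant u_n)/(-q u_n^{1-\varepsilon})\to 1$; note this is a genuine application since $u_n\geqslant (N_n^\alpha y)^\varepsilon\to\infty$. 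But along the violating subsequence, dividing $\log\Prob(\rva{Y}{n}\geqslant u_n)>-q'u_n^{1-\varepsilon}$ by the negative quantity $-q u_n^{1-\varepsilon}$ gives $\log\Prob(\rva{Y}{n}\geqslant u_n)/(-q u_n^{1-\varepsilon})<q'/q<1$, contradicting the convergence to $1$. This forces the uniform bound, proving the lemma.

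The only genuine obstacle is precisely this passage from ``convergence along every admissible sequence'' to ``uniform control over the entire window'', and it is handled by the subsequence extraction: once a worst-case point is isolated into a sequence, \ref{hyp:tails} applies to it verbatim. The rest is bookkeeping: the sign change upon dividing by $-u^{1-\varepsilon}$, the fact that a violating $u_n$ automatically has $\Prob(\rva{Y}{n}\geqslant u_n)>0$ so that the logarithm is finite and the manipulation is licit, and the verification $u_n\to\infty$. Equivalently, and perhaps more transparently, one may phrase the same reasoning as $\liminf_{n}\inf_{u}\bigl(-u^{-(1-\varepsilon)}\log\Prob(\rva{Y}{n}\geqslant u)\bigr)\geqslant q$, obtained by applying \ref{hyp:tails} to a near-minimizing sequence chosen within the window.
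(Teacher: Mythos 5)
Your argument is correct and is essentially the paper's own proof: both proceed by contraposition, extracting from a hypothetical failure a sequence $(u_n)$ lying in the window $\intervalleff{(N_n^\alpha y)^\varepsilon}{N_n^\alpha y}$ (hence satisfying $N_n^{\alpha\varepsilon}\preccurlyeq u_n\preccurlyeq N_n^\alpha$) along which \ref{hyp:tails} is violated. Your version merely spells out the details the paper leaves implicit (completing the subsequence to a full admissible sequence, the sign of the division, finiteness of the logarithm), which is fine.
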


\begin{proof}[Proof of Lemma \ref{lem:tail}]
By contrapposition, if the conclusion of the lemma is false, we can construct a sequence $(u_n)_{n \geqslant 1}$ such that, for all $n \geqslant 1$, $u_n \in \intervalleff{(N_n^\alpha y)^\varepsilon}{N_n^\alpha y}$ and $\log \Prob(\rva{Y}{n}\geqslant u_n) > -q' u_n^{1-\varepsilon}$, whence \ref{hyp:tails} is not satisfied.
\end{proof}

Now, integrating by parts, we get 
\begin{align*}
\Espe &\left[ e^{\frac{q'}{ (N_n^{\alpha} y)^{\varepsilon}} \rva{Y}{n}} \indic_{(N_n^{\alpha} y)^{\varepsilon} \leqslant\rva{Y}{n} <  N_n^{\alpha} y} \right] 
 = \int_{(N_n^{\alpha} y)^{\varepsilon}}^{N_n^{\alpha} y} e^{\frac{q'}{ (N_n^{\alpha} y)^{\varepsilon}} u}  \Prob(\rva{Y}{n} \in du)\\
 & = - \Big[  e^{\frac{q'}{ (N_n^{\alpha} y)^{\varepsilon}} u} \Prob(\rva{Y}{n} \geqslant u) \Big]_{ (N_n^{\alpha} y)^{\varepsilon}}^{ N_n^{\alpha} y} +  \frac{q'}{ (N_n^{\alpha} y)^{\varepsilon}} \int_{ (N_n^{\alpha} y)^{\varepsilon}}^{ N_n^{\alpha} y} e^{\frac{q'}{ (N_n^{\alpha} y)^{\varepsilon}}u} \Prob(\rva{Y}{n} \geqslant u) du \\
 & \leqslant  e^{q'} \Prob(\rva{Y}{n} \geqslant (N_n^{\alpha} y)^{\varepsilon}) + \frac{q'}{(N_n^{\alpha} y)^{\varepsilon}} \int_{(N_n^{\alpha} y)^{\varepsilon}}^{ N_n^{\alpha} y} e^{\frac{q'}{(N_n^{\alpha} y)^{\varepsilon}} u-q''u^{1-\varepsilon}}du \\
 & \leqslant (1 + q'(N_n^{\alpha} y)^{1-\varepsilon}) e^{q'-q''(N_n^{\alpha} y)^{\varepsilon(1-\varepsilon)}} \\
 & = o(N_n^{\alpha(1-\varepsilon)-1})
\end{align*}
for $n$ large enough, using \ref{hyp:tails} and Lemma \ref{lem:tail} with $q''\in \intervalleoo{q'}{q}$, and the supremum of $u \mapsto q'(N_n^{\alpha} y)^{-\varepsilon} u-q''u^{1-\varepsilon}$ over $\intervalleff{(N_n^{\alpha} y)^{\varepsilon}}{N_n^{\alpha} y}$. The proof of Theorem \ref{thm:nagaev_weak_array_mob_eps} is now complete. 

\subsection{Proof of Theorem \ref{thm:nagaev_weak_array_mob_eps2} (Gaussian regime)}

Let us fix $y > 0$. The result for $y=0$ follows by monotony. For all $m \in \intervallentff{0}{N_n}$, we define
\begin{align*}
\Pi_{l,m}(x)
 & = \Prob\Big( \rva[l]{T}{n} \geqslant  x, \, \forall i \in \intervallentff{1}{l-m} \quad \rva[i]{Y}{n}< (N_n^{\alpha} y)^{\varepsilon},\\
 & \hspace{2cm} \forall i \in \intervallentff{l-m+1}{l} \quad (N_n^{\alpha} y)^{\varepsilon}\leqslant \rva[i]{Y}{n} <  N_n^{\alpha} y \Big) ,
\end{align*}
and we denote $\Pi_{N_n,m}(N_n^{\alpha} y)$ by $\Pi_{n,m}$, so that
\begin{align}\label{eq:decomp2}
P_{n}
 & = P_{n,0}+R_{n,0}
 = \sum_{m=0}^{N_n} \binom{N_n}{m} \Pi_{n,m}+R_{n,0} .
\end{align}
%
%
By Lemma \ref{lem2_weak_array_mob_v2} and the fact that, for $\alpha > (1+\varepsilon)^{-1}$, $2\alpha-1 > \alpha(1-\varepsilon)$, we get
\begin{align} \label{eq:gauss_Rn0}
\lim_{n \to \infty} \frac{1}{ N_n^{2\alpha-1} } \log  R_{n,0} = -\infty .
\end{align}

\begin{lem} \label{lem:pi0} 
Under \ref{hyp:tails}, \ref{hyp:var_inf}, and \ref{hyp:mt3_inf}, for $1/2 < \alpha \leqslant (1+\varepsilon)^{-1}$ and $y > 0$,
\begin{align}\label{eq:pi0}
\lim_{n \to \infty} \frac{1}{N_n^{2\alpha-1}} \log \Pi_{n,0} = - \frac{y^2}{2\sigma^2} .
\end{align}
\end{lem}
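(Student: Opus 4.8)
The plan is to bound $\Pi_{n,0}$ from above and below at the Gaussian rate. Set $b_n\defeq(N_n^\alpha y)^\varepsilon$, $m_n(\lambda)\defeq\Espe[e^{\lambda\rva{Y}{n}}\indic_{\rva{Y}{n}<b_n}]$, and pick the tilt $\lambda_n\defeq y\sigma^{-2}N_n^{\alpha-1}$. Everything hinges on the estimate
\[
m_n(\lambda_n)=1+\frac{\lambda_n^2\sigma^2}{2}\bigl(1+o(1)\bigr),
\]
from which both bounds follow by Cramér-type arguments. For the upper bound, $\indic_{\{\rva{T}{n}\ge N_n^\alpha y\}}\le e^{\lambda_n(\rva{T}{n}-N_n^\alpha y)}$ together with the restriction $\{\forall i,\ \rva[i]{Y}{n}<b_n\}$ and independence give $\Pi_{n,0}\le e^{-\lambda_n N_n^\alpha y}m_n(\lambda_n)^{N_n}$.

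To prove the estimate of $m_n(\lambda_n)$, I would fix a small $c\in\intervalleoo{0}{1}$ and split at $A_n\defeq cN_n^{\alpha\varepsilon}$, which lies in the range of \ref{hyp:tails}:
\[
m_n(\lambda_n)=\Espe\bigl[e^{\lambda_n\rva{Y}{n}}\indic_{\rva{Y}{n}<A_n}\bigr]+\Espe\bigl[e^{\lambda_n\rva{Y}{n}}\indic_{A_n\le\rva{Y}{n}<b_n}\bigr].
\]
On $\{\rva{Y}{n}<A_n\}$ the exponent is at most $\lambda_n A_n=cy\sigma^{-2}N_n^{\alpha(1+\varepsilon)-1}=O(c)$, so a second-order Taylor expansion with remainder controlled by $e^{\lambda_n A_n}$ and \ref{hyp:var_inf} bound this term by $1+\lambda_n\Espe[\rva{Y}{n}\indic_{\rva{Y}{n}<A_n}]+\frac{\lambda_n^2}{2}(1+O(c))\sigma^2(1+o(1))$. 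The centering term is negligible: since $\rva{Y}{n}$ is centered, $\Espe[\rva{Y}{n}\indic_{\rva{Y}{n}<A_n}]=-\Espe[\rva{Y}{n}\indic_{\rva{Y}{n}\ge A_n}]$, and integrating by parts, using \ref{hyp:tails} on $\intervalleff{A_n}{N_n^\alpha}$ (a super-polynomially small contribution) and Chebyshev with \ref{hyp:var_inf} on $\intervallefo{N_n^\alpha}{\infty}$, one gets $\Espe[\rva{Y}{n}\indic_{\rva{Y}{n}\ge A_n}]=O(N_n^{-\alpha})$, hence $N_n\lambda_n\Espe[\rva{Y}{n}\indic_{\rva{Y}{n}<A_n}]=O(1)=o(N_n^{2\alpha-1})$ precisely because $\alpha>1/2$. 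The second expectation is treated exactly as in the proof of Theorem \ref{thm:nagaev_weak_array_mob_eps}: integrating by parts and invoking Lemma \ref{lem:tail}, the integrand $e^{\lambda_n u-q'u^{1-\varepsilon}}$ is decreasing on $\intervalleff{A_n}{b_n}$ (its critical point lies beyond $b_n$ since $\alpha<(1+\varepsilon^2)^{-1}$), so this term is at most $e^{-q'c^{1-\varepsilon}N_n^{\alpha\varepsilon(1-\varepsilon)}}$ up to polynomial factors, negligible at every polynomial scale. A matching lower bound on $m_n(\lambda_n)$ follows the same lines; letting $c\to0$ after $n\to\infty$ yields the two-sided estimate. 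Feeding it into the Markov bound gives $N_n^{-(2\alpha-1)}\log\Pi_{n,0}\le-y^2/\sigma^2+\tfrac{y^2}{2\sigma^2}(1+O(c))+o(1)$, i.e.\ $\limsup\le-y^2/(2\sigma^2)$.

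For the lower bound I would tilt. Let $\widehat\Prob_n$ make the $\rva[i]{Y}{n}$ i.i.d.\ with law $\widehat F_n(\mathrm dy)=m_n(\lambda_n)^{-1}e^{\lambda_n y}\indic_{y<b_n}\Prob(\rva{Y}{n}\in\mathrm dy)$, so that, writing $\rva{\widetilde T}{n}=\sum_{i=1}^{N_n}\rva[i]{Y}{n}$ (which is coordinatewise $<b_n$ under $\widehat\Prob_n$), the change of variables reads $\Pi_{n,0}=m_n(\lambda_n)^{N_n}\widehat\Espe_n[e^{-\lambda_n\rva{\widetilde T}{n}}\indic_{\rva{\widetilde T}{n}\ge N_n^\alpha y}]$. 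I choose $\lambda_n$ so that the tilted mean satisfies $N_n\widehat\Espe_n[\rva[1]{Y}{n}]=N_n^\alpha y$ (possible since $\lambda\mapsto m_n'(\lambda)/m_n(\lambda)$ is continuous increasing and, by the above, the solution is $\sim y\sigma^{-2}N_n^{\alpha-1}$, the inequality $N_n^{\alpha-1}\gg N_n^{-\alpha}$ needed here being again $\alpha>1/2$), and I restrict the expectation to the slab $\intervalleff{N_n^\alpha y}{N_n^\alpha y+\Delta_n}$ with $\Delta_n\defeq\sigma\sqrt{N_n}$, whence
\[
\Pi_{n,0}\ge m_n(\lambda_n)^{N_n}e^{-\lambda_n(N_n^\alpha y+\Delta_n)}\widehat\Prob_n\bigl(N_n^\alpha y\le\rva{\widetilde T}{n}\le N_n^\alpha y+\Delta_n\bigr).
\]
Under $\widehat\Prob_n$ the array $(\rva[i]{Y}{n})$ has variance tending to $\sigma^2$ and satisfies a Lyapunov condition thanks to \ref{hyp:mt3_inf}, so the central limit theorem makes the slab probability bounded below by a positive constant. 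Since $\lambda_n N_n^\alpha y=(y^2/\sigma^2)N_n^{2\alpha-1}$, $N_n\log m_n(\lambda_n)=(y^2/2\sigma^2)N_n^{2\alpha-1}(1+o(1))$, and $\lambda_n\Delta_n=(y/\sigma)N_n^{\alpha-1/2}=o(N_n^{2\alpha-1})$ (again $\alpha>1/2$), dividing by $N_n^{2\alpha-1}$ gives $\liminf\ge-y^2/(2\sigma^2)$, which together with the upper bound proves \eqref{eq:pi0}.

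The main obstacle is the tightness of the Laplace-transform estimate at the endpoint $\alpha=(1+\varepsilon)^{-1}$, where $\lambda_n b_n=O(1)$ does not vanish: a crude remainder bound over the whole window $\intervalleff{0}{b_n}$ then produces a spurious constant $>1$ in front of $\sigma^2$, hence the wrong rate. Resolving this forces genuine use of the semiexponential tail \ref{hyp:tails} to show that the mass of $\rva{Y}{n}$ near $b_n\asymp N_n^{\alpha\varepsilon}$ is super-polynomially small, confining the Gaussian approximation to the shrinking window $\{\rva{Y}{n}<cN_n^{\alpha\varepsilon}\}$ where it holds up to the factor $1+O(c)$ removed by letting $c\to0$. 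A secondary point is verifying the Lyapunov condition for the tilted triangular array in the lower bound.
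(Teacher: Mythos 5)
Your proof is correct in substance but follows a genuinely different route from the paper's. The paper factorizes $\Pi_{n,0} = \Prob(\rva{\Tinf}{n} \geqslant N_n^\alpha y)\,\Prob(\rva{Y}{n} < (N_n^\alpha y)^\varepsilon)^{N_n}$, where $\rva{\Yinf}{n}$ has the conditional law $\Loi(\rva{Y}{n} \mid \rva{Y}{n} < (N_n^\alpha y)^\varepsilon)$, and then invokes the unilateral Gärtner-Ellis theorem at speed $N_n^{2\alpha-1}$, the whole work being the limit $\Lambda_n(u) \to u^2\sigma^2/2$ of the normalized log-Laplace transform. You instead prove the two bounds by hand: Chernoff with the explicit tilt $\lambda_n = y\sigma^{-2}N_n^{\alpha-1}$ for the upper bound, and an exponential change of measure plus a Lyapunov CLT for the lower bound --- in effect reproving the relevant case of Gärtner-Ellis. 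The central estimate, $m_n(\lambda_n) = 1 + \lambda_n^2\sigma^2(1+o(1))/2$, is the same in both proofs; where you differ is in how the truncated Laplace transform is expanded. The paper uses the single inequality $\abs{e^t - (1+t+t^2/2)} \leqslant c\abs{t}^{2+\gamma}$, valid for all $t \leqslant uy^\varepsilon$, which covers the whole range $\rva{\Yinf}{n} < (N_n^\alpha y)^\varepsilon$ precisely because $\alpha(1+\varepsilon)\leqslant 1$; hence \ref{hyp:mt3_inf} kills the remainder in one stroke and no extra split at $cN_n^{\alpha\varepsilon}$ is needed. Your ``main obstacle'' at $\alpha=(1+\varepsilon)^{-1}$ is thus resolved by the paper without the $c\to 0$ device, though your device also works. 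What your approach buys is independence from the Gärtner-Ellis black box; what the paper's buys is brevity and a remainder estimate done once.

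One point to tighten: for the lower bound on $m_n(\lambda_n)$ you say it ``follows the same lines,'' but the Lagrange-remainder bound $e^t \geqslant 1+t+\tfrac{t^2}{2}e^{\min(t,0)}$ is useless for large negative $t$ (there is no lower truncation of $\rva{Y}{n}$), and $e^t\geqslant 1+t$ alone only gives $m_n(\lambda_n)\geqslant 1+O(N_n^{-1})$, which is not enough since you need $N_n\log m_n(\lambda_n)\geqslant \tfrac{y^2}{2\sigma^2}N_n^{2\alpha-1}(1-o(1))$. You must use the two-sided bound $\abs{e^t-(1+t+t^2/2)}\leqslant c\abs{t}^{2+\gamma}$ (valid for $t$ bounded above) together with \ref{hyp:mt3_inf}, exactly as the paper does in \eqref{eq:taylor2}. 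This is a one-line fix, not a structural gap; the Lyapunov condition for the tilted array, which you rightly flag, also goes through since the tilting factor is bounded by $e^{\lambda_n(N_n^\alpha y)^\varepsilon}=O(1)$, so the tilted $(2+\gamma)$-moments inherit the bound of \ref{hyp:mt3_inf}.
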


\begin{proof}[Proof of Lemma \ref{lem:pi0}] For all $n\geqslant 1$, we introduce the variable $\rva{\Yinf}{n}$ distributed as $\mathcal{L}(\rva{Y}{n}\ |\ \rva{Y}{n} < (N_n^{\alpha}y)^{\varepsilon})$. Let $\rva{\Tinf}{n}=\sum_{i=1}^{N_n} \rva[i]{\Yinf}{n}$ where the $\rva[i]{\Yinf}{n}$ are independent random variables distributed as $\rva{\Yinf}{n}$. Then
\begin{align*}
\Pi_{n,0}=\Prob(\rva{\Tinf}{n} \geqslant N_n^\alpha y) \Prob(\rva{Y}{n}<(N_n^\alpha y)^{\varepsilon})^{N_n} .
\end{align*}
On the one hand, $\Prob(\rva{Y}{n}<(N_n^\alpha y)^{\varepsilon})^{N_n} \to 1$ by \ref{hyp:tails}. On the other hand, in order to apply the unilateral version of Gärtner-Ellis theorem (\emph{see} \cite{PS75}, and \cite{FATP2020} for a modern formulation), we compute, for $u\geqslant 0$, 
\[
\Lambda_n(u)=N_n^{2(1-\alpha)} \log \Espe \left[e^{\frac{u}{N_n^{1-\alpha}}\rva{\Yinf}{n}}\right].
\]

Now, there exists a constant $c>0$ such that, for all $t \leqslant uy^\varepsilon$, $\abs{e^t-(1+t+t^2/2)}\leqslant c\abs{t}^{2+\gamma}$ , whence
\begin{align}\label{eq:taylor2}
\abs{e^{\frac{u}{N_n^{1-\alpha}}\rva{\Yinf}{n}} - 1 - \frac{u}{N_n^{1-\alpha}}\rva{\Yinf}{n} - \frac{u^2}{2N_n^{2(1-\alpha)}}(\rva{\Yinf}{n})^2}
 \leqslant \frac{c u^{2+\gamma}}{N_n^{(2+\gamma)(1-\alpha)}}\abs{\rva{\Yinf}{n}}^{2+\gamma} ,
\end{align}
by the definition of $\rva{\Yinf}{n}$ and $\alpha(1+\varepsilon)\leqslant 1$. Now,
\begin{align}
\left| \Espe \left[e^{\frac{u}{N_n^{1-\alpha}}\rva{\Yinf}{n}} \right] - e^{\frac{u^2 \sigma^2}{2 N_n^{2(1-\alpha)}}} \right|
 & \leqslant \left| \Espe \left[e^{\frac{u}{N_n^{1-\alpha}}\rva{\Yinf}{n}} \right] - \Espe \left[ 1 + \frac{u}{N_n^{1-\alpha}}\rva{\Yinf}{n} + \frac{u^2}{2N_n^{2(1-\alpha)}}(\rva{\Yinf}{n})^2 \right] \right| \nonumber \\
 & \hspace{1cm} + \left| \Espe \left[1 + \frac{u}{N_n^{1-\alpha}}\rva{\Yinf}{n} + \frac{u^2}{2N_n^{2(1-\alpha)}}(\rva{\Yinf}{n})^2 \right] - e^{\frac{u^2 \sigma^2}{2 N_n^{2(1-\alpha)}}} \right|. \label{eq:gaussian_57}
\end{align}
The first term of \eqref{eq:gaussian_57} is bounded above by
\begin{align}
\frac{c u^{2+\gamma}}{N_n^{(2+\gamma)(1-\alpha)}}\Espe[\abs{\rva{\Yinf}{n}}^{2+\gamma}] = o(N_n^{-2(1-\alpha)}), \label{eq:gaussian_2}
\end{align}
by assumptions \ref{hyp:tails} and \ref{hyp:mt3_inf}, and an integration by parts. Using a Taylor expansion of order $2$ of the exponential function, the second term of \eqref{eq:gaussian_57} is equal to
\begin{align}
\left| \frac{u}{N_n^{1-\alpha}} \Espe[\rva{\Yinf}{n}] + \frac{u^2}{2N_n^{2(1-\alpha)}} (\Espe[(\rva{\Yinf}{n})^2]-\sigma^2) + o(N_n^{-2(1-\alpha}) \right|  .
\end{align}
By \ref{hyp:tails} and the fact that $\Espe[\rva{Y}{n}]=0$, $\Espe[\rva{\Yinf}{n}]$ is exponentially decreasing, whence $\Espe[\rva{\Yinf}{n}]=o(1/N_n^{1-\alpha})$; similarly, by \ref{hyp:tails}, \ref{hyp:var_inf}, and  \ref{hyp:mt3_inf}, $\Espe[(\rva{\Yinf}{n})^2]\to \sigma^2$; hence, we get
%
%
%
\begin{align*}
\Lambda_n(u) = \frac{u^2\sigma^2}{2} + o(1) ,
\end{align*}
and the proof of Lemma \ref{lem:pi0} is complete.
\end{proof}

Theorem \ref{thm:nagaev_weak_array_mob_eps2} stems from \eqref{eq:gauss_Rn0}, \eqref{eq:pi0} and the fact that, for $1/2 < \alpha < (1+\varepsilon)^{-1}$,
\begin{equation} \label{eq:pi_sum}
\limsup_{n\to \infty} \frac{1}{N_n^{2\alpha-1}} \log \sum_{m=1}^{N_n} \binom{N_n}{m} \Pi_{n,m}
 \leqslant - \frac{y^2}{2\sigma^2} ,
\end{equation}
the proof of which is given now.
We adapt the proof in \cite[Lemma 5]{Nagaev69-2} and focus on the logarithmic scale. Let $m_n=\lceil N_n^{\alpha (1-\varepsilon)^2 2y^{(1-\varepsilon)^2}}\rceil$. In particular, for all $m > m_n$,
\begin{align}\label{eq:m_n}
m(N_n^{\alpha}y)^{\varepsilon(1-\varepsilon)}\geqslant \frac{m(N_n^{\alpha}y)^{\varepsilon(1-\varepsilon)}}{2} + (N_n^{\alpha}y)^{1-\varepsilon}.
\end{align}

\begin{lem} \label{lem:pinm_gaussian}
Under \ref{hyp:tails}, for $\alpha > 1/2$, $y > 0$, and $q' < q$,
\[
\limsup_{n \to \infty} \frac{1}{N_n^{\alpha(1-\varepsilon)}} \log \sum_{m=m_n+1}^{N_n} \binom{N_n}{m} \Pi_{n,m}
 \leqslant -q'y^{1-\varepsilon} .
\]
\end{lem}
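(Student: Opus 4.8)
The plan is to collapse the entire sum into a geometric series in $m$ whose leading term carries the full weight $e^{-q(N_n^\alpha y)^{1-\varepsilon}}$, the threshold $m_n$ being precisely calibrated for this. First I would bound each $\Pi_{n,m}$ by discarding the event $\{\rva{T}{n}\geqslant N_n^\alpha y\}$ and the $N_n-m$ constraints on the small variables (each contributing a factor at most $1$), retaining only the $m$ independent medium jumps:
\[
\Pi_{n,m}\leqslant \Prob\bigl((N_n^\alpha y)^\varepsilon \leqslant \rva{Y}{n} < N_n^\alpha y\bigr)^m \leqslant \Prob\bigl(\rva{Y}{n}\geqslant (N_n^\alpha y)^\varepsilon\bigr)^m .
\]
The evaluation point $(N_n^\alpha y)^\varepsilon = y^\varepsilon N_n^{\alpha\varepsilon}$ sits exactly at the lower edge $N_n^{\alpha\varepsilon}\preccurlyeq\cdot\preccurlyeq N_n^\alpha$ of the range in which \ref{hyp:tails} is valid, so \ref{hyp:tails} gives $\log\Prob(\rva{Y}{n}\geqslant (N_n^\alpha y)^\varepsilon)\sim -q(N_n^\alpha y)^{\varepsilon(1-\varepsilon)}$. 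Fixing $q''\in\intervalleoo{q'}{q}$ and writing $b_n\defeq (N_n^\alpha y)^{\varepsilon(1-\varepsilon)}$, this produces the uniform-in-$m$ bound $\Pi_{n,m}\leqslant e^{-q'' m b_n}$ for $n$ large.

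Next I would feed in the crude $\binom{N_n}{m}\leqslant N_n^m$, so that $\binom{N_n}{m}\Pi_{n,m}\leqslant e^{m(\log N_n - q'' b_n)}$. Since $b_n$ is a fixed positive power of $N_n$, it swamps $\log N_n$; picking an auxiliary $\tilde q\in\intervalleoo{q'}{q''}$, we have $\log N_n \leqslant (q''-\tilde q)\,b_n$ for $n$ large, hence $\binom{N_n}{m}\Pi_{n,m}\leqslant e^{-\tilde q\, m b_n}$. The decisive step is to split this exponent using the defining inequality \eqref{eq:m_n} of $m_n$: for every $m>m_n$, $m b_n\geqslant \tfrac12 m b_n + (N_n^\alpha y)^{1-\varepsilon}$, whence
\[
\binom{N_n}{m}\Pi_{n,m}\leqslant e^{-\tilde q\,(N_n^\alpha y)^{1-\varepsilon}}\; e^{-\frac{\tilde q}{2}\,m b_n}.
\]
The first factor no longer depends on $m$ and is the desired leading term; the second makes the series summable.

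Summing over $m$ from $m_n+1$ to $N_n$, the residual geometric series $\sum_{m>m_n} e^{-\frac{\tilde q}{2}m b_n}$ is bounded by $(1-e^{-\frac{\tilde q}{2}b_n})^{-1}$, a quantity tending to $1$. Therefore $\sum_{m=m_n+1}^{N_n}\binom{N_n}{m}\Pi_{n,m}\leqslant C\,e^{-\tilde q(N_n^\alpha y)^{1-\varepsilon}}$ for $n$ large, and dividing by $N_n^{\alpha(1-\varepsilon)}$ and taking logarithms gives $\limsup \leqslant -\tilde q\, y^{1-\varepsilon}$ upon recalling $(N_n^\alpha y)^{1-\varepsilon}=y^{1-\varepsilon}N_n^{\alpha(1-\varepsilon)}$; since $\tilde q>q'$ this is at most $-q' y^{1-\varepsilon}$. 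The only point requiring care, and the reason the threshold $m_n$ is chosen as it is, is to recover the \emph{full} constant $q$ (up to the arbitrarily small losses $q\to q''\to\tilde q$) in front of $(N_n^\alpha y)^{1-\varepsilon}$ rather than a fraction of it: inequality \eqref{eq:m_n} is exactly the bookkeeping that guarantees each surplus medium jump beyond $m_n$ is summable while the first $(N_n^\alpha y)^{1-\varepsilon}$ of tail budget is set aside as the leading factor.
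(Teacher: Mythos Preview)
Your proof is correct and follows essentially the same strategy as the paper: bound $\Pi_{n,m}$ by the $m$th power of the tail probability at $(N_n^\alpha y)^\varepsilon$, invoke \eqref{eq:m_n} to extract the leading factor $e^{-\tilde q(N_n^\alpha y)^{1-\varepsilon}}$, and sum the remainder. The only cosmetic difference is that the paper keeps $\binom{N_n}{m}$ and closes the sum via the binomial theorem as $\bigl(1+e^{-q' b_n/2}\bigr)^{N_n}\to 1$, whereas you use the crude bound $\binom{N_n}{m}\leqslant N_n^m$ and an auxiliary constant $\tilde q\in(q',q'')$ to absorb the polynomial factor---both equally valid.
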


\begin{proof}
For $n$ large enough, using Lemma \ref{lem:tail} and inequality \eqref{eq:m_n},
\begin{align*}
\sum_{m=m_n+1}^{N_n} \binom{N_n}{m}\Pi_{n,m}
&\leqslant \sum_{m=m_n+1}^{N_n} \binom{N_n}{m}\Prob(\forall i\in \intervallentff{1}{m} \quad \rva[i]{Y}{n}\geqslant (N_n^{\alpha}y)^{\varepsilon}) \\
&\leqslant \sum_{m=m_n+1}^{N_n} \binom{N_n}{m}e^{-mq' (N_n^{\alpha}y)^{\varepsilon(1-\varepsilon)}} \\
&\leqslant e^{-q'(N_n^{\alpha}y)^{1-\varepsilon}} 
\sum_{m=m_n+1}^{N_n} \binom{N_n}{m}e^{-mq' (N_n^{\alpha}y)^{\varepsilon(1-\varepsilon)}/2}  \\
&\leqslant e^{-q'(N_n^{\alpha}y)^{1-\varepsilon}} 
\Big(1+e^{-q' (N_n^{\alpha}y)^{\varepsilon(1-\varepsilon)}/2}\Big)^{N_n} .
\end{align*}
\end{proof}

Here, as $\alpha (1-\varepsilon) > 2\alpha-1$, we conclude that
\begin{align}\label{eq:pinmgrand}
\lim_{n\to \infty} \frac{1}{N_n^{2\alpha-1}}\log \sum_{m=m_n+1}^{N_n} \binom{N_n}{m} \Pi_{n,m}
=-\infty.
\end{align}

Now, for $m\in\intervallentff{1}{m_n}$, let us bound above $\Pi_{n,m}$. Let us define
\[
f_m(u_1,\dots,u_m) \defeq \Pi_{N_n-m,0}\biggl( N_n^\alpha y-\sum_{i=1}^m u_i \biggr) ,
\]
which is nondecreasing in each variable. For $q''<q'<q$ and $n$ large enough,
\begin{align*}
\Pi_{n,m}
& = \Prob(\rva{T}{n}\geqslant N_n^\alpha y \, , \, (N_n^\alpha y)^{\varepsilon}\leqslant \rva[1]{Y}{n},\dots,\rva[m]{Y}{n} < N_n^\alpha y \,,\, \rva[m+1]{Y}{n},\dots,\rva[n]{Y}{n}< (N_n^\alpha y)^{\varepsilon}) \\
&= \int_{[(N_n^\alpha y)^{\varepsilon},N_n^\alpha y]^m} f_m(u_1,\dots,u_m)\, d \Prob_{\rva{Y}{n}}(u_1)\cdots d\Prob_{\rva{Y}{n}}(u_m) \\
&= \sum_{(k_1,\dots,k_m)} \int_{\prod_i \intervalleof{k_i-1}{k_i}} f_m(u_1,\dots,u_m)\, d \Prob_{\rva{Y}{n}}(u_1)\cdots d\Prob_{\rva{Y}{n}}(u_m) \\
&\leqslant \sum_{(k_1,\dots,k_m)} f_m(k_1,\dots,k_m) \prod_i  \Prob(\rva{Y}{n}\in \intervalleof{k_i-1}{k_i})\nonumber\\
&\leqslant \sum_{(k_1,\dots,k_m)} f_m(k_1,\dots,k_m) e^{-q'\sum_{i=1}^m (k_i-1)^{1-\varepsilon}} \\
&\leqslant \int_{[(N_n^\alpha y)^{\varepsilon},N_n^\alpha y+2]^m} f_m(u_1,\dots,u_m)\, e^{-q''\sum_{i=1}^m u_i^{1-\varepsilon}}\,du_1\cdots du_m \\
&= I_{1,m}+I_{2,m} ,
\end{align*}
where, for $j\in\{1,2\}$, 
\begin{align}\label{def:Im}
I_{j,m} & \defeq \int_{A_{j,m}} f_m(u_1,\dots,u_m)\, e^{-q'' s_m(u_1, \dots, u_m)}\,du_1\cdots du_m
\end{align}
with
\begin{align*}
A_{1,m}&\defeq \enstq{(u_1,\dots,u_m)\in\intervalleff{(N_n^\alpha y)^{\varepsilon}}{N_n^\alpha y+2}^m}{\sum_{i=1}^m u_i \geqslant N_n^\alpha y},\\
A_{2,m}&\defeq\enstq{(u_1,\dots,u_m)\in\intervalleff{(N_n^\alpha y)^{\varepsilon}}{N_n^\alpha y+2}^m}{\sum_{i=1}^m u_i < N_n^\alpha y},
\end{align*}
and
\[
s_m(u_1, \dots, u_m) \defeq \sum_{i=1}^m u_i^{1-\varepsilon} .
\]

\begin{lem} \label{lem:I1m_gaussian}
For $\alpha > 1/2$ and $y > 0$,
\begin{align*}
\limsup_{n \to \infty} \frac{1}{N_n^{\alpha(1-\varepsilon)}} \log \sum_{m=1}^{m_n} \binom{N_n}{m} I_{1,m}
 \leqslant -q'' y^{1-\varepsilon} .
\end{align*}
\end{lem}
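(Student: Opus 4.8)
The plan is to estimate each $I_{1,m}$ using two features of $A_{1,m}$: the global constraint $\sum_{i=1}^m u_i \geqslant N_n^\alpha y$ and the coordinatewise lower bound $u_i \geqslant (N_n^\alpha y)^\varepsilon$. First, since $f_m(u_1,\dots,u_m)=\Pi_{N_n-m,0}(N_n^\alpha y-\sum_i u_i)$ is a probability whose argument is nonpositive on $A_{1,m}$, one has $f_m\leqslant 1$ there, so that $I_{1,m}\leqslant\int_{A_{1,m}}e^{-q''s_m(u_1,\dots,u_m)}\,du_1\cdots du_m$.

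Second, I would extract the leading factor from the subadditivity $(\sum_i u_i)^{1-\varepsilon}\leqslant\sum_i u_i^{1-\varepsilon}$ of $t\mapsto t^{1-\varepsilon}$ (valid since $\varepsilon\in\intervalleoo{0}{1}$): on $A_{1,m}$ one gets $s_m(u_1,\dots,u_m)=\sum_{i=1}^m u_i^{1-\varepsilon}\geqslant(\sum_{i=1}^m u_i)^{1-\varepsilon}\geqslant(N_n^\alpha y)^{1-\varepsilon}$, which yields the desired weight $e^{-q''(N_n^\alpha y)^{1-\varepsilon}}$. The difficulty — and the main obstacle — is that bounding the volume of $A_{1,m}\subseteq\intervalleff{(N_n^\alpha y)^\varepsilon}{N_n^\alpha y+2}^m$ crudely by $(N_n^\alpha y+2)^m$ and summing against $\binom{N_n}{m}$ would produce a factor of order $(N_n^\alpha y+3)^{N_n}$, whose logarithm is of order $N_n\log N_n$, far larger than $(N_n^\alpha y)^{1-\varepsilon}$ (of order $N_n^{\alpha(1-\varepsilon)}$) because $\alpha<1$. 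One must therefore retain part of the exponential weight to absorb the combinatorial sum.

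Concretely, I would fix $\eta\in\intervalleoo{0}{1}$ and split $e^{-q''s_m}=e^{-q''(1-\eta)s_m}\,e^{-q''\eta s_m}$, bounding the first factor by $e^{-q''(1-\eta)(N_n^\alpha y)^{1-\varepsilon}}$ as above, and the second by $e^{-q''\eta m(N_n^\alpha y)^{\varepsilon(1-\varepsilon)}}$ via $u_i^{1-\varepsilon}\geqslant(N_n^\alpha y)^{\varepsilon(1-\varepsilon)}$. With $\binom{N_n}{m}\leqslant N_n^m/m!$ and the volume bound, this gives
\[
\sum_{m=1}^{m_n}\binom{N_n}{m}I_{1,m}\leqslant e^{-q''(1-\eta)(N_n^\alpha y)^{1-\varepsilon}}\sum_{m\geqslant 1}\frac{c_n^m}{m!}\leqslant e^{-q''(1-\eta)(N_n^\alpha y)^{1-\varepsilon}}\bigl(e^{c_n}-1\bigr),
\]
where $c_n=N_n(N_n^\alpha y+2)\,e^{-q''\eta(N_n^\alpha y)^{\varepsilon(1-\varepsilon)}}$.

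Finally, the stretched-exponential factor dominates the polynomial prefactor, so $c_n\to0$ and $\log(e^{c_n}-1)\sim\log c_n\sim-q''\eta(N_n^\alpha y)^{\varepsilon(1-\varepsilon)}$, which is $o(N_n^{\alpha(1-\varepsilon)})$ since $\varepsilon(1-\varepsilon)<1-\varepsilon$. Dividing the displayed bound by $N_n^{\alpha(1-\varepsilon)}$ and using $(N_n^\alpha y)^{1-\varepsilon}=N_n^{\alpha(1-\varepsilon)}y^{1-\varepsilon}$, I obtain $\limsup_n N_n^{-\alpha(1-\varepsilon)}\log\sum_{m=1}^{m_n}\binom{N_n}{m}I_{1,m}\leqslant-q''(1-\eta)y^{1-\varepsilon}$; letting $\eta\to0$ finishes the proof. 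Apart from the combinatorial control, the only point to verify is that $c_n\to0$, which reduces to the elementary comparison of $e^{-\mathrm{const}\cdot N_n^{\alpha\varepsilon(1-\varepsilon)}}$ with the polynomial $N_n^{1+\alpha}$.
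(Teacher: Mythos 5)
Your proof is correct. It follows the same overall skeleton as the paper's argument: bound $f_m \leqslant 1$ on $A_{1,m}$, lower-bound $s_m$ there, majorize the integral by volume times supremum of the weight, and absorb the binomial sum thanks to the per-coordinate penalty $(N_n^{\alpha} y)^{\varepsilon(1-\varepsilon)}$. The one genuine difference is the device used to lower-bound $s_m$ on $A_{1,m}$. The paper exploits the concavity of $s_m$ to locate its exact minimum at a vertex of the polytope (all coordinates equal to $(N_n^{\alpha} y)^{\varepsilon}$ except one), which yields simultaneously the full leading term $(N_n^{\alpha} y)^{1-\varepsilon}$ (up to an additive $m-1$) and the full penalty $(m-1)(N_n^{\alpha} y)^{\varepsilon(1-\varepsilon)}$, hence the stated bound with $q''$ directly. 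You instead combine the subadditivity of $t \mapsto t^{1-\varepsilon}$ (giving $s_m \geqslant (N_n^{\alpha} y)^{1-\varepsilon}$) with the coordinatewise bound (giving $s_m \geqslant m (N_n^{\alpha} y)^{\varepsilon(1-\varepsilon)}$) through an $\eta$-weighted splitting of the exponential; this is more elementary --- no minimization of a concave function over a polytope is needed --- at the price of a factor $1-\eta$ that must be removed at the end. Since $q''$ is itself an auxiliary constant smaller than $q$ that is sent to $q$ later in the main proofs, this extra limit is harmless. Your identification of the real obstacle (the combinatorial factor $\binom{N_n}{m}$ times the volume, whose logarithm is of order $N_n \log N_n \gg N_n^{\alpha(1-\varepsilon)}$) and its resolution via $c_n \to 0$, which reduces to $\varepsilon(1-\varepsilon) > 0$ and $\varepsilon(1-\varepsilon) < 1-\varepsilon$, are exactly right.
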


\begin{proof}
Since $s_m$ is concave, $s_m$ reaches its minimum on $A_{1,m}$ at the points with all coordinates equal to $(N_n^\alpha y)^{\varepsilon}$ except one equal to  $N_n^\alpha y-(N_n^\alpha y)^{\varepsilon}(m-1)$. Moreover, using the fact that $f_m(u_1,\dots,u_m)
\leqslant 1$ in \eqref{def:Im}, it follows that, 
for $n$ large enough, for all $m \in \{ 1, \dots, m_n \}$,
\begin{align*}
I_{1,m}
 & \leqslant (N_n^\alpha y)^m e^{-q''(m-1)(N_n^\alpha y)^{\varepsilon(1-\varepsilon)}-q''(N_n^\alpha y-(m-1)(N_n^\alpha y)^{\varepsilon})^{1-\varepsilon}}\\
 & \leqslant(N_n^\alpha y)^m e^{-q''(N_n^\alpha y)^{1-\varepsilon}}e^{-q''(m-1)((N_n^\alpha y)^{\varepsilon(1-\varepsilon)}-1)}.
\end{align*}
Finally,
\begin{align*}
\sum_{m=1}^{m_n} \binom{N_n}{m} I_{1,m}
 & \leqslant e^{-q''(N_n^\alpha y)^{1-\varepsilon}} \sum_{m=1}^{m_n} \binom{N_n}{m} (N_n^\alpha y)^m e^{-q''(m-1)((N_n^\alpha y)^{\varepsilon(1-\varepsilon)}-1)} \\
 & \leqslant e^{-q''(N_n^\alpha y)^{1-\varepsilon}} N_n^{1+\alpha} y \sum_{m=1}^{m_n}  \left(N_n^{1+\alpha} y e^{-q''((N_n^\alpha y)^{\varepsilon(1-\varepsilon)}-1)}\right)^{m-1} , 
\end{align*}
and the conclusion follows, since the latter sum is bounded.
\end{proof}

As $\alpha(1-\varepsilon) > 2\alpha-1$, we conclude that
\begin{equation}
\label{eq:majgausssumI1m}
\lim_{n\to \infty} \frac{1}{N_n^{2\alpha-1}}\log \sum_{m=1}^{m_n} \binom{N_n}{m}I_{1,m}=-\infty. 
\end{equation}

\begin{lem} \label{lem:I2m}
Under \ref{hyp:var_inf} and \ref{hyp:mt3_inf}, for $1/2 < \alpha \leqslant (1+\varepsilon)^{-1}$, $y > 0$, $n$ large enough, and $m \in \{ 1, \dots, m_n \}$,
\[
I_{2,m}
 \leqslant (N_n^\alpha y)^m e^{- q''(m-1)(N_n^\alpha y)^{\varepsilon(1-\varepsilon)}} \exp \biggl( \sup_{m(N_n^\alpha y)^{\varepsilon} \leqslant u < N_n^\alpha y} \phi_m(u) \biggr)
\]
where
\[
\phi_m(u)
 \defeq -\frac{(N_n^\alpha y-u)^2}{2\sigma^2(N_n-m)(1+c_n)} - q'' (u-(m-1)(N_n^\alpha y)^{\varepsilon})^{1-\varepsilon} .
\]
\end{lem}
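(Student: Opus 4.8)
The plan is to bound the integrand of $I_{2,m}$ pointwise by a quantity that does not depend on $u_1,\dots,u_m$, and then multiply by the volume of $A_{2,m}$. Recall from the factorization used for $\Pi_{N_n-m,0}$ that
\[
f_m(u_1,\dots,u_m) = \Prob\biggl( \sum_{i=1}^{N_n-m}\rva[i]{\Yinf}{n} \geqslant N_n^\alpha y - \sum_{i=1}^m u_i \biggr) \Prob(\rva{Y}{n} < (N_n^\alpha y)^\varepsilon)^{N_n-m} \leqslant \Prob\biggl( \sum_{i=1}^{N_n-m}\rva[i]{\Yinf}{n} \geqslant N_n^\alpha y - \sum_{i=1}^m u_i \biggr),
\]
the last factor being at most $1$. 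On $A_{2,m}$ the argument $N_n^\alpha y - \sum_i u_i$ is positive, so the first step is to apply a Chernoff bound to this upper tail of a sum of $N_n-m$ independent copies of $\rva{\Yinf}{n}$.

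The key estimate is a sub-Gaussian bound on the Laplace transform of $\rva{\Yinf}{n}$. Reproducing the Taylor expansion \eqref{eq:taylor2} from Lemma \ref{lem:pi0}, and using that $\Espe[\rva{\Yinf}{n}]$ is exponentially small (since $\Espe[\rva{Y}{n}]=0$ and by \ref{hyp:tails}), that $\Espe[(\rva{\Yinf}{n})^2]\to\sigma^2$ (by \ref{hyp:var_inf}), and that the third-order remainder is controlled by \ref{hyp:mt3_inf}, I would exhibit a sequence $c_n \to 0$ such that $\Espe[e^{\lambda \rva{\Yinf}{n}}] \leqslant \exp\bigl( \tfrac12\sigma^2(1+c_n)\lambda^2 \bigr)$ uniformly over $0 \leqslant \lambda \leqslant C N_n^{\alpha-1}$ for a fixed $C$. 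Optimizing $\Prob(\sum_i\rva[i]{\Yinf}{n}\geqslant t) \leqslant e^{-\lambda t}\Espe[e^{\lambda \rva{\Yinf}{n}}]^{N_n-m}$ at $\lambda^\star = t/(\sigma^2(1+c_n)(N_n-m))$ then yields, for $t=N_n^\alpha y - \sum_i u_i$,
\[
f_m(u_1,\dots,u_m) \leqslant \exp\biggl( - \frac{(N_n^\alpha y - \sum_i u_i)^2}{2\sigma^2(1+c_n)(N_n-m)} \biggr).
\]
I expect this to be the main obstacle: one must check that the optimizer $\lambda^\star$ stays admissible, which is precisely where $\alpha \leqslant (1+\varepsilon)^{-1}$ enters, since $\rva{\Yinf}{n} \leqslant (N_n^\alpha y)^\varepsilon$ forces the control of $\lambda^\star (N_n^\alpha y)^\varepsilon = O(N_n^{\alpha(1+\varepsilon)-1})$, bounded exactly when $\alpha(1+\varepsilon)\leqslant 1$; one must also ensure $c_n\to 0$ uniformly in $\lambda$ and in $m\in\{1,\dots,m_n\}$.

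Next I would handle the weight $e^{-q'' s_m}$. As $t\mapsto t^{1-\varepsilon}$ is concave, $s_m$ is concave, so for fixed $S=\sum_i u_i$ its minimum over the polytope $\{u_i \geqslant (N_n^\alpha y)^\varepsilon,\ \sum_i u_i = S\}$ is attained at a vertex, i.e.\ with all coordinates equal to $(N_n^\alpha y)^\varepsilon$ but one, whence
\[
s_m(u_1,\dots,u_m) \geqslant (m-1)(N_n^\alpha y)^{\varepsilon(1-\varepsilon)} + \bigl( S - (m-1)(N_n^\alpha y)^\varepsilon \bigr)^{1-\varepsilon}.
\]
Combining this with the Gaussian bound on $f_m$ shows the integrand of $I_{2,m}$ is at most $e^{-q''(m-1)(N_n^\alpha y)^{\varepsilon(1-\varepsilon)}}\,e^{\phi_m(S)}$, hence at most $e^{-q''(m-1)(N_n^\alpha y)^{\varepsilon(1-\varepsilon)}}\exp\bigl(\sup_u \phi_m(u)\bigr)$ because $S \in [m(N_n^\alpha y)^\varepsilon, N_n^\alpha y)$ on $A_{2,m}$.

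Finally, the integrand being dominated by this constant, I would conclude with $I_{2,m} \leqslant (\text{constant})\cdot \mathrm{vol}(A_{2,m})$, and bound the volume by that of the simplex $\{u_i \geqslant 0,\ \sum_i u_i < N_n^\alpha y\}$, namely $(N_n^\alpha y)^m/m! \leqslant (N_n^\alpha y)^m$, which gives exactly the announced inequality for $n$ large enough.
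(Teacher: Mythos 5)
Your proposal is correct and follows essentially the same route as the paper: a Chernoff bound on $f_m=\Pi_{N_n-m,0}(N_n^\alpha y-\sum_i u_i)$ with the sub-Gaussian Laplace estimate $\Espe[e^{\lambda \rva{Y}{n}}\indic_{\rva{Y}{n}\leqslant (N_n^\alpha y)^\varepsilon}]\leqslant 1+\tfrac{\lambda^2\sigma^2}{2}(1+c_n)$ valid for $\lambda\leqslant M N_n^{\alpha-1}$ (where $\alpha(1+\varepsilon)\leqslant 1$ keeps the optimizer admissible, exactly as you identify), combined with the concavity of $s_m$ on the slices of fixed sum and the crude volume bound $(N_n^\alpha y)^m$. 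The only cosmetic differences are that the paper works with the truncated expectation directly rather than the conditional variable $\rva{\Yinf}{n}$ (where one should just note that $\Espe[\rva{Y}{n}\indic_{\rva{Y}{n}\leqslant (N_n^\alpha y)^\varepsilon}]\leqslant 0$, so the linear term can simply be dropped), and that it bounds the volume by that of the box rather than the simplex.
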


\begin{proof}
Here, we use Chebyshev's exponential inequality to control $f_m(u_1,\dots,u_m)=\Pi_{N_n-m,0}(N_n^\alpha y - u_1 - \dots - u_m)$ in $I_{2,m}$. For all $l \in \N^*$, for all $x \in \R$, and for all $\lambda \geqslant 0$,
\begin{align*}
\Pi_{l,0}(x)
&\leqslant  
\exp\left\{-\lambda x +l\log \Espe \left[e^{\lambda \rva{Y}{n}} \indic_{\rva{Y}{n} \leqslant (N_n^\alpha y)^{\varepsilon}}\right] \right\}.
\end{align*}

Let $M > y/\sigma^2$. There exists $c > 0$ such that, for all $s \leqslant M$, we have $e^s \leqslant 1 + s + s^2/2+c|s|^{2+\gamma}$. Hence, as soon as $\lambda \leqslant M/N_n^{1-\alpha} \leqslant M/N_n^{\alpha \epsilon}$,
\begin{align}
\Espe \left[ e^{\lambda \rva{Y}{n}} \indic_{\rva{Y}{n} \leqslant (N_n^\alpha y)^\varepsilon} \right]
%
%
 & \leqslant 1 + \frac{\lambda^2}{2} \Espe[\rva{Y}{n}^2] + c \lambda^{2+\gamma} \Espe[\abs{\rva{Y}{n}}^{2+\gamma}]
 \leqslant 1 + \frac{\lambda^2\sigma^2}{2}(1+c_n) , \label{eq:cheb_expo}
\end{align}
where
\[
c_n
 \defeq \Espe[\rva{Y}{n}^2] - \sigma^2 + \frac{2 c M^\gamma}{\sigma^2} N_n^{-\gamma(1-\alpha)} \Espe[\abs{\rva{Y}{n}}^{2+\gamma}]
 = o(1) ,
\]
by \ref{hyp:var_inf} and \ref{hyp:mt3_inf}. Thus, for $\lambda \leqslant M/N_n^{1-\alpha}$,
\[
\Pi_{N_n-m,0}(N_n^\alpha y - u) \leqslant \exp\biggl( -\lambda(N_n^\alpha y - u) + (N_n-m) \frac{\lambda^2 \sigma^2}{2} (1+c_n) \biggr) .
\]
For $n$ large enough and $m \in \{ 1, \dots, m_n \}$, the infimum in $\lambda$ of the last expression is attained at
\[
\lambda^* \defeq \frac{N_n^\alpha y-u}{(N_n-m)\sigma^2(1+c_n)} \leqslant \frac{M}{N_n^{1-\alpha}} ,
\]
and is equal to $-(N_n^\alpha y-u)^2/(2\sigma^2(N_n-m)(1+c_n))$. So, for $n$ large enough:
\begin{equation}
\label{majPin-mgauss}
\Pi_{N_n-m,0}(N_n^\alpha y-u) \leqslant \exp\biggl(-\frac{(N_n^\alpha y-u)^2}{2\sigma^2 (N_n-m)(1+c_n)} \biggr) .
\end{equation}
Since $s_m$ is concave, $s_m$ reaches its minimum on
\[
A_{2,m,u} \defeq \enstq{(u_1,\dots,u_m)\in [(N_n^\alpha y)^{\varepsilon},N_n^\alpha y+2]^m}{\sum_{i=1}^m u_i=u}
\]
at the points with all  coordinates equal to $(N_n^\alpha y)^{\varepsilon}$ except one equal to  $u-(m-1)(N_n^\alpha y)^{\varepsilon}$,
whence, for $n$ large enough and $m \in \{ 1, \dots, m_n \}$,
\begin{align*}
I_{2,m}
 & \leqslant (N_n^\alpha y)^m \sup_{m(N_n^\alpha y)^{\varepsilon}\leqslant u < N_n^\alpha y} \exp \biggl( - \frac{(N_n^\alpha y-u)^2}{2\sigma^2 (N_n-m)(1+c_n)} - q'' (u-(m-1)(N_n^\alpha y)^{\varepsilon})^{1-\varepsilon} \nonumber \\
 & \hspace{9cm} -q''(m-1)(N_n^\alpha y)^{\varepsilon(1-\varepsilon)} \biggr) \\
 & \leqslant (N_n^\alpha y)^m e^{- q''(m-1)(N_n^\alpha y)^{\varepsilon(1-\varepsilon)}} \exp \biggl( \sup_{m(N_n^\alpha y)^{\varepsilon} \leqslant u < N_n^\alpha y} \phi_m(u) \biggr) . \nonumber
\end{align*}    
\end{proof}



Now, for $1/2 < \alpha < (1+\varepsilon)^{-1}$, $n$ large enough, and $m \in \{ 1, \dots, m_n \}$, the function $\phi_m$ is decreasing on $\intervallefo{m(N_n^\alpha y)^{\varepsilon}}{N_n^\alpha y}$. So,
\begin{align*}
I_{2,m}
 & \leqslant (N_n^\alpha y)^m \exp\left(-q''m(N_n^\alpha y)^{\varepsilon(1-\varepsilon)}-\frac{(N_n^\alpha y-m_n(N_n^\alpha y)^{\varepsilon})^2}{2N_n\sigma^2(1+c_n)} \right).
\end{align*}

It follows that
\[
\sum_{m=1}^{m_n} \binom{N_n}{m} I_{2,m}
\leqslant e^{-\frac{(N_n^\alpha y-m_n(N_n^\alpha y)^{\varepsilon})^2}{2N_n\sigma^2(1+c_n)}} \sum_{m=1}^{m_n} \left(N_n^{1+\alpha} y e^{-q''(N_n^\alpha y)^{\varepsilon(1-\varepsilon)}}\right)^m.
\]
Since the latter sum is bounded, we get
\begin{align}
\label{eq:majgausssumI2m}
\limsup_{n\to \infty} \frac{1}{N_n^{2\alpha-1}}\log \sum_{m=1}^{m_n} \binom{N_n}{m}I_{2,m}
&\leqslant -\frac{y^2}{2\sigma^2}
\end{align}
as $m_n(N_n^\alpha y)^{\varepsilon}=o(N_n^\alpha)$ and $c_n=o(1)$. 
By \eqref{eq:pinmgrand}, \eqref{eq:majgausssumI1m}, and \eqref{eq:majgausssumI2m}, we get the required result.

\begin{rem}
Notice that, using the contraction principle, one can show that, for all fixed $m$,
\begin{align*}
\limsup_{n\to\infty} \frac{1}{N_n^{(1-\varepsilon)/(1+\varepsilon)}}\log \Pi_{n,m} = 
-\frac{y^2}{2\sigma^2}.
\end{align*}
\end{rem}

\subsection{Proof of Theorem \ref{thm:nagaev_weak_array_mob_eps_intermediate} (Transition)}

Here, we assume \ref{hyp:tails}, \ref{hyp:var_inf}, and \ref{hyp:mt3_inf}, and we deal with the case $\alpha = (1+\varepsilon)^{-1}$, so that $\alpha(1-\varepsilon) = 2\alpha-1 = (1-\varepsilon)/(1+\varepsilon)$. Let us fix $y > 0$. The result for $y=0$ follows by monotony. We still consider the decomposition \eqref{eq:decomp2}. By Lemmas \ref{lem2_weak_array_mob_v2} and \ref{lem:pi0}, and the very definition of $I$ in \eqref{eq:I_def}, we have
\[
\lim_{n \to \infty} \frac{1}{N_n^{(1-\varepsilon)/(1+\varepsilon)}} \log  R_{n,0} = -q y^{1-\varepsilon} \leqslant -I(y)
\]
and
\[
\lim_{n \to \infty} \frac{1}{N_n^{(1-\varepsilon)/(1+\varepsilon)}} \log  \Pi_{n,0} = -\frac{y^2}{2\sigma^2} \leqslant -I(y) .
\]
To complete the proof of Theorem \ref{thm:nagaev_weak_array_mob_eps_intermediate}, it remains to prove that
\begin{align}
& \liminf_{n\to\infty} \frac{1}{N_n^{(1-\varepsilon)/(1+\varepsilon)}} \log \Pi_{n,1} \geqslant -I(y) \label{eq:pi1} \\
& \limsup_{n\to\infty} \frac{1}{N_n^{(1-\varepsilon)/(1+\varepsilon)}} \log \sum_{m=1}^{N_n}\Pi_{n,m} \leqslant -I(y) \label{eq:pi_sum_intermediate}
\end{align}
and to apply the principle of the largest term.

\begin{proof}[Proof of \eqref{eq:pi1}]
For all $t \in \intervalleoo{0}{1}$,
\begin{align*}
\frac{1}{N_n^{(1-\varepsilon)/(1+\varepsilon)}} \log \Pi_{n,1}
 & \geqslant \frac{1}{N_n^{(1-\varepsilon)/(1+\varepsilon)}} \log \Prob\bigl(  \rva[N_n-1]{T}{n} \geqslant N_n^\alpha ty,\ \forall i \in \intervallentff{1}{N_n-1} \quad \rva[i]{Y}{n} < (N_n^\alpha y)^\varepsilon \bigr) \\
 & \hspace{2cm} + \frac{1}{N_n^{(1-\varepsilon)/(1+\varepsilon)}} \log \Prob\bigl(N_n^\alpha (1-t)y \leqslant \rva[N_n]{Y}{n} < N_n^\alpha y \bigr) \\
 & \xrightarrow[n \to \infty]{} - \frac{t^2y^2}{2\sigma^2} - q(1-t)^{1-\varepsilon} y^{1-\varepsilon} ,
\end{align*}
by Lemma \ref{lem:pi0} and by \ref{hyp:tails}. Optimizing in $t \in \intervalleoo{0}{1}$ provides the conclusion.
\end{proof}

\begin{proof}[Proof of \eqref{eq:pi_sum_intermediate}]
We follow the same lines as in the proof of \eqref{eq:pi_sum}. By Lemma \ref{lem:pinm_gaussian}, letting $q'\to q$, we get
\begin{align} \label{eq:pinmgrand_intermediate}
\limsup_{n\to \infty} \frac{1}{N_n^{(1-\varepsilon)/(1+\varepsilon)}} \log \sum_{m=m_n+1}^{N_n} \binom{N_n}{m} \Pi_{n,m}
 \leqslant -q y^{1-\varepsilon}
 \leqslant -I(y) .
\end{align}
Let $\eta = 1-q''/q \in \intervalleoo{0}{1}$. By Lemma \ref{lem:I1m_gaussian},
\begin{align} \label{eq:majgausssumI1m_intermediate}
\limsup_{n\to \infty} \frac{1}{N_n^{(1-\varepsilon)/(1+\varepsilon)}} \log \sum_{m=1}^{m_n} \binom{N_n}{m} I_{1,m}
 \leqslant - q'' y^{1-\varepsilon}
 \leqslant - (1-\eta) I(y) .
\end{align}
Now, recall that Lemma \ref{lem:I2m} provides $I_{2,m} \leqslant (N_n^\alpha y)^m e^{- q''(m-1)(N_n^\alpha y)^{\varepsilon(1-\varepsilon)}} e^{M_n}$, for $n$ large enough, where
\begin{align*}
M_n
 & = \sup_{m(N_n^\alpha y)^{\varepsilon} \leqslant u < N_n^\alpha y} \biggl( -\frac{(N_n^\alpha y-u)^2}{2\sigma^2(N_n-m)(1+c_n)} - q'' (u-(m-1)(N_n^\alpha y)^{\varepsilon})^{1-\varepsilon} \biggr) \\
 & \leqslant N_n^{(1-\varepsilon)/(1+\varepsilon)} \sup_{m(N_n^\alpha y)^{-1+\varepsilon} \leqslant \theta < 1} \biggl( -\frac{(1-\theta)^2 y^2}{2\sigma^2 (1+c_n)} - q'' \theta^{1-\varepsilon} y^{1-\varepsilon} \biggl(1 - \frac{(m-1)(N_n^\alpha y)^{-1+\varepsilon}}{\theta} \biggr)^{1-\varepsilon} \biggr).
\end{align*}
For $n$ large enough, for all $m \in \{ 1, \dots, m_n \}$,
\begin{align*}
\inf_{m(N_n^\alpha y)^{-1+\varepsilon} \leqslant \theta < \eta} \left\{ \frac{(1-\theta)^2 y^2}{2\sigma^2(1+c_n)} + q'' \theta^{1-\epsilon} \left( 1 -\frac{(m-1)(N_n^\alpha y)^{-1+\varepsilon}}{\theta} \right)^{1-\epsilon} \right\}
 & \geqslant \frac{(1-\eta)^2 y^2}{2\sigma^2} (1-\eta) \\
 & \geqslant (1-\eta)^3 I(y)
\end{align*}
and
\begin{align*}
 & \inf_{\eta \leqslant \theta < 1} \left\{ \frac{(1-\theta)^2 y^2}{2\sigma^2(1+c_n)} + q'' \theta^{1-\epsilon} \left( 1 -\frac{(m-1)(N_n^\alpha y)^{-1+\varepsilon}}{\theta} \right)^{1-\epsilon} \right\} \\
 \geqslant & \inf_{\eta \leqslant \theta < 1} \left\{ \frac{(1-\theta)^2 y^2}{2\sigma^2} + q'' \theta^{1-\epsilon} y^{1-\varepsilon} \right\} (1-\eta) \\
 \geqslant & \mathop{} (1-\eta)^2 I(y) .
\end{align*}
So $M_n \leqslant - N_n^{(1-\varepsilon)/(1+\varepsilon)} (1-\eta)^3 I(y)$ and
\begin{align}
 & \limsup_{n \to \infty} \frac{1}{N_n^{(1-\varepsilon)/(1+\varepsilon)}} \log \sum_{m=1}^{m_n} \binom{n}{m} I_{2,m} \nonumber \\
 \leqslant & - (1-\eta)^3 I(y) + \limsup_{n \to \infty} \frac{1}{N_n^{(1-\varepsilon)/(1+\varepsilon)}} \log \sum_{m=1}^{m_n} \binom{n}{m} \bigl( N_n^\alpha y e^{- q''(N_n^\alpha y)^{\varepsilon(1-\varepsilon)}} \bigr)^m \nonumber \\
 = & - (1-\eta)^3 I(y) . \label{eq:I2m_transition}
\end{align}
Finally, \eqref{eq:pinmgrand_intermediate}, \eqref{eq:majgausssumI1m_intermediate}, and \eqref{eq:I2m_transition} imply
\[
\limsup_{n\to\infty} \frac{1}{N_n^{(1-\varepsilon)/(1+\varepsilon)}} \log \sum_{m=1}^{N_n}\Pi_{n,m} \leqslant - (1-\eta)^3 I(y) ,
\]
and \eqref{eq:pi_sum_intermediate} follows, letting $q'' \to q$, i.e.\ $\eta \to 0$.
%
%
%
\end{proof}

\begin{rem}
Notice that, using the contraction principle, one can show that, for all fixed $m$,
\begin{align*}
\limsup_{n\to\infty} \frac{1}{N_n^{(1-\varepsilon)/(1+\varepsilon)}}\log \Pi_{n,m} = 
-I(y).
\end{align*}
\end{rem}

\section{About the assumptions}\label{sec:assump}



Looking into the proof of Theorem \ref{thm:nagaev_weak_array_mob_eps}, one can see that assumption \ref{hyp:tails} can be weakened and one may only assume the two conditions that follow.
\begin{thm}
The conclusion of Theorem \ref{thm:nagaev_weak_array_mob_eps} holds under \ref{hyp:mt2_weak_array_mob_v2} and:
\begin{description}
\item[(H1a)\label{hyp:h1a}] for all $y_n = \Theta(N_n^\alpha)$, $\log \Prob(\rva{Y}{n} \geqslant y_n) \sim -q y_n^{1-\varepsilon}$;
\item[(H1b)\label{hyp:h1b}] for all $N_n^{\alpha \varepsilon} \preccurlyeq y_n \preccurlyeq N_n^\alpha$, $\limsup y_n^{-(1-\varepsilon)} \log \Prob(\rva{Y}{n} \geqslant y_n) \leqslant -q$.
\end{description}
\end{thm}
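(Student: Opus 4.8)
The plan is to revisit the proof of Theorem~\ref{thm:nagaev_weak_array_mob_eps} and to localize precisely where assumption \ref{hyp:tails} is used, checking that each occurrence is covered either by \ref{hyp:h1a} or by \ref{hyp:h1b}. Recall the decomposition \eqref{eq:decomp}, namely $\Prob(\rva{T}{n}\geqslant N_n^\alpha y)=P_{n,0}+R_{n,0}$, and that the conclusion of Theorem~\ref{thm:nagaev_weak_array_mob_eps} follows from the lower estimate furnished by Lemma~\ref{lem2_weak_array_mob_v2} together with the upper bound \eqref{lem1_weak_array_mob_v2} and the principle of the largest term. I would therefore argue that both ingredients survive under the weaker hypotheses \ref{hyp:h1a} and \ref{hyp:h1b}.

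First, I would reexamine Lemma~\ref{lem2_weak_array_mob_v2}. Its proof invokes \ref{hyp:tails} only through the two tail probabilities $\Prob(\rva{Y}{n}\geqslant N_n^\alpha y)$ (for the upper bound) and $\Prob(\rva{Y}{n}\geqslant N_n^\alpha(y+\delta))$ (for the lower bound), while the Chebyshev step relies solely on \ref{hyp:mt2_weak_array_mob_v2}. Since both evaluation points are of order $\Theta(N_n^\alpha)$, the full equivalence $\log\Prob(\rva{Y}{n}\geqslant y_n)\sim -qy_n^{1-\varepsilon}$ at these scales is exactly what \ref{hyp:h1a} provides. Hence Lemma~\ref{lem2_weak_array_mob_v2}, and in particular the matching lower bound $-qy^{1-\varepsilon}$ for $R_{n,0}$, holds under \ref{hyp:h1a} and \ref{hyp:mt2_weak_array_mob_v2}.

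Next, I would treat the bound \eqref{lem1_weak_array_mob_v2} on $P_{n,0}$. The estimate of the small-increment expectation $\Espe[e^{\,\cdots}\indic_{\rva{Y}{n}<(N_n^\alpha y)^\varepsilon}]$ uses only a Taylor expansion and the second-moment control \ref{hyp:mt2_weak_array_mob_v2}. The remaining expectation, controlled by integration by parts, uses \ref{hyp:tails} exclusively through Lemma~\ref{lem:tail}, that is, through the \emph{uniform upper} tail bound $\log\Prob(\rva{Y}{n}\geqslant u)\leqslant -q'u^{1-\varepsilon}$ for $u\in[(N_n^\alpha y)^\varepsilon,N_n^\alpha y]$. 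The crucial point is that this uniform bound is a consequence of \ref{hyp:h1b} alone: the contraposition argument of Lemma~\ref{lem:tail} produces, should the bound fail, a sequence $u_n\in[(N_n^\alpha y)^\varepsilon,N_n^\alpha y]$ with $u_n^{-(1-\varepsilon)}\log\Prob(\rva{Y}{n}\geqslant u_n)>-q'$; since $(N_n^\alpha y)^\varepsilon=\Theta(N_n^{\alpha\varepsilon})$ and $N_n^\alpha y=\Theta(N_n^\alpha)$, such a sequence satisfies $N_n^{\alpha\varepsilon}\preccurlyeq u_n\preccurlyeq N_n^\alpha$ and therefore $\limsup u_n^{-(1-\varepsilon)}\log\Prob(\rva{Y}{n}\geqslant u_n)\geqslant -q'>-q$, contradicting \ref{hyp:h1b}. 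Thus \eqref{lem1_weak_array_mob_v2} holds under \ref{hyp:h1b} and \ref{hyp:mt2_weak_array_mob_v2}.

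The main point to be careful about is precisely this last verification: the $\limsup$ formulation of \ref{hyp:h1b} is tailored so that the contraposition closes, and one must check that every evaluation point appearing in the integration-by-parts step (including the boundary term $\Prob(\rva{Y}{n}\geqslant (N_n^\alpha y)^\varepsilon)$ and the range of the supremum of $u\mapsto q'(N_n^\alpha y)^{-\varepsilon}u-q''u^{1-\varepsilon}$) indeed lies in the admissible range $N_n^{\alpha\varepsilon}\preccurlyeq \cdot \preccurlyeq N_n^\alpha$ of \ref{hyp:h1b}. Granting this, I would combine the lower bound coming from $R_{n,0}$ (under \ref{hyp:h1a}) with the upper bounds on $R_{n,0}$ (under \ref{hyp:h1a}) and on $P_{n,0}$ (under \ref{hyp:h1b}), and apply the principle of the largest term (\emph{see} \cite[Lemma~1.2.15]{DZ98}) exactly as in the original proof, to obtain the conclusion of Theorem~\ref{thm:nagaev_weak_array_mob_eps} under \ref{hyp:h1a}, \ref{hyp:h1b}, and \ref{hyp:mt2_weak_array_mob_v2}.
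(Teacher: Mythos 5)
Your proposal is correct and follows exactly the route the paper intends: the paper itself only remarks that one should ``look into the proof'' of Theorem~\ref{thm:nagaev_weak_array_mob_eps}, and your localization --- \ref{hyp:h1a} covering the tail evaluations at scale $\Theta(N_n^\alpha)$ in Lemma~\ref{lem2_weak_array_mob_v2}, and \ref{hyp:h1b} yielding the uniform bound of Lemma~\ref{lem:tail} via the contraposition argument --- is precisely that verification, matching the supporting lemmas (H1a)$\Leftrightarrow$(H1a$'$) and (H1b)$\Leftrightarrow$(H1b$'$) stated in Section~\ref{sec:assump}.
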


\begin{lem}
\ref{hyp:h1a} is equivalent to:
\begin{description}
\item[(H1a')\label{hyp:h1a'}] for all $y > 0$, $\log \Prob(\rva{Y}{n} \geqslant N_n^\alpha y) \sim - q (N_n^\alpha y)^{1-\varepsilon}$.
\end{description}
\end{lem}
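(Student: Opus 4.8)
The plan is to prove the equivalence of the two formulations of assumption \ref{hyp:h1a} by showing each direction. The statement \ref{hyp:h1a} quantifies over all sequences $y_n = \Theta(N_n^\alpha)$, while \ref{hyp:h1a'} quantifies over fixed positive constants $y$; since the sequences $y_n = N_n^\alpha y$ are a special case of sequences of order $\Theta(N_n^\alpha)$, the implication \ref{hyp:h1a} $\Rightarrow$ \ref{hyp:h1a'} is immediate by specialization.

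The substantial direction is \ref{hyp:h1a'} $\Rightarrow$ \ref{hyp:h1a}. First I would argue by contraposition: if \ref{hyp:h1a} fails, then there exists a sequence $y_n = \Theta(N_n^\alpha)$ along which $\log \Prob(\rva{Y}{n} \geqslant y_n)$ is \emph{not} asymptotically equivalent to $-q y_n^{1-\varepsilon}$; extracting a subsequence along which $y_n/N_n^\alpha$ converges (possible since $\limsup |y_n/N_n^\alpha| < \infty$ and the ratio is bounded below away from $0$ by the $\Theta$ condition) yields a limit $y_\infty \in (0,\infty)$, and the failure of the asymptotic equivalence along this subsequence should be transported to a contradiction with \ref{hyp:h1a'} applied at nearby fixed values $y_\infty - \delta$ and $y_\infty + \delta$. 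The key tool is monotonicity of the tail: for fixed small $\delta > 0$ and $n$ large enough, one has $N_n^\alpha(y_\infty - \delta) \leqslant y_n \leqslant N_n^\alpha(y_\infty + \delta)$, so that
\[
\Prob(\rva{Y}{n} \geqslant N_n^\alpha(y_\infty + \delta)) \leqslant \Prob(\rva{Y}{n} \geqslant y_n) \leqslant \Prob(\rva{Y}{n} \geqslant N_n^\alpha(y_\infty - \delta)).
\]
Taking logarithms, dividing by $y_n^{1-\varepsilon} = \Theta(N_n^{\alpha(1-\varepsilon)})$, and invoking \ref{hyp:h1a'} at the two fixed endpoints sandwiches $\limsup$ and $\liminf$ of $y_n^{-(1-\varepsilon)} \log \Prob(\rva{Y}{n} \geqslant y_n)$ between $-q(y_\infty+\delta)^{1-\varepsilon}/y_\infty^{1-\varepsilon}$ and $-q(y_\infty-\delta)^{1-\varepsilon}/y_\infty^{1-\varepsilon}$; letting $\delta \to 0$ forces the ratio to $-q$, contradicting the assumed failure.

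The main obstacle is bookkeeping the uniformity: one must be careful that the $\Theta$ hypothesis genuinely prevents $y_n/N_n^\alpha$ from approaching $0$ (where $y^{1-\varepsilon}$ is not comparable and the sandwich degenerates), which is exactly why the lower bound in the $\Theta$ condition is essential and why the argument does not extend to the full range of \ref{hyp:tails}. Provided the limit point $y_\infty$ is strictly positive, the continuity of $y \mapsto y^{1-\varepsilon}$ on $(0,\infty)$ makes the final $\delta \to 0$ passage routine. I would write the clean forward direction (monotone sandwich at fixed $y \pm \delta$ applied directly to an arbitrary $\Theta$-sequence) rather than the contrapositive if it reads more transparently, but the monotonicity sandwich is the heart of the matter either way.
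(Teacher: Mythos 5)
Your proposal is correct and follows essentially the same route as the paper's proof: both reduce the nontrivial direction (H1a') $\Rightarrow$ (H1a) to a monotonicity sandwich between $N_n^\alpha(y_\infty-\delta)$ and $N_n^\alpha(y_\infty+\delta)$, combined with a double subsequence extraction (boundedness of the normalized log-probability, then convergence of $N_n^{-\alpha}y_n$ to some $y_\infty>0$) and continuity of $y\mapsto y^{1-\varepsilon}$ as $\delta\to 0$. The only cosmetic difference is that you phrase it contrapositively while the paper states it directly; the content is identical.
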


\begin{proof}
If $ N_n^\alpha c_1\leqslant y_n \leqslant N_n^\alpha c_2$, then
\[
-qc_2 \leqslant N_n^{-\alpha(1-\varepsilon)} \log \Prob(\rva{Y}{n} \geqslant y_n) \leqslant -q c_1 .
\]
First extract a convergent subsequence; then, again extract a subsequence such that $N_n^{-\alpha} y_n$ is convergent and use \ref{hyp:h1a} to show that $N_n^{-\alpha(1-\varepsilon)} \log \Prob(\rva{Y}{n} \geqslant y_n)$ is convergent.
\end{proof}

\medskip

\begin{lem}
\ref{hyp:h1b} is equivalent to the conclusion of Lemma \ref{lem:tail}:
\begin{description}
\item[(H1b')\label{hyp:h1b'}] $\forall y > 0 \quad \forall q' < q \quad \exists n_0 \quad \forall n \geqslant n_0 \quad \forall u \in \intervalleff{(N_n^\alpha y)^\varepsilon}{N_n^\alpha y} \quad \log\Prob(\rva{Y}{n}\geqslant u)\leqslant -q' u^{1-\varepsilon}$.
\end{description}
\end{lem}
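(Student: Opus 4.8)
The plan is to prove the two implications separately; both are soft "sequence versus interval" arguments requiring no probabilistic input beyond the definitions and the order of quantifiers, so the whole proof reduces to bookkeeping with the symbol $\preccurlyeq$.

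First I would show \ref{hyp:h1b'} $\Rightarrow$ \ref{hyp:h1b}. Fix a sequence $(y_n)$ with $N_n^{\alpha\varepsilon} \preccurlyeq y_n \preccurlyeq N_n^\alpha$ and fix $q' < q$. By definition of $\preccurlyeq$ there are constants $0 < c_1 \leqslant c_2$ such that, for $n$ large, $c_1 N_n^{\alpha\varepsilon} \leqslant y_n \leqslant c_2 N_n^\alpha$. I would then cover the band $\intervalleff{c_1 N_n^{\alpha\varepsilon}}{c_2 N_n^\alpha}$ by two of the intervals appearing in \ref{hyp:h1b'}: taking $y_- = c_1^{1/\varepsilon}$ and $y_+ = c_2$, the interval for $y_-$ starts at $(N_n^\alpha y_-)^\varepsilon = c_1 N_n^{\alpha\varepsilon}$, the interval for $y_+$ ends at $N_n^\alpha y_+ = c_2 N_n^\alpha$, and since $\alpha(1-\varepsilon) > 0$ the upper end $N_n^\alpha y_-$ of the first eventually exceeds the lower end $(N_n^\alpha y_+)^\varepsilon = c_2^\varepsilon N_n^{\alpha\varepsilon}$ of the second, so for $n$ large the union of the two intervals contains the whole band, in particular $y_n$. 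Applying \ref{hyp:h1b'} with $y_-$ and with $y_+$ (and taking the larger of the two thresholds) yields $\log\Prob(\rva{Y}{n} \geqslant y_n) \leqslant -q' y_n^{1-\varepsilon}$ for $n$ large, i.e.\ $y_n^{-(1-\varepsilon)} \log\Prob(\rva{Y}{n}\geqslant y_n) \leqslant -q'$; passing to the limsup and then letting $q' \to q$ gives \ref{hyp:h1b}.

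Then for \ref{hyp:h1b} $\Rightarrow$ \ref{hyp:h1b'} I would argue by contraposition, exactly as in the proof of Lemma \ref{lem:tail}. If \ref{hyp:h1b'} fails, there are $y > 0$ and $q' < q$ such that, for infinitely many $n$, some $u_n \in \intervalleff{(N_n^\alpha y)^\varepsilon}{N_n^\alpha y}$ satisfies $\log\Prob(\rva{Y}{n}\geqslant u_n) > -q' u_n^{1-\varepsilon}$. Completing $(u_n)$ to a full sequence along the remaining indices by any admissible choice (e.g.\ $N_n^\alpha$), one obtains a sequence $(y_n)$ with $N_n^{\alpha\varepsilon} \preccurlyeq y_n \preccurlyeq N_n^\alpha$ — the two-sided control on the bad indices coming from $y^\varepsilon N_n^{\alpha\varepsilon} = (N_n^\alpha y)^\varepsilon \leqslant u_n \leqslant y N_n^\alpha$ — along which $y_n^{-(1-\varepsilon)} \log\Prob(\rva{Y}{n}\geqslant y_n) > -q'$ infinitely often, so its limsup is $\geqslant -q' > -q$, contradicting \ref{hyp:h1b}.

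The only genuinely delicate point is the covering step in the first implication: because $\preccurlyeq$ provides only multiplicative constants, a single interval $\intervalleff{(N_n^\alpha y)^\varepsilon}{N_n^\alpha y}$ need not contain $y_n$ for every $n$, since its lower endpoint scales like $y^\varepsilon N_n^{\alpha\varepsilon}$ while its upper endpoint scales like $y N_n^\alpha$, and matching both constants at once would require $c_2^\varepsilon \leqslant c_1$. This is why I split the band into a low part and a high part and use two values of $y$, the overlap of the two intervals for large $n$ being guaranteed precisely by $\alpha(1-\varepsilon) > 0$. Everything else is routine manipulation of the definition of $\preccurlyeq$ and of the quantifiers in \ref{hyp:h1b'}.
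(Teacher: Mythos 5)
Your proof is correct and, for the substantive implication \ref{hyp:h1b} $\Rightarrow$ \ref{hyp:h1b'}, it is exactly the paper's argument (the contraposition used for Lemma \ref{lem:tail}, with the completion of the partial sequence $(u_n)$ to a full admissible sequence handled more carefully than in the paper). The converse implication, which the paper leaves implicit, is also treated correctly: your covering of the band $\intervalleff{c_1 N_n^{\alpha\varepsilon}}{c_2 N_n^{\alpha}}$ by the two intervals associated with $y_-=c_1^{1/\varepsilon}$ and $y_+=c_2$ is a valid way to handle the mismatch between the scalings $y^{\varepsilon}N_n^{\alpha\varepsilon}$ and $yN_n^{\alpha}$ of the two endpoints, the overlap for large $n$ being guaranteed by $\alpha(1-\varepsilon)>0$.
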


\begin{proof}
See the proof of Lemma \ref{lem:tail}.
\end{proof}

\begin{thm}
The conclusion of Theorem \ref{thm:nagaev_weak_array_mob_eps} holds under assumptions  \ref{hyp:h1a}, \ref{hyp:h1b}, and 
$\Espe[\abs{\rva{Y}{n}}^{2+\gamma}]/\Espe[\abs{\rva{Y}{n}}^{2}]^{1+\gamma/2}=o(N_n^{\gamma/2})$.
%
\end{thm}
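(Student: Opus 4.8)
The plan is to rerun the proof of Theorem~\ref{thm:nagaev_weak_array_mob_eps} almost verbatim, starting from the decomposition \eqref{eq:decomp}, $\Prob(\rva{T}{n}\geqslant N_n^\alpha y)=P_{n,0}+R_{n,0}$, and to isolate the two places where \ref{hyp:mt2_weak_array_mob_v2} was actually used, replacing each by the new condition. Everything that relied only on \ref{hyp:tails} is already available: by the preceding lemmas \ref{hyp:h1a} is equivalent to \ref{hyp:h1a'}, which gives both the upper bound \eqref{eq:R0_maj} on $R_{n,0}$ and the maximal-jump lower bound, while \ref{hyp:h1b} is equivalent to \ref{hyp:h1b'}, i.e.\ to the conclusion of Lemma~\ref{lem:tail}, which is exactly what the integration-by-parts estimate for $P_{n,0}$ consumes. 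Writing $\sigma_n^2=\Espe[\rva{Y}{n}^2]$ and $\beta_n=\Espe[\abs{\rva{Y}{n}}^{2+\gamma}]$, the hypothesis reads $\beta_n=o(N_n^{\gamma/2}\sigma_n^{2+\gamma})$, and it remains only to re-establish (i) the concentration used in the lower bound of Lemma~\ref{lem2_weak_array_mob_v2} and (ii) the second-order control of the truncated Laplace transform bounding $P_{n,0}$.

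For (ii) I keep the argument of Theorem~\ref{thm:nagaev_weak_array_mob_eps} unchanged: with $\lambda_n=q'(N_n^\alpha y)^{-\varepsilon}$ and the splitting of $\Espe[e^{\lambda_n\rva{Y}{n}}\indic_{\rva{Y}{n}<N_n^\alpha y}]$ at the level $(N_n^\alpha y)^\varepsilon$, the second-order Taylor bound $e^t\leqslant 1+t+\tfrac{e^{q'}}{2}t^2$ (valid for $t\leqslant q'$) on $\{\rva{Y}{n}<(N_n^\alpha y)^\varepsilon\}$ produces the term $1+\tfrac{(q')^2e^{q'}}{2}\,\sigma_n^2/(N_n^\alpha y)^{2\varepsilon}$ (the first-order term is $-\lambda_n\Espe[\rva{Y}{n}\indic_{\rva{Y}{n}\geqslant(N_n^\alpha y)^\varepsilon}]\leqslant0$ since $\rva{Y}{n}$ is centered, and is exponentially small by Lemma~\ref{lem:tail}), while the part on $\{(N_n^\alpha y)^\varepsilon\leqslant\rva{Y}{n}<N_n^\alpha y\}$ is handled by integration by parts and Lemma~\ref{lem:tail} exactly as before. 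This reduces (ii) to showing the quadratic term $\lambda_n^2\sigma_n^2=(q'/y^\varepsilon)^2N_n^{-2\alpha\varepsilon}\sigma_n^2=o(N_n^{\alpha(1-\varepsilon)-1})$, i.e.\ $\sigma_n^2=o(N_n^{\alpha(1+\varepsilon)-1})$, after which $\limsup N_n^{-\alpha(1-\varepsilon)}\log P_{n,0}\leqslant -q'y^{1-\varepsilon}$ and $q'\to q$.

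For (i) I replace Chebyshev's inequality by a moment inequality at order $2+\gamma$, which is where the new hypothesis enters. By Rosenthal's inequality, $\Espe[\abs{\rva[N_n-1]{T}{n}}^{2+\gamma}]\leqslant C_\gamma\bigl(N_n\beta_n+(N_n\sigma_n^2)^{1+\gamma/2}\bigr)$, and the hypothesis rewrites $N_n\beta_n=o(N_n^{1+\gamma/2}\sigma_n^{2+\gamma})=o((N_n\sigma_n^2)^{1+\gamma/2})$, so Markov's inequality gives
\[
\Prob(\rva[N_n-1]{T}{n}\leqslant -N_n^\alpha\delta)\leqslant \frac{C_\gamma'\,(N_n\sigma_n^2)^{1+\gamma/2}}{(N_n^\alpha\delta)^{2+\gamma}}\xrightarrow[n\to\infty]{}0
\]
as soon as $N_n\sigma_n^2=o(N_n^{2\alpha})$, using $\alpha(2+\gamma)=2\alpha(1+\gamma/2)$. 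One then concludes the lower bound on $R_{n,0}$ as in Lemma~\ref{lem2_weak_array_mob_v2}, by isolating one jump above $N_n^\alpha(y+\delta)$ and letting $\delta\to0$, and finishes by the principle of the largest term.

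The single genuine difficulty, common to both steps, is that each rests on an \emph{absolute} bound for $\sigma_n^2$: the concentration needs $\sigma_n^2=o(N_n^{2\alpha-1})$ and the $P_{n,0}$ estimate the stronger $\sigma_n^2=o(N_n^{\alpha(1+\varepsilon)-1})$ (consistent, since $\alpha(1+\varepsilon)-1<2\alpha-1$ when $\varepsilon<1$), exactly the content of \ref{hyp:mt2_weak_array_mob_v2}. The new hypothesis, being scale-free, only bounds $\beta_n$ \emph{relative} to $\sigma_n$ and in particular never bounds $\sigma_n^2$ from above by itself. The crux of the proof is therefore to extract such an absolute variance bound, presumably by feeding the tail control of \ref{hyp:h1b}—which already renders the contribution of $\{\rva{Y}{n}\geqslant(N_n^\alpha y)^\varepsilon\}$ to $\sigma_n^2$ negligible—into the Lyapunov ratio $\beta_n/\sigma_n^{2+\gamma}=o(N_n^{\gamma/2})$. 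I expect this conversion, rather than the (routine) Taylor, Rosenthal and integration-by-parts computations, to be the heart of the argument, with the exponent $\gamma/2$ calibrated precisely so that the moment ratio can take over the role of \ref{hyp:mt2_weak_array_mob_v2}.
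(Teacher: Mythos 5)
Your proposal is explicitly unfinished, and the two places where it stalls are exactly where it diverges from the paper. First, for the minoration of $R_{n,0}$ you replace Chebyshev by Rosenthal plus Markov at order $2+\gamma$; the computation is fine as far as it goes, but, as you observe yourself, it still requires the absolute bound $N_n\Espe[\rva{Y}{n}^2]=o(N_n^{2\alpha})$, which the stated hypotheses do not supply. The paper's entire proof is a single observation that removes the need for any such bound: instead of asking for $\Prob(\rva[N_n-1]{T}{n}\geqslant -N_n^\alpha\delta)\to 1$, write
\[
R_{n,0}\geqslant \Prob(\rva[N_n-1]{T}{n}\geqslant 0)\,\Prob(\rva{Y}{n}\geqslant N_n^\alpha y),
\]
and note that the third hypothesis is precisely Lyapunov's condition for the triangular array $(\rva[i]{Y}{n})$, since $N_n\Espe[\abs{\rva{Y}{n}}^{2+\gamma}]/(N_n\Espe[\rva{Y}{n}^2])^{1+\gamma/2}=N_n^{-\gamma/2}\,\Espe[\abs{\rva{Y}{n}}^{2+\gamma}]/\Espe[\rva{Y}{n}^2]^{1+\gamma/2}\to 0$. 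Lyapunov's CLT then gives $\Prob(\rva[N_n-1]{T}{n}\geqslant 0)\to 1/2$, a constant lower bound which suffices at logarithmic scale; the parameter $\delta$ disappears and no information on the size of $\Espe[\rva{Y}{n}^2]$ is ever used. This scale-free use of the CLT is the key idea your proposal misses: the exponent $\gamma/2$ is calibrated for Lyapunov's theorem, not for reconstructing \ref{hyp:mt2_weak_array_mob_v2}.

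Second, your worry about the majoration of $P_{n,0}$ is well founded, but the resolution you hope for does not exist. The Taylor step needs $\Espe[\rva{Y}{n}^2]=o(N_n^{\alpha(1+\varepsilon)-1})$, and since the Lyapunov ratio is scale-invariant while \ref{hyp:h1a} and \ref{hyp:h1b} constrain only the right tail, no combination of the three listed hypotheses yields an absolute variance bound: a large negative atom at $-M_n$ with mass $b_n/M_n$, balanced by a positive atom at $b_n\ll N_n^{\alpha\varepsilon}$, satisfies all three while $\Espe[\rva{Y}{n}^2]\approx M_nb_n$ is arbitrarily large. The paper does not perform the conversion you anticipate either: its proof states that the only modification is the minoration of $R_{n,0}$ and leaves the estimate of $P_{n,0}$ untouched, i.e.\ it tacitly continues to use the variance control of \ref{hyp:mt2_weak_array_mob_v2} there. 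So the statement should be understood as replacing only the Chebyshev step; your instinct that the moment ratio cannot take over the role of \ref{hyp:mt2_weak_array_mob_v2} in bounding $P_{n,0}$ is correct, and the heart of the argument lies elsewhere, in the CLT trick above.
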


\begin{proof}
The only modification in the proof is the minoration of $R_{n,0}$:
\[
R_{n,0} \geqslant \Prob(\rva[N_n-1]{T}{n} \geqslant 0) \Prob(\rva{Y}{n} \geqslant  N_n^{\alpha} y) .
\]
Now Lyapunov's theorem \cite[Theorem 27.3]{billingsley2013convergence} applies and provides $\Prob(\rva[N_n-1]{T}{n} \geqslant 0) \to 1/2$.
\end{proof}

As for Theorem \ref{thm:nagaev_weak_array_mob_eps2}, assumption \ref{hyp:tails} can be weakened and one may only assume \ref{hyp:h1b}, or even the following weaker assumption.
\begin{thm}
The conclusion of Theorem \ref{thm:nagaev_weak_array_mob_eps2} holds under \ref{hyp:var_inf}, \ref{hyp:mt3_inf}, and:
\begin{description}
\item[(H1c)\label{hyp:h1c}] $\forall y > 0 \quad \exists q > 0 \quad \exists n_0 \quad \forall n \geqslant n_0 \quad \forall u \in \intervalleff{(N_n^\alpha y)^\varepsilon}{N_n^\alpha y} \quad \log \Prob(\rva{Y}{n} \geqslant u) \leqslant - q u^{1-\varepsilon}$.
\end{description}
\end{thm}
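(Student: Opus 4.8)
The plan is to repeat the proof of Theorem~\ref{thm:nagaev_weak_array_mob_eps2} almost verbatim, after observing that assumption~\ref{hyp:tails} enters there only through \emph{upper} bounds on $\Prob(\rva{Y}{n} \geqslant u)$ for $u \in \intervalleff{(N_n^\alpha y)^\varepsilon}{N_n^\alpha y}$, except in the moment estimates of the truncated variable, where the far tail $u > N_n^\alpha y$ will now have to be handled by \ref{hyp:mt3_inf} instead of by semiexponential decay. Since the limiting rate $y^2/(2\sigma^2)$ does not involve $q$, the mere existence of a positive $q$ in \ref{hyp:h1c} is enough. Fix $y > 0$ and keep the decomposition \eqref{eq:decomp2}, so that $P_n = \Pi_{n,0} + \sum_{m=1}^{N_n} \binom{N_n}{m} \Pi_{n,m} + R_{n,0}$.

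First I would dispose of $R_{n,0}$. By \ref{hyp:h1c} at $u = N_n^\alpha y$, $R_{n,0} \leqslant N_n \Prob(\rva{Y}{n} \geqslant N_n^\alpha y) \leqslant N_n e^{-q(N_n^\alpha y)^{1-\varepsilon}}$; as $\alpha < (1+\varepsilon)^{-1}$ yields $\alpha(1-\varepsilon) > 2\alpha-1$, this gives \eqref{eq:gauss_Rn0}, namely $N_n^{-(2\alpha-1)} \log R_{n,0} \to -\infty$. Only this upper bound is needed, so the loss of the exact asymptotics of Lemma~\ref{lem2_weak_array_mob_v2} is harmless.

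Next I would re-establish Lemma~\ref{lem:pi0}. The factor $\Prob(\rva{Y}{n} < (N_n^\alpha y)^\varepsilon)^{N_n}$ still tends to $1$, since \ref{hyp:h1c} at $u = (N_n^\alpha y)^\varepsilon$ gives $N_n \Prob(\rva{Y}{n} \geqslant (N_n^\alpha y)^\varepsilon) \leqslant N_n e^{-q(N_n^\alpha y)^{\varepsilon(1-\varepsilon)}} \to 0$. The only genuinely new point is the computation of $\Lambda_n(u)$, which requires $\Espe[\rva{\Yinf}{n}] = o(N_n^{-(1-\alpha)})$ and $\Espe[(\rva{\Yinf}{n})^2] \to \sigma^2$. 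Writing $c = (N_n^\alpha y)^\varepsilon$ and using $\Espe[\rva{Y}{n}] = 0$, I would split $\Espe[\rva{Y}{n} \indic_{\rva{Y}{n} \geqslant c}]$ (and likewise the truncated second moment) at $N_n^\alpha y$: over $\intervalleff{c}{N_n^\alpha y}$ I integrate by parts against the semiexponential upper bound of \ref{hyp:h1c}, which is exponentially small in $N_n^{\alpha\varepsilon(1-\varepsilon)}$; over $(N_n^\alpha y, \infty)$ I bound $\Prob(\rva{Y}{n} \geqslant u)$ by $\Espe[\abs{\rva{Y}{n}}^{2+\gamma}] u^{-(2+\gamma)}$ (Markov) and integrate. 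Assumption~\ref{hyp:mt3_inf} together with $\alpha > 1/2$ makes these contributions negligible at the required orders ($o(N_n^{-(1-\alpha)})$ for the first moment, $o(1)$ for the second). Combined with \ref{hyp:var_inf}, this gives $\Lambda_n(u) \to u^2\sigma^2/2$ and hence \eqref{eq:pi0} via the unilateral Gärtner-Ellis theorem.

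Finally, for the sum over $m \geqslant 1$ I would re-run Lemmas~\ref{lem:pinm_gaussian}, \ref{lem:I1m_gaussian}, and \ref{lem:I2m}. The tail upper bound of \ref{hyp:h1c} replaces Lemma~\ref{lem:tail} in the first two (it is used only at the endpoint $c$ and in the discretization, always as an upper bound), while Lemma~\ref{lem:I2m} rests solely on \ref{hyp:var_inf} and \ref{hyp:mt3_inf} in its Chebyshev step, the linear term there being $\leqslant 0$ because $\Espe[\rva{Y}{n}] = 0$. This reproduces \eqref{eq:pi_sum}, i.e.\ $\limsup_{n\to\infty} N_n^{-(2\alpha-1)} \log \sum_{m=1}^{N_n} \binom{N_n}{m} \Pi_{n,m} \leqslant -y^2/(2\sigma^2)$. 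The conclusion then follows exactly as in Theorem~\ref{thm:nagaev_weak_array_mob_eps2}: the lower bound from $P_n \geqslant \Pi_{n,0}$ and \eqref{eq:pi0}, the upper bound from the principle of the largest term (\emph{see} \cite[Lemma 1.2.15]{DZ98}) applied to \eqref{eq:gauss_Rn0}, \eqref{eq:pi0}, and \eqref{eq:pi_sum}. The main obstacle is precisely this moment estimate of the truncated variable: since \ref{hyp:h1c} controls the tail only up to $N_n^\alpha y$, one must check that the polynomial control of the far tail provided by \ref{hyp:mt3_inf} is sharp enough, which it is thanks to $\alpha > 1/2$.
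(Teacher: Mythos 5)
Your proposal is correct and is exactly the verification the paper intends (the paper states this theorem without proof, as a remark that the argument for Theorem \ref{thm:nagaev_weak_array_mob_eps2} only ever uses the tail hypothesis through upper bounds on $\Prob(\rva{Y}{n} \geqslant u)$ for $u \in \intervalleff{(N_n^\alpha y)^\varepsilon}{N_n^\alpha y}$, plus the moment assumptions). You correctly identify the two places where care is needed — replacing the exact asymptotics of Lemma \ref{lem2_weak_array_mob_v2} for $R_{n,0}$ by the union bound, and controlling the far tail $u > N_n^\alpha y$ in the moments of $\rva{\Yinf}{n}$ via \ref{hyp:mt3_inf} and $\alpha > 1/2$ — and both fixes work as you describe.
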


Finally, in Theorem \ref{thm:nagaev_weak_array_mob_eps_intermediate}, assumption \ref{hyp:tails} can be weakened and one may only assume \ref{hyp:h1a} and \ref{hyp:h1b}.

\section{Application: truncated random variable}\label{sec:baby}

Let us consider a centered real-valued random variable $Y$, admitting a finite moment of order $2+\gamma$ for some $\gamma>0$. Set $\sigma^2 \defeq \Espe[Y^2]$. Now, let $\beta > 0$ and $c > 0$. For all $n \geqslant 1$, let us introduce the truncated random variable $\rva{Y}{n}$ defined by $\mathcal{L}(\rva{Y}{n}) = \mathcal{L}(Y\ |\ Y <  N_n^\beta c)$. Such truncated random variables naturally appear in proofs of large deviation results. 

\medskip

If $Y$ has a light-tailed distribution, i.e.\ $\Lambda_Y(\lambda)\defeq \log \Espe[e^{\lambda Y}]<\infty$ for some $\lambda>0$, then (the unilateral version of) Gärtner-Ellis theorem applies:
\begin{itemize}
\item if $\alpha\in \intervalleoo{1/2}{1}$, then 
\[
\lim_{n\to \infty} \frac{1}{N_n^{2\alpha-1}} \log \Prob(\rva{T}{n} \geqslant N_n^\alpha y) = -\frac{y^2}{2\sigma^2};
\]
\item if $\alpha=1$, then 
\[
\lim_{n\to \infty} \frac{1}{N_n} \log \Prob(\rva{T}{n} \geqslant N_n^\alpha y) = -\Lambda_Y^*(y)\defeq -\sup_{\lambda\geqslant 0}\{\lambda y-\Lambda_Y(\lambda)\}.
\]
\end{itemize}
Note that we recover the same asymptotics as for the non truncated random variable $Y$. In other words, the truncation does not impact the deviation behaviour. 

\medskip

Now we consider the case where 
$\log \Prob(Y \geqslant y) \sim -q y^{1-\varepsilon}$
for some $q>0$ and $\varepsilon \in \intervalleoo{0}{1}$. In this case, Gärtner-Ellis theorem does not apply since all the rate functions are not convex as usual (as can be seen in Figures \ref{fig:fonction_taux_1} to \ref{fig:fonction_taux_3}). 
Observe that, as soon as $y_n \to \infty$,
\[
\limsup_{n \to \infty} \frac{1}{y_n^{1-\varepsilon}} \log \Prob(\rva{Y}{n} \geqslant y_n) = \limsup_{n\to \infty} \frac{1}{y_n^{1-\varepsilon}} \left(\log \Prob (y_n \leqslant Y < N_n^\beta c)-\log \Prob (Y < N_n^\beta c)\right) \leqslant -q ,
\]
so \ref{hyp:h1b} is satisfied. If, moreover, $y_n \leqslant  N_n^\beta c'$ with $c' < c$, then $\log \Prob(\rva{Y}{n} \geqslant y_n) \sim -q y_n^{1-\varepsilon}$, so \ref{hyp:tails} is satisfied for $\alpha < \beta$. In addition, $\Espe[\rva{Y}{n}]-\Espe[Y]$, $\Espe[\rva{Y}{n}^2]-\Espe[Y^2]$ and $\Espe[\rva{Y}{n}^{2+\gamma}]-\Espe[Y^{2+\gamma}]$ are exponentially decreasing to zero. Therefore, our theorems directly apply for $\alpha < \max(\beta, (1+\varepsilon)^{-1})$, and even for $\alpha = (1+\varepsilon)^{-1} < \beta$. For $\alpha \geqslant \max(\beta, (1+\varepsilon)^{-1})$, the proofs easily adapt to cover all cases. To expose the results, we separate the three cases $\beta > (1+\varepsilon)^{-1}$, $\beta < (1+\varepsilon)^{-1}$ and $\beta = (1+\varepsilon)^{-1}$ and provide a synthetic diagram at the end of the section (page \pageref{graph:alpha_beta}) and the graphs of the exhibited rate functions (pages \pageref{fig:fonction_taux_1} and \pageref{fig:fonction_taux_3}).

\subsection{Case \texorpdfstring{$\beta > (1+\varepsilon)^{-1}$}{f}}

\paragraph{Gaussian range}
When $\alpha < (1+\epsilon)^{-1}$, Theorem \ref{thm:nagaev_weak_array_mob_eps2} applies and, for all $y \geqslant 0$,
\[
\lim_{n \to \infty} \frac{1}{N_n^{2\alpha-1}} \log \Prob(\rva{T}{n} \geqslant N_n^{\alpha} y) = - \frac{y^2}{2\sigma^2} .
\]

\paragraph{Transition 1}
When $\alpha = (1+\epsilon)^{-1}$, Theorem \ref{thm:nagaev_weak_array_mob_eps_intermediate} applies and, for all $y \geqslant 0$,
\[
\lim_{n \to \infty} \frac{1}{N_n^{(1-\varepsilon)/(1+\varepsilon)}} \log \Prob(\rva{T}{n} \geqslant N_n^{\alpha} y) = - I_1(y) \defeq - I(y)= - \inf_{0\leqslant \theta \leqslant 1} \bigl\{ q\theta^{1-\varepsilon} y^{1-\varepsilon}+\frac{(1-\theta)^2y^2}{2\sigma^2} \bigr\} .
\]

\paragraph{Maximal jump range}
When $(1+\epsilon)^{-1} < \alpha < \beta$, Theorem \ref{thm:nagaev_weak_array_mob_eps} applies and, for all $y \geqslant 0$,
\[
\lim_{n \to \infty} \frac{1}{N_n^{\alpha(1-\varepsilon)}} \log \Prob(\rva{T}{n} \geqslant N_n^{\alpha} y) = - q y^{1-\varepsilon} .
\]

\paragraph{Transition 2}
When $\alpha = \beta$, for all $y \geqslant 0$,
\[
\lim_{n \to \infty} \frac{1}{N_n^{\alpha(1-\varepsilon)}} \log \Prob(\rva{T}{n} \geqslant N_n^{\alpha} y)= - I_2(y) \defeq - q\left(\floor{y/c} c^{1-\varepsilon} + (y - \floor{y/c} c)^{1-\varepsilon} \right).
\]

Here, as in all cases where $\alpha \geqslant \beta$, we adapt the definitions \eqref{eq:decomp} and \eqref{eq:decomp2} as:
\begin{equation} \label{eq:decomp3}
\Prob( \rva{T}{n} \geqslant  N_n^{\alpha} y )
 = \Prob( \rva{T}{n} \geqslant  N_n^{\alpha} y,\ \forall i \in \intervallentff{1}{N_n} \quad \rva[i]{Y}{n} <  N_n^{\beta} c)
 \eqdef P_{n,0}
\end{equation}
($R_{n,0} = 0$) and, for all $m \in \intervallentff{0}{N_n}$,
\begin{align}
\Pi_{n,m}
 & = \Prob\Big( \rva{T}{n} \geqslant  N_n^{\alpha} y, \, \forall i \in \intervallentff{1}{N_n-m} \quad \rva[i]{Y}{n}< (N_n^{\beta}c)^\varepsilon, \nonumber\\
 & \hspace{2cm} \forall i \in \intervallentff{N_n-m+1}{N_n} \quad (N_n^{\beta}c)^\varepsilon \leqslant \rva[i]{Y}{n} <  N_n^{\beta} c \Big) . \label{eq:decomp4}
\end{align}
For all $t > 0$,
\begin{align*}
\Pi_{n,0}&= \Prob(\rva{T}{n} \geqslant N_n^\alpha y,\ \forall i \in \intervallentff{1}{N_n} \quad \rva[i]{Y}{n} < (N_n^{\alpha}c)^\varepsilon)\\
 & \leqslant e^{-tyN_n^{\alpha(1-\varepsilon)}} \Espe\bigl[ e^{t N_n^{-\alpha\varepsilon} \rva{Y}{n}} \indic_{\rva{Y}{n} < (N_n^{\alpha}c)^\varepsilon} \bigr]^{N_n} \\
 & = e^{-tyN_n^{\alpha(1-\varepsilon)}(1+o(1))} ,
\end{align*}
(see the proof of Theorem \ref{thm:nagaev_weak_array_mob_eps}), whence Lemma \ref{lem:pi0} with $\mathcal{L}(\rva{\Yinf}{n}) = \mathcal{L}(\rva{Y}{n}\ |\ \rva{Y}{n} < N_n^\alpha c)$ updates into
\[
%
\frac{1}{N_n^{\alpha(1-\varepsilon)}} \log \Pi_{n,0} \xrightarrow[n \to \infty]{} -\infty .
\]
So, by the contraction principle, for all fixed $m \geqslant 0$,
\[
\frac{1}{N_n^{\alpha(1-\varepsilon)}} \log \Pi_{n,m}
 \xrightarrow[n \to \infty]{} \begin{cases}
-\infty & \text{if $m \leqslant y/c-1$} \\
-q\left(\floor{y/c} c^{1-\varepsilon} + (y - \floor{y/c} c)^{1-\varepsilon} \right) & \text{otherwise,}
\end{cases}
\]
that provides a minoration of the sum of the  $\Pi_{n,m}$'s. To obtain a majoration, let us introduce $m_n=\lceil N_n^{\alpha (1-\varepsilon)^2} 2k\rceil$ where $k=\floor{y/c} c^{1-\varepsilon} + (y - \floor{y/c} c)^{1-\varepsilon}$.
Lemma \ref{lem:pinm_gaussian} remains unchanged while Lemmas \ref{lem:I1m_gaussian} and \ref{lem:I2m} requires adjustments. The integration domains defining $I_{1,m}$ and $I_{2,m}$ become
\begin{align*}
A_{1,m}&=\enstq{(u_1,\dots,u_m)\in\intervalleff{(N_n^{\alpha}c)^\varepsilon}{N_n^\alpha c+2}^m}{\sum_{i=1}^m u_i \geqslant N_n^\alpha y},\\
A_{2,m}&=\enstq{(u_1,\dots,u_m)\in\intervalleff{(N_n^{\alpha}c)^\varepsilon}{N_n^\alpha c+2}^m}{\sum_{i=1}^m u_i < N_n^\alpha y},
\end{align*}
Further, the concave function $s_m$ attains its minimum at points with all coordinates equal to $(N_n^{\alpha}c)^\varepsilon$ except $\lfloor y/c_n \rfloor$ coordinates equal to $N_n^\alpha c_n$ and one coordinate equal to  $N_n^\alpha\left( y-\lfloor y/c_n \rfloor c_n\right) -(N_n^{\alpha}c)^\varepsilon \left(m-1-\lfloor y/c_n \rfloor \right)$ with $c_n=c+2N_n^{-\alpha}$. Then following the same lines as in the proof of Lemmas \ref{lem:I1m_gaussian} and \ref{lem:I2m}, we get, for $j\in\{1,2\}$,
\begin{align*}
&\lim_{n\to \infty} \frac{1}{N_n^{2\alpha-1}}\log \sum_{m=1}^{m_n} \binom{N_n}{m}I_{j,m}=-q\left(\floor{y/c} c^{1-\varepsilon} + (y - \floor{y/c} c)^{1-\varepsilon} \right).
\end{align*}

\paragraph{Truncated maximal jump range}
When $\beta < \alpha < \beta + 1$ and $y \geqslant 0$, or $\alpha = \beta+1$ and $y < c$, the proof of Theorem \ref{thm:nagaev_weak_array_mob_eps} adapts and provides
\[
\lim_{n \to \infty} \frac{1}{N_n^{\alpha-\beta\varepsilon}} \log \Prob(\rva{T}{n} \geqslant N_n^{\alpha} y) = - q y c^{-\varepsilon} .
\]
As in the previous case, we use the decomposition given by \eqref{eq:decomp3} and \eqref{eq:decomp4}. To upper bound $P_{n,0}$, we write
\[
P_{n,0} \leqslant e^{- qy c^{-\varepsilon} N_n^{\alpha-\beta\varepsilon}} \Espe\left[ e^{y c^{-\varepsilon} N_n^{-\beta\varepsilon} \rva{Y}{n}} \bigl(\indic_{\rva{Y}{n} < (N_n^\beta c)^\varepsilon} + \indic_{(N_n^\beta c)^\varepsilon \leqslant \rva{Y}{n} <  N_n^\beta c}\bigr) \right]^{N_n}
\]
and follow the same lines as in the proof of Theorem \ref{thm:nagaev_weak_array_mob_eps}. To lower bound $P_{n,0}$, we write, for $c' < c$,
\begin{align*}
\log P_{n,0}
 & \geqslant \log \Prob(\forall i \in \llbracket 1, \lceil N_n^{\alpha-\beta} y/c' \rceil \rrbracket \quad \rva[i]{Y}{n} \geqslant N_n^\beta c') \\
 & \sim - N_n^{\alpha-\beta} y (c')^{-1} q (N_n^\beta c' )^{1-\varepsilon} \\
 & = - N_n^{\alpha-\beta\varepsilon} q y (c')^{-\varepsilon} ,
\end{align*}
and we recover the upper bound, when $c' \to c$.

\paragraph{Trivial case}
When $\alpha=\beta+1$ and $y \geqslant c$, or $\alpha > \beta+1$, 
we obviously have $\Prob(\rva{T}{n} \geqslant N_n^\alpha y) = 0$.

\subsection{Case \texorpdfstring{$\beta < (1+\varepsilon)^{-1}$}{f}}

Here, Theorem \ref{thm:nagaev_weak_array_mob_eps2} applies for $\alpha < (1+\varepsilon)^{-1}$. The notable fact is that the Gaussian range is extended: it spreads until $\alpha < 1-\beta\varepsilon$. 

\paragraph{Gaussian range}
When $\alpha < 1-\beta\varepsilon$, the proof of Theorem \ref{thm:nagaev_weak_array_mob_eps2} adapts and, for all $y \geqslant 0$,
\[
\lim_{n \to \infty} \frac{1}{N_n^{2\alpha-1}} \log \Prob(\rva{T}{n} \geqslant N_n^{\alpha} y) = - \frac{y^2}{2\sigma^2} .
\]
As we said, the result for $\alpha < (1+\varepsilon)^{-1}$ is a consequence of Theorem \ref{thm:nagaev_weak_array_mob_eps2}. Now, suppose $\alpha \geqslant (1+\varepsilon)^{-1} > \beta$. We use the decomposition given by \eqref{eq:decomp3} and \eqref{eq:decomp4}.
Lemma \ref{lem:pi0} works for $\alpha < 1-\beta\varepsilon$, with $\mathcal{L}(\rva{\Yinf}{n}) = \mathcal{L}(\rva{Y}{n}\ |\ \rva{Y}{n} < (N_n^\beta c)^\varepsilon)$. Then, we choose $m_n = \lceil  N_n^{\alpha-2\beta\varepsilon+\beta\varepsilon^2} 2 y c^{-\varepsilon} \rceil$. We obtain the equivalent of Lemma \ref{lem:pinm_gaussian}:
\[
\limsup_{n \to \infty} \frac{1}{N_n^{\alpha-\beta\varepsilon}} \log \sum_{m=m_n+1}^{N_n} \binom{N_n}{m} \Pi_{n,m}
 \leqslant - q' y c^{-\varepsilon} .
\]
with $N_n^{\alpha-\beta\varepsilon} \gg N_n^{2\alpha-1}$. Finally, Lemmas \ref{lem:I1m_gaussian} and \ref{lem:I2m} adapt as well, with
\begin{align*}
A_{1,m}&=\enstq{(u_1,\dots,u_m)\in\intervalleff{(N_n^\beta c)^{\varepsilon}}{N_n^\beta c+2}^m}{\sum_{i=1}^m u_i \geqslant N_n^{\alpha} y },\\
A_{2,m}&=\enstq{(u_1,\dots,u_m)\in\intervalleff{(N_n^\beta c)^{\varepsilon}}{N_n^\beta c+2}^m}{\sum_{i=1}^m u_i <  N_n^{\alpha} y} .
\end{align*}


\paragraph{Transition 3}
When $\alpha = 1-\beta\varepsilon$, the proof of Theorem \ref{thm:nagaev_weak_array_mob_eps_intermediate} adapts and, for all $y \geqslant 0$,
\begin{align*}
\lim_{n \to \infty} \frac{1}{N_n^{1-2\beta\varepsilon}} \log \Prob(\rva{T}{n} \geqslant N_n^{1-\beta\varepsilon} y)
 = -I_3(y) &\defeq - \inf_{0 \leqslant t \leqslant 1} \Bigl\{ q(1-t) y c^{-\varepsilon} + \frac{t^2 y^2}{2\sigma^2} \Bigr\} \\
 & = - \begin{cases}
\frac{y^2}{2\sigma^2} & \text{if $y \leqslant y_3$} \\
\frac{qy}{c^\varepsilon} - \frac{q^2\sigma^2}{2c^{2\varepsilon}} & \text{if $y > y_3$}
\end{cases}
\end{align*}
with $y_3\defeq q\sigma^2 c^{-\varepsilon}$.

\paragraph{Truncated maximal jump range}
When $1-\beta\varepsilon < \alpha < 1+\beta$ and $y \geqslant 0$, or $\alpha = 1+\beta$ and $y < c$, as before, the proof of Theorem \ref{thm:nagaev_weak_array_mob_eps} adapts and
\[
\lim_{n \to \infty} \frac{1}{N_n^{\alpha-\beta\varepsilon}} \log \Prob(\rva{T}{n} \geqslant N_n^{\alpha} y) = - q y c^{-\varepsilon} .
\]

\paragraph{Trivial case}
When $\alpha=\beta+1$ and $y \geqslant c$, or $\alpha > \beta+1$, 
we obviously have $\Prob(\rva{T}{n} \geqslant N_n^\alpha y) = 0$.

\subsection{Case \texorpdfstring{$\beta = (1+\varepsilon)^{-1}$}{f}}

\paragraph{Gaussian range}
When $\alpha < (1+\varepsilon)^{-1} = \beta$, Theorem \ref{thm:nagaev_weak_array_mob_eps2} applies and, for all $y \geqslant 0$,
\[
\lim_{n \to \infty} \frac{1}{N_n^{2\alpha-1}} \log \Prob(\rva{T}{n} \geqslant N_n^{\alpha} y) = - \frac{y^2}{2\sigma^2} .
\]

\paragraph{Transition $\mathbf{T_0}$}
As in Section \ref{sec:main} after the statement of Theorem \ref{thm:nagaev_weak_array_mob_eps_intermediate}, we define $\theta(y)$ and $y_1$ for the function $f(\theta)=q\theta^{1-\varepsilon} y^{1-\varepsilon}+{(1-\theta)^2y^2/}{(2\sigma^2)}$. Define $\tilde{\theta}(y) \defeq \indic_{y \geqslant y_1} \theta(y)$ and notice that $\tilde{\theta}$ is increasing on $\intervallefo{y_1}{\infty}$ (and $\tilde{\theta}(y) \to 1$ as $y \to \infty$). Set $c_0 \defeq \tilde{\theta}(y_1)y_1 = (2\varepsilon q \sigma^2)^{1/(1+\varepsilon)}$.

\hspace{0.5cm} \textbullet{} When $\alpha = (1+\varepsilon)^{-1} = \beta$ and $c \leqslant c_0$, then
\[
\lim_{n \to \infty} \frac{1}{N_n^{(1-\varepsilon)/(1+\varepsilon)}} \log \Prob(\rva{T}{n} \geqslant N_n^{\alpha} y) = - q k_{0,1}(y) c^{1-\varepsilon} + \frac{(y - k_{0,1}(c)c)^2}{2 \sigma^2} \eqdef -I_{0,1}(y)
\]
where
\[
k_{0,1}(y) \defeq \max\left( \floor{\frac{y-y_{0,1}(c)}{c}} + 1, 0 \right)
\quad \text{and} \quad
y_{0,1}(c) \defeq \frac{c}{2} + q \sigma^2 c^{-\varepsilon}
\]
($y_{0,1}(c)$ is the unique solution in $y$ of $y_{0,1}^2-(y_{0,1}-c)^2 = 2\sigma^2qc^{1-\varepsilon}$).

\hspace{0.5cm} \textbullet{} When $\alpha = (1+\varepsilon)^{-1} = \beta$ and $c \geqslant c_0$, then
\[
\lim_{n \to \infty} \frac{1}{N_n^{(1-\varepsilon)/(1+\varepsilon)}} \log \Prob(\rva{T}{n} \geqslant N_n^{\alpha} y) = - q k_{0,2}(y) c^{1-\varepsilon} + I(y - k_{0,2}(c)c) \eqdef  -I_{0,2}(y)
\]
where
\[
k_{0,2}(y) \defeq \max\left( \floor{\frac{y-y_{0,2}(c)}{c}} + 1, 0 \right)
\quad \text{and} \quad
y_{0,2}(c) \defeq c + (1-\varepsilon)q\sigma^2c^{-\varepsilon}
\]
($y_{0,2}(c)$ is the unique solution in $y$ of $\tilde{\theta}(y)y = c$).

\medskip

Remark: For all $c < c_0$, $y_{0,1}(c) > y_1$: so the Gaussian range in the nontruncated case (which stops at $y_1$) is extended. Moreover, $y_{0,1}(c_0) = y_1 = y_{0,2}(c_0)$, and, for $c = c_0$, $I_{0,1} = I_{0,2}$ (since $I_1(y) = y^2/(2\sigma^2)$ for $y \leqslant y_1$).

\paragraph{Truncated maximal jump range}
When $(1+\varepsilon)^{-1} = \beta < \alpha < \beta+1$ and $y \geqslant 0$, or $\alpha = 1+\beta$ and $y < c$, as before, the proof of Theorem \ref{thm:nagaev_weak_array_mob_eps} adapts and
\[
\lim_{n \to \infty} \frac{1}{N_n^{\alpha-\beta\varepsilon}} \log \Prob(\rva{T}{n} \geqslant N_n^{\alpha} y) = - q y c^{-\varepsilon} .
\]

\paragraph{Trivial case}
When $\alpha=\beta+1$ and $y \geqslant c$, or $\alpha > \beta+1$, 
we obviously have $\Prob(\rva{T}{n} \geqslant N_n^\alpha y) = 0$.

\begin{center}
\begin{figure}
\includegraphics[scale=0.29]{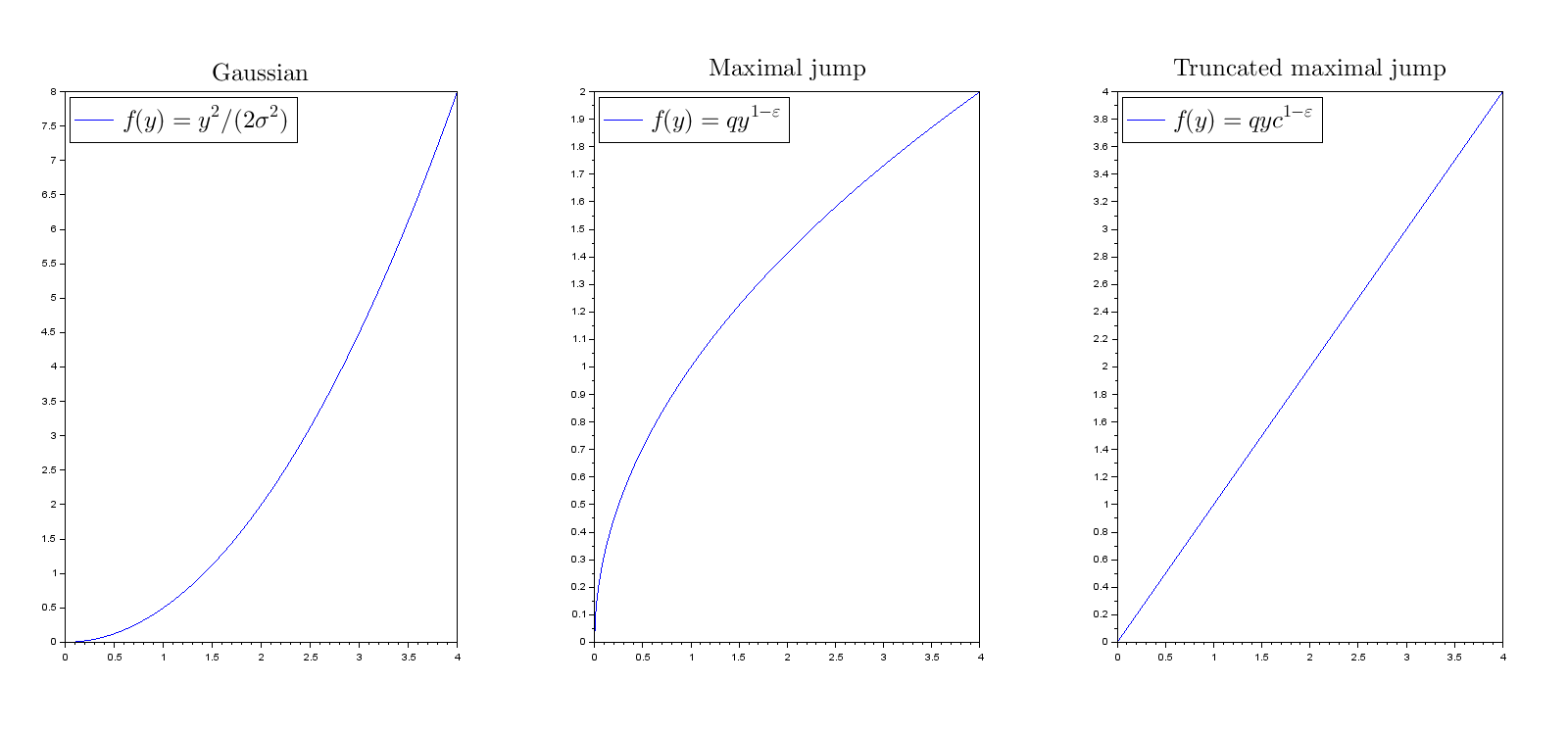}
\caption{\label{fig:fonction_taux_1} Representation of the rate functions. Here, $q=1$, $\sigma^2=2$, $\epsilon =1/2$, and $c=1$.  
Left - Gaussian range. The typical event corresponds to the case where all the random variables are small but their sum has a Gaussian contribution.
Center - Maximal jump range. The typical event corresponds to the case where one random variable contributes to the total sum ($N_n^\alpha y$), no matter the others. We recover the random variable tail.
Right - Truncated maximal jump range. The typical event corresponds to the case where $N_n^{\alpha-\beta}y/c$ variables take the saturation value $N_n^\beta c$, no matter the others.}
\end{figure}
\end{center}

\begin{center}
\begin{figure}
\includegraphics[scale=0.29]{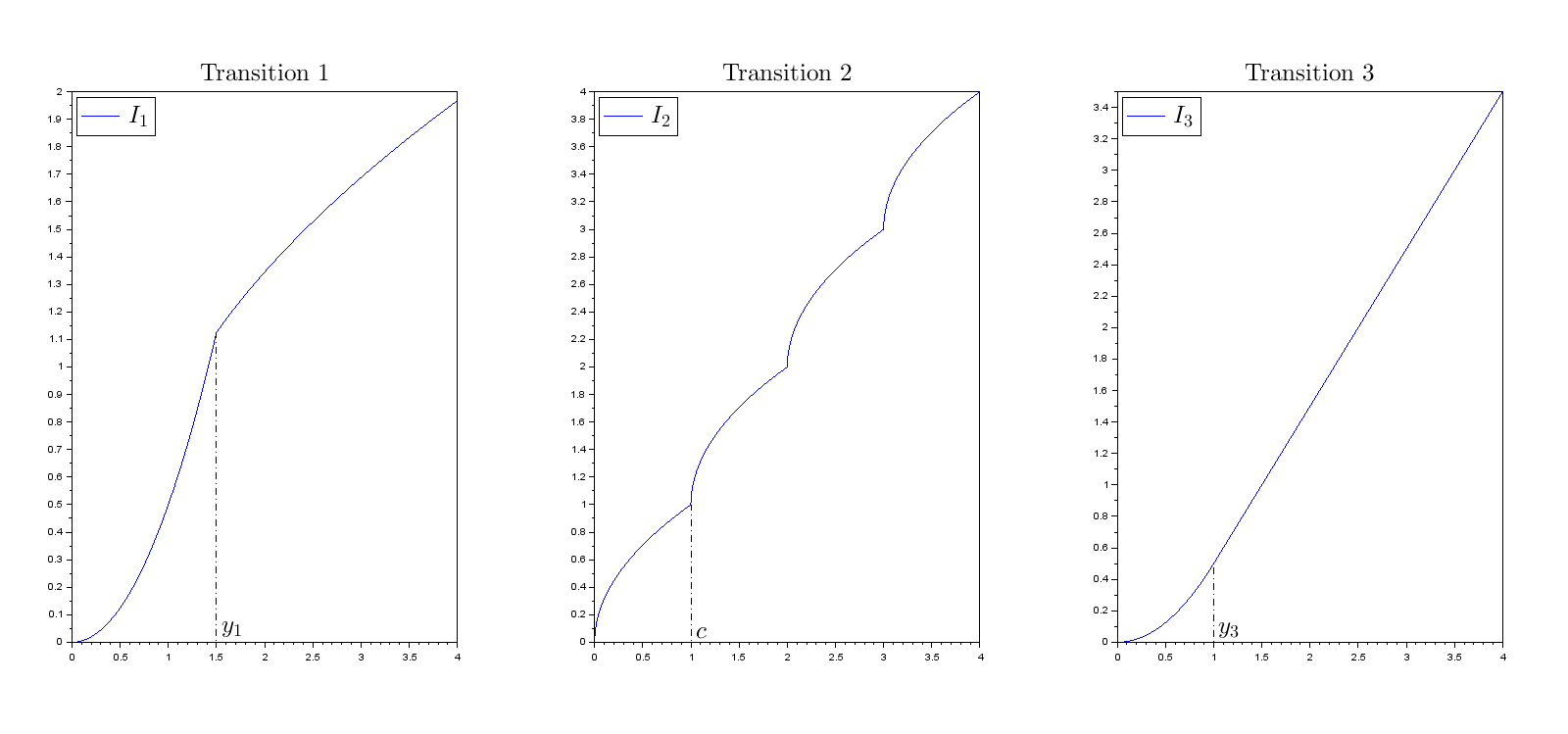}
\caption{\label{fig:fonction_taux_2} Representation of the rate functions. Here, $q=1$, $\sigma^2=2$, $\epsilon =1/2$, and $c=1$.  
Left - Transition 1. The typical event corresponds to the case where one random variable is large ($N_n^\alpha \theta(y) y$) and the sum of the others has a Gaussian contribution (two competing terms).
Center - Transition 2. The typical event corresponds to the case where $\floor{y/c}$ random variables take the saturation value $N_n^\beta c$ and one completes to get the total sum.
Right - Transition 3. The typical event corresponds to the case where some random variables (a number of order $N_n^{1-\beta(1+\varepsilon)}$) take the saturation value $N_n^\beta c$, and the sum of the others has a Gaussian contribution (two competing terms).}
\end{figure}
\end{center}

\begin{center}
\begin{figure}
\includegraphics[scale=0.29]{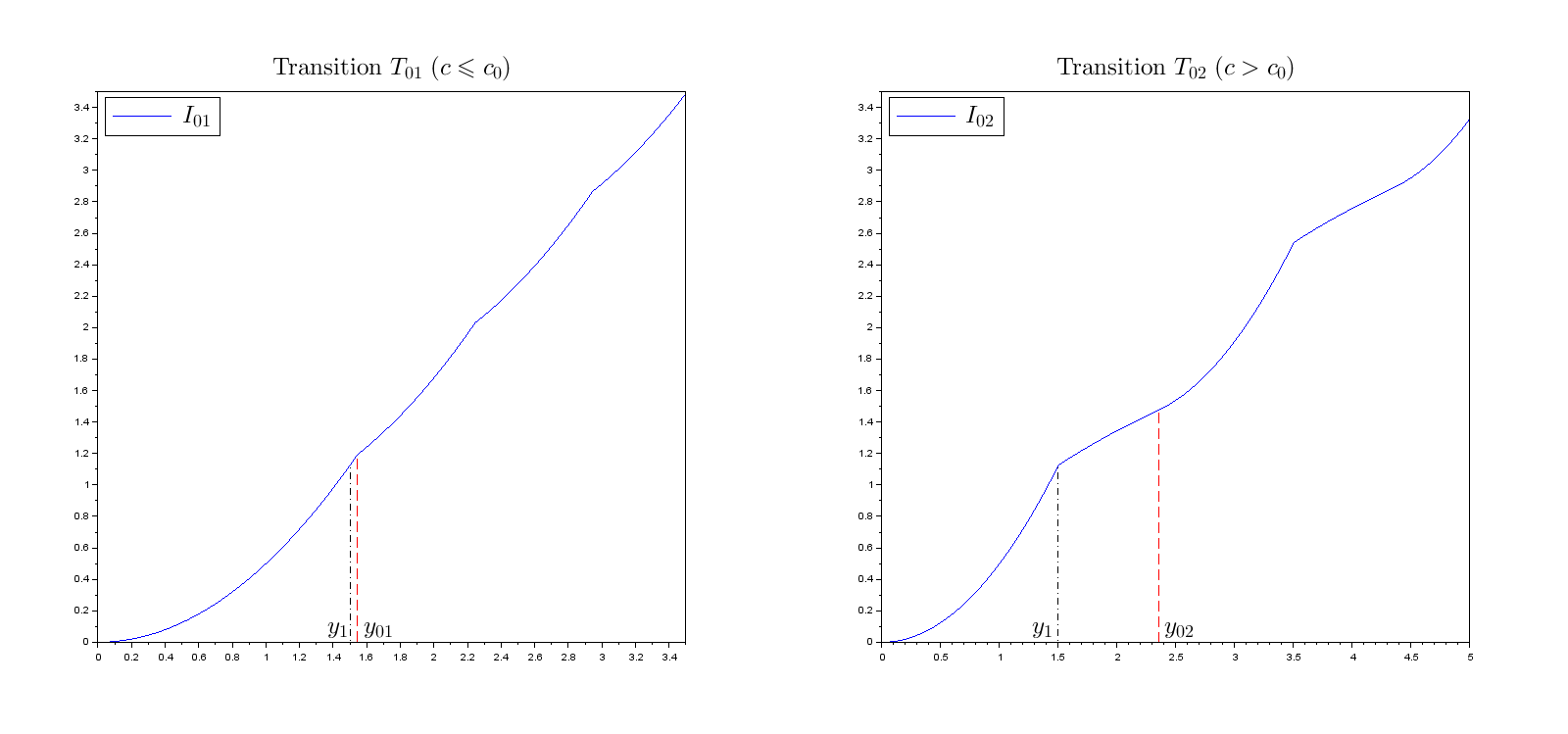}
\caption{\label{fig:fonction_taux_3} Representation of the rate functions. Here, $q=1$, $\sigma^2=2$, $\epsilon =1/2$, and $c=1$.  
Left - Transition 1- for $c \leqslant c_0$. The typical event corresponds to the case where $k_3(c)$ variables take the saturation value $N^\beta c$, and the sum of the others has a Gaussian contribution.
Right - Transition 1- for $c \geqslant c_0$. The typical event corresponds to the case where $k_2(c)$ variables take the saturation value $N^\beta c$, one is also large ($N_n^\beta \theta(y-k_2(c)c) (y-k_2(c)c)$) and the sum of the others has a Gaussian contribution.}
\end{figure}
\end{center}

\begin{center}
\begin{figure}
\begin{tikzpicture}[scale=1.5]
\fill[color=gray](0,2)--(9,2)--(9,8/3)--(8/3,8/3)--(0,4)--cycle;
\fill[color=gray!50](9,8/3)--(9,9)--(8/3,8/3)--cycle;
\fill[color=gray!0](9,9)--(8/3,8/3)--(0,4)--(2,9)--cycle;

\pattern[pattern=north west hatch, hatch distance=3mm, hatch thickness=.5pt] (0,4)--(0,9)--(5,9)--cycle;

\node[draw,fill,color=white,text width=1.2cm,text height=0.7cm,rotate=45]at(1.4,7.2){};

\draw[line width=2pt] (8/3,8/3)--(9,9);
\draw[line width=2pt] (0,4)--(8/3,8/3)--(9,8/3);
\draw[line width=2pt] (0,4)--(5,9);
\draw[line width=1pt,dashed] (0,8/3)--(8/3,8/3);
\draw[line width=1pt,dashed] (8/3,2)--(8/3,8/3);
\draw[line width=2pt,->](0,2)--(9.2,2);
\draw[line width=2pt,->](0,2)--(0.0,9.2);
\node(a)at(9,1.5){$\beta$};
\node(aa)at(0,1.7){$0$};

\node(b)at(-0.5,9){$\alpha$};
\node(bb)at(-0.5,2){\footnotesize{$1/2$}};

\node(d)at(8/3+0.2,1.5){\footnotesize{$(1+\varepsilon)^{-1}$}};
\node(e)at(-0.8,8/3){\footnotesize{$(1+\varepsilon)^{-1}$}};

\node(f)at(-0.5,4){$1$};

\node[rotate=45]at(8.5,8.2){\footnotesize{$\alpha=\beta$}};
\node[rotate=45]at(4.5,8.2){\footnotesize{$\alpha=\beta+1$}};
\node at(8.2,2.5){\footnotesize{$\alpha=(1+\varepsilon)^{-1}$}};
\node[rotate=-27]at(1.2,3.2){\footnotesize{$\alpha=1-\beta\varepsilon$}};

\node(g)at(1.4,2.30){\footnotesize{Gaussian}};
\node(gg)at(6,2.30){\footnotesize{$y^2/(2\sigma^2)$}};
\node(h)[rotate=45]at(1.6,7.1){\footnotesize{$\infty$}};
\node(hh)[rotate=45]at(1.3,7.3){\footnotesize{Trivial}};
\node(i)[rotate=45]at(6.8,4.5){\footnotesize{$qy^{1-\epsilon}$}};
\node(ii)[rotate=45]at(6.6,4.8){\footnotesize{Maximal Jump}};
\node(f)[rotate=45]at(4.8,6.5){\footnotesize{$qyc^{-\epsilon}$}};
\node(ff)[rotate=45]at(4.5,6.6){\footnotesize{Truncated Maximal Jump}};
\draw[line width=1pt,->](8/3,3.4)--(8/3,2.8);
\node(f)at(8/3,3.7){$T_{0}$};
\node(g)at(5.2,8/3+0.2){\footnotesize{Transition 1 \hspace{1cm} $I_1(y)$}};
\node(h)[rotate=45]at(5.5,5.8){\footnotesize{Transition 2 \hspace{0.5cm} $I_2(y)$}};
\node(j)[rotate=-27]at(1.4,3.5){\footnotesize{Transition 3} \hspace{0.2cm} $I_3(y)$};
\end{tikzpicture}
\caption{\label{graph:alpha_beta} Rate function transition diagram.}
\end{figure}
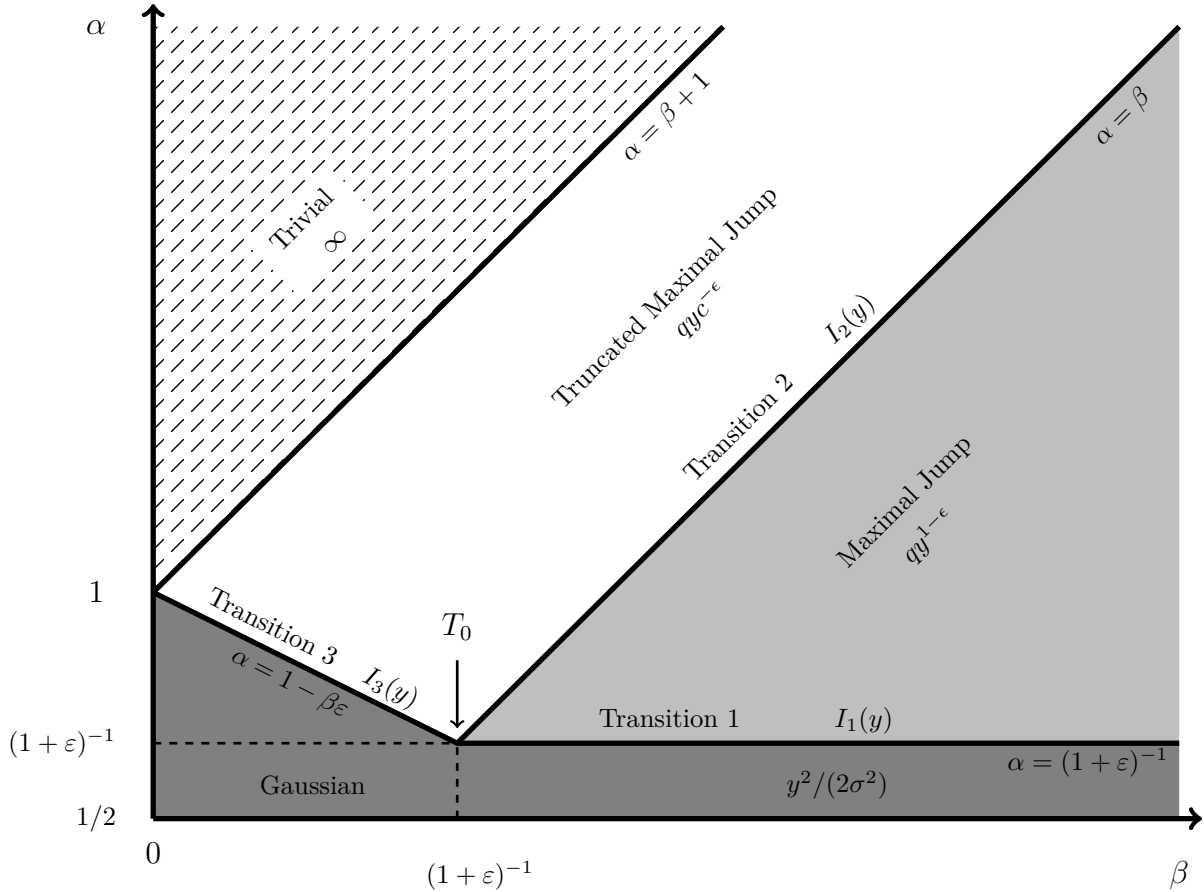
\end{center}

\newpage

\bibliographystyle{abbrv}
\bibliography{biblio_gde_dev}

\begin{thebibliography}{10}

\bibitem{billingsley2013convergence}
P.~Billingsley.
\newblock {\em Convergence of probability measures}.
\newblock John Wiley \& Sons, 2013.

\bibitem{borovkov2000large}
A.~A. Borovkov.
\newblock Large deviation probabilities for random walks with semiexponential
  distributions.
\newblock {\em Siberian Mathematical Journal}, 41(6):1290--1324, 2000.

\bibitem{borovkov2008asymptotic}
A.~A. Borovkov.
\newblock {\em Asymptotic analysis of random walks}, volume 118.
\newblock Cambridge University Press, 2008.

\bibitem{FATP2020}
F.~Brosset, T.~Klein, A.~Lagnoux, and P.~Petit.
\newblock {Probabilistic proofs of large deviation results for sums of
  semiexponential random variables and explicit rate function at the
  transition}.
\newblock working paper or preprint, July 2020.

\bibitem{Cramer38}
H.~Cram\'er.
\newblock Sur un nouveau théorème-limite de la théorie des probabilités.
\newblock {\em Actualités Sci. Ind.}, (736), 1938.

\bibitem{DZ98}
A.~Dembo and O.~Zeitouni.
\newblock {\em Large deviations techniques and applications}, volume~38 of {\em
  Applications of Mathematics (New York)}.
\newblock Springer-Verlag, New York, second edition, 1998.

\bibitem{Feller43}
W.~Feller.
\newblock Generalization of a probability limit theorem of {C}ram\'{e}r.
\newblock {\em Trans. Amer. Math. Soc.}, 54:361--372, 1943.

\bibitem{gut1992complete}
A.~Gut.
\newblock Complete convergence for arrays.
\newblock {\em Periodica Mathematica Hungarica}, 25(1):51--75, 1992.

\bibitem{GUT199249}
A.~Gut.
\newblock The weak law of large numbers for arrays.
\newblock {\em Statistics \& Probability Letters}, 14(1):49 -- 52, 1992.

\bibitem{hu1989strong}
T.-C. Hu, F.~Moricz, and R.~Taylor.
\newblock Strong laws of large numbers for arrays of rowwise independent random
  variables.
\newblock {\em Acta Mathematica Hungarica}, 54(1-2):153--162, 1989.

\bibitem{Janson01a}
S.~Janson.
\newblock Asymptotic distribution for the cost of linear probing hashing.
\newblock {\em Random Structures Algorithms}, 19(3-4):438--471, 2001.
\newblock Analysis of algorithms (Krynica Morska, 2000).

\bibitem{Kinchin29}
A.~Kinchin.
\newblock {\"U}ber einer neuen {G}renzwertsatz der
  {W}ahrscheinlichkeitsrechnung.
\newblock {\em Math. Ann.}, 101:745--752, 1929.

\bibitem{ATP2020hashing}
T.~Klein, A.~Lagnoux, and P.~Petit.
\newblock Deviation results for hashing with linear probing.
\newblock Preprint, 2020.

\bibitem{Linnik61}
J.~V. Linnik.
\newblock On the probability of large deviations for the sums of independent
  variables.
\newblock In {\em Proc. 4th {B}erkeley {S}ympos. {M}ath. {S}tatist. and
  {P}rob., {V}ol. {II}}, pages 289--306. Univ. California Press, Berkeley,
  Calif., 1961.

\bibitem{Mikosch98}
T.~Mikosch and A.~V. Nagaev.
\newblock Large deviations of heavy-tailed sums with applications in insurance.
\newblock {\em Extremes}, 1(1):81--110, 1998.

\bibitem{Nagaev69-1}
A.~Nagaev.
\newblock {Integral Limit Theorems Taking Large Deviations into Account when
  Cram{\'e}r's Condition Does Not Hold. I}.
\newblock {\em Theory of Probability and Its Applications}, 14(1):51--64, 1969.

\bibitem{Nagaev69-2}
A.~Nagaev.
\newblock {Integral Limit Theorems Taking Large Deviations Into Account When
  Cram{\'e}r's Condition Does Not Hold. II}.
\newblock {\em Theory of Probability and Its Applications}, 14(2):193--208,
  1969.

\bibitem{Nagaev62}
S.~V. Nagaev.
\newblock An integral limit theorem for large deviations.
\newblock {\em Izv. Akad. Nauk UzSSR Ser. Fiz.-Mat. Nauk}, 1962(6):37--43,
  1962.

\bibitem{Nagaev79}
S.~V. Nagaev.
\newblock Large deviations of sums of independent random variables.
\newblock {\em The Annals of Probability}, pages 745--789, 1979.

\bibitem{Petrov54}
V.~V. Petrov.
\newblock Generalization of {C}ram\'{e}r's limit theorem.
\newblock {\em Uspehi Matem. Nauk (N.S.)}, 9(4(62)):195--202, 1954.

\bibitem{petrov2008large}
V.~V. Petrov and J.~Robinson.
\newblock Large deviations for sums of independent non identically distributed
  random variables.
\newblock {\em Communications in Statistics—Theory and Methods},
  37(18):2984--2990, 2008.

\bibitem{PS75}
D.~Plachky and J.~Steinebach.
\newblock A theorem about probabilities of large deviations with an application
  to queuing theory.
\newblock {\em Period. Math. Hungar.}, 6(4):343--345, 1975.

\bibitem{Smirnov33}
N.~V. Smirnov.
\newblock On the probabilities of large deviations.
\newblock {\em Mat. Sb.}, 40:443--454, 1933.

\end{thebibliography}

\end{document}